\newtheorem{thm}{Theorem}
\newtheorem{lem}[thm]{Lemma}
\newtheorem{cor}[thm]{Corollary}
\newtheorem{defn}{Definition}
\newtheorem{claim}[thm]{Claim}
\newtheorem{problem}{Problem}
\newcommand{\myvert}{ellipse (.1cm and .1cm)}
\newcommand{\wtilpi}{\widetilde{\pi}}
\newcommand{\s}[1]{^{(#1)}}
\DeclareMathOperator{\ex}{ex}
\DeclareMathOperator{\dist}{dist}
\newcommand{\CL}[1]{\left\lceil#1\right\rceil}
\newcommand{\fl}[1]{\lfloor#1\rfloor}
\newcommand{\FL}[1]{\left\lfloor#1\right\rfloor}
\DeclareFontFamily{U}{mathx}{\hyphenchar\font45}
\DeclareFontShape{U}{mathx}{m}{n}{
      <5> <6> <7> <8> <9> <10>
      <10.95> <12> <14.4> <17.28> <20.74> <24.88>
      mathx10
      }{}
\DeclareSymbolFont{mathx}{U}{mathx}{m}{n}
\DeclareMathAccent{\widecheck}{0}{mathx}{"71}
\DeclareMathAccent{\wideparen}{0}{mathx}{"75}
\author[Erbes, Ferrara, Martin, Wenger]{Catherine Erbes$^{1,5}$\and Michael Ferrara$^{2,6}$\and Ryan R. Martin$^{3,6,7}$ \and Paul Wenger$^4$}
\title{Stability of the potential function}
\date{\today}
\begin{document}
\footnotetext[1]{Department of Mathematics, Hiram College, {\tt erbescc@hiram.edu}}
\footnotetext[2]{Department of Mathematical and Statistical Sciences, University of Colorado Denver, {\tt michael.ferrara@ucdenver.edu}}
\footnotetext[3]{Department of Mathematics, Iowa State University, {\tt rymartin@iastate.edu}}
\footnotetext[4]{School of Mathematical Sciences, Rochester Institute of Technology, {\tt pswsma@rit.edu}}
\footnotetext[5]{Research supported in part by UCD GK12 Transforming Experiences Project, National Science Foundation Grant DGE-0742434.}
\footnotetext[6]{Research supported in part by Simons Foundation Collaboration Grants \#206692 (to M. Ferrara) and \#353292 (to R. R. Martin).}
\footnotetext[7]{Research supported in part by National Science Foundation grant DMS-0901008 and National Security Agency grant H98230-13-1-0226.}

\topmargin 0.4in

\begin{abstract}
A graphic sequence $\pi$ is \textit{potentially $H$-graphic} if there is some realization of $\pi$ that contains $H$ as a subgraph.  The Erd\H os-Jacobson-Lehel problem asks to determine $\sigma(H,n)$, the minimum even integer such that any $n$-term graphic sequence $\pi$ with sum at least $\sigma(H,n)$ is potentially $H$-graphic.  The parameter $\sigma(H,n)$ is known as the \textit{potential function} of $H$, and can be viewed as a degree sequence variant of the classical extremal function $\ex(n,H)$.  Recently, Ferrara, LeSaulnier, Moffatt and Wenger [On the sum necessary to ensure that a degree sequence is potentially $H$-graphic, \textit{Combinatorica} \textbf{36} (2016), 687--702] determined $\sigma(H,n)$ asymptotically for all $H$, which is analogous to the Erd\H{o}s-Stone-Simonovits Theorem that determines $\ex(n,H)$ asymptotically for nonbipartite $H$.  

In this paper, we investigate a stability concept for the potential number, inspired by Simonovits' classical result on the stability of the extremal function.  We first define a notion of stability for the potential number that is a natural analogue to the stability given by Simonovits.  However, under this definition, many families of graphs are not $\sigma$-stable, establishing a stark contrast between the extremal and potential functions.  We then give a sufficient condition for a graph $H$ to be stable with respect to the potential function, and characterize the stability of those graphs $H$ that contain an induced subgraph of order $\alpha(H)+1$ with exactly one edge.  
\end{abstract}

\maketitle

\section{Introduction}

The \textit{degree sequence} of a graph $G$ is the list of degrees of the vertices in $G$,  and a finite sequence of nonnegative integers $\pi=(d_1,\ldots,d_n)$ is {\it graphic} if it is the degree sequence of some graph $G$. In this case, we call $G$ a {\it realization} of $\pi$ or say that $G$ {\it realizes} $\pi$. Throughout this paper, we assume that all graphic sequences are nonincreasing.  Classical theorems, most notably the Havel-Hakimi algorithm \cite{hak, hav} and the Erd\H{o}s-Gallai criteria \cite{ErdGal}, give efficient characterizations of graphic sequences.  However, a given graphic sequence may have a number of nonisomorphic realizations that exhibit a breadth of properties. The general problem of exploring the properties appearing throughout the space of realizations of a graphic sequence is at the heart of this paper, and has been central to a number of investigations throughout the literature.  

Given a graph property ${\mathcal F}$, we say that a graphic sequence $\pi$ is \textit{potentially ${\mathcal F}$-graphic} if at least one realization of $\pi$ has property ${\mathcal F}$. It is possible to develop analogues to many traditional graph theoretic results and conjectures within the setting of potentially ${\mathcal F}$-graphic sequences.  Potentially-$\mathcal{F}$-graphic variants of Hadwiger's Conjecture \cite{Chen,DvoMoh}, the Graph Minor Theorem \cite{ChuSey,SRao2}, the Erd\H os-S\'os Conjecture \cite{YinLi2}, the Bollob\'as-Scott Conjecture on bisections \cite{HarSea}, and graph packing~\cite{BFHJKW, DFJS} have all recently been studied. 

Let $H$ be a graph.  In this paper, we study a graphic sequence variant of the classical Tur\'an problem, which asks for the maximum number of edges in an $n$-vertex graph that does not contain $H$ as a subgraph; this number is called the {\it extremal number} of $H$ and is denoted $\ex(n,H)$.  A graphic sequence $\pi$ is {\it potentially $H$-graphic} if there is a realization of $\pi$ that contains $H$ as a subgraph.  In~\cite{EJL}, Erd\H os, Jacobson, and Lehel posed the following general problem.
Let $\sigma(\pi)$ denote the sum of the terms of the degree sequence $\pi$.
\begin{problem}[The Erd\H os-Jacobson-Lehel Problem] Determine the minimum even integer $\sigma(H,n)$ such that any graphic sequence $\pi$ of length $n$ with $\sigma(\pi) \geq \sigma(H,n)$ is potentially $H$-graphic. \end{problem}
\noindent We call $\sigma(H,n)$ the {\it potential number} or the {\it potential function} of $H$.

The focus of this paper is the presentation of stability results for the potential function that parallel classical stability theorems for the extremal function.
In Section~\ref{sec:background} we outline motivating theorems from the stability literature on the Tur\' an problem.
In Section~\ref{sec:main} we precisely define the notion of stability for the Erd\H os-Jacobson-Lehel problem and state our main results.
In Section~\ref{sec:tech} we provide some technical lemmas and in Section~\ref{sec:proofs} we prove our main results.
In Section~\ref{sec:notstable}, we introduce a slightly weaker notion of stability that captures still more graphs, and we conclude with a discussion of possible future work.

Throughout the paper we use standard graph theoretic notation.
For a graph $G$ we let $V(G)$ and $E(G)$ denote its vertex set and edge set, respectively.
The maximum size of an independent set in $G$ is denoted by $\alpha(G)$, and  the maximum degree of $G$ is denoted by $\Delta(G)$.
If $X\subseteq V(G)$, then $G[X]$ is the subgraph of $G$ induced by $X$.
If $G$ and $H$ are graphs, then the {\it join} of $G$ and $H$, denoted $G\vee H$, is the graph obtained from the disjoint union of $G$ and $H$ by adding all possible edges joining vertices in $G$ to vertices in $H$.

\section{Background}\label{sec:background}
Since the Tur\'an problem and the related stability result proved by Erd\H{o}s and Simonovits are a major motivation for this work, we discuss them briefly. 
In~\cite{Turan}, Tur\'an determined $\ex(n,K_r)$ for all $r$, extending an earlier result of Mantel and his students for $r=3$~\cite{Mantel}.
Furthermore, Tur\'an proved that the unique $n$-vertex $K_r$-free graph with $\ex(n,H)$ edges is the Tur\'an graph $T_{n,r}$, which is the complete $r$-partite graph with all partite sets having order $\CL{n/r}$ or $\FL{n/r}$. 
Later Erd\H os and Simonovits \cite{ESS} extended work of Erd\H os and Stone \cite{ES} to determine the extremal number for general $H$ asymptotically.
\begin{thm}[The Erd\H{o}s-Stone-Simonovits Theorem]
If $H$ is a graph with chromatic number $\chi(H)=r+1\ge 2$, then $$\ex(n,H)=|E(T_{n,r})| + o(n^2).$$
\end{thm}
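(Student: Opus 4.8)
The plan is to establish matching bounds up to an error of $o(n^2)$. For the lower bound, observe that the Tur\'an graph $T_{n,r}$ is $r$-partite, so $\chi(T_{n,r})\le r < r+1=\chi(H)$; since chromatic number is monotone under taking subgraphs, $T_{n,r}$ is $H$-free, and hence $\ex(n,H)\ge |E(T_{n,r})|$. A direct count gives $|E(T_{n,r})|=\left(1-\tfrac1r\right)\binom n2+O(n)$, so it remains to prove the matching upper bound, which I will phrase in terms of edge density: for every $\varepsilon>0$ there is $n_0$ such that every $n$-vertex graph with at least $\left(1-\tfrac1r+\varepsilon\right)\binom n2$ edges contains $H$ whenever $n\ge n_0$.

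The key reduction, due to Simonovits, replaces $H$ by a complete multipartite graph. Let $t=|V(H)|$ and let $K_{r+1}(t)$ denote the complete $(r+1)$-partite graph with all parts of size $t$. A proper $(r+1)$-coloring of $H$ splits $V(H)$ into $r+1$ independent sets, each of size at most $t$, so $H\subseteq K_{r+1}(t)$. Thus it suffices to show that a graph of density exceeding $1-\tfrac1r$ by a fixed constant contains $K_{r+1}(t)$ once $n$ is large; this is precisely the Erd\H os--Stone theorem, so the remaining work is to sketch its proof.

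I would prove Erd\H os--Stone by induction on $r$. The base case $r=1$ says that a graph with at least $\varepsilon n^2$ edges contains $K_{t,t}=K_2(t)$ for large $n$, which follows from a K\H ov\'ari--S\'os--Tur\'an-type argument: a positive fraction of the vertices have degree at least $\varepsilon n$, and by convexity some $t$-subset of $V(G)$ lies in the common neighborhood of $t$ such vertices. For the inductive step, let $G$ have $n$ vertices and at least $\left(1-\tfrac1r+\varepsilon\right)\binom n2$ edges. After deleting vertices of degree below $\left(1-\tfrac1r+\tfrac\varepsilon2\right)n$ we may assume a minimum degree of this order while retaining $\Theta(n)$ vertices. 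Because the $r$-partite density threshold is increasing in $r$, namely $1-\tfrac1r>1-\tfrac1{r-1}$, the induction hypothesis, applied with a much larger part size $T=T(r,t,\varepsilon)$, produces a copy $B$ of $K_r(T)$ with parts $V_1,\dots,V_r$. Let $W=V(G)\setminus B$. A counting argument using the minimum-degree condition shows that many vertices $w\in W$ are \emph{rich}, meaning $|N(w)\cap V_i|\ge s$ for every $i$, where $s\gg t$. Finally, a K\H ov\'ari--S\'os--Tur\'an/pigeonhole step over the $t$-subsets of the rich set yields $t$ of them whose common neighborhood in each $V_i$ still has size at least $t$; these $t$ vertices together with $t$ vertices from each common neighborhood induce $K_{r+1}(t)$.

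The hard part will be the quantitative bookkeeping in the inductive step: one must choose the blow-up size $T$ and the richness threshold $s$, as functions of $r$, $t$, and $\varepsilon$, large enough for the common-neighborhood extraction to succeed, while checking that the density and degree surplus $\varepsilon$ survives the cleaning step — the point being that the threshold $1-\tfrac1r$ is exactly right, so $\varepsilon$ stays fixed and only $n_0$ grows. An alternative that avoids the delicate counting is to invoke Szemer\'edi's regularity lemma: apply it to $G$, use Tur\'an's theorem to find a $K_{r+1}$ among the clusters of the dense reduced graph, and then apply the embedding/counting lemma to the corresponding regular pairs to extract $K_{r+1}(t)$, and hence $H$. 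This is conceptually cleaner but yields a much weaker bound on $n_0$.
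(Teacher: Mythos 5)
The paper offers no proof of this statement: it is quoted verbatim as the classical Erd\H{o}s--Stone--Simonovits theorem (with citations to Erd\H{o}s--Stone and Erd\H{o}s--Simonovits) purely as background motivation, so there is no argument of record to compare yours against. On its own merits, your outline is the standard textbook proof and is sound: the lower bound from the $r$-colorability of $T_{n,r}$, the reduction of $H$ to the blow-up $K_{r+1}(t)$ via a proper $(r+1)$-coloring, and the Erd\H{o}s--Stone step by induction on $r$ with the minimum-degree cleaning, the rich-vertex count, and a pigeonhole extraction. Two details deserve care in a full write-up. First, in the base case the $t$ high-degree vertices produced by the convexity/double-counting argument must be chosen disjoint from the common $t$-subset they dominate (a routine adjustment in K\H{o}v\'ari--S\'os--Tur\'an). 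Second, in the inductive step the pigeonhole is more naturally taken over the $\binom{T}{s}^r$ possible $r$-tuples of $s$-subsets (one $s$-subset of each part $V_i$ chosen inside the neighborhood of each rich vertex), with $s\ge t$, so that $t$ rich vertices sharing the same $r$-tuple yield $K_{r+1}(t)$; your phrasing ``over the $t$-subsets of the rich set'' conflates this with the base-case counting, though the intended argument is clear. Your regularity-lemma alternative is likewise standard and correct, at the cost of far worse bounds on $n_0$.
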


Subsequently, Erd\H{o}s \cite{Erdstab} and Simonovits \cite{Sim} independently proved the following result, which is sometimes referred to as the First Stability Theorem.
Given graphs $G$ and $G'$ on the same labeled vertex set, the {\it edit distance} between $G$ and $G'$, denoted $\dist(G,G')$, is $|E(G) \triangle E(G')|$, where $\triangle$ denotes the symmetric difference. %With this terminology, we restate Theorem~\ref{thm:stab}:

\begin{thm}[Erd\H{o}s \cite{Erdstab}, Simonovits \cite{Sim}]\label{thm:stab2}
Let $H$ be a graph with $\chi(H)=r+1$. For every $\epsilon >0$, there exists a $\delta > 0$ and an $n_{\epsilon}$ such that if $n > n_{\epsilon}$ and $G$ is an $n$-vertex $H$-free graph such that 
\[
|E(G)| \geq \ex(n,H)-\delta n^2,
\]
then $\dist(G, T_{n,r}) < \epsilon n^2$.
\end{thm}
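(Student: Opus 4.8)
The plan is to prove the contrapositive: assuming that $G$ is an $n$-vertex $H$-free graph with $\dist(G,T_{n,r})\ge\epsilon n^2$ and $n$ large, I will deduce $e(G)\le\ex(n,H)-\delta n^2$ for a suitable $\delta=\delta(\epsilon,H)>0$. Since the Erd\H os--Stone--Simonovits Theorem gives $\ex(n,H)=|E(T_{n,r})|+o(n^2)=\left(1-\tfrac1r\right)\binom n2+o(n^2)$, this is equivalent to the stated theorem. The engine is Szemer\'edi's Regularity Lemma, which I use to transfer the problem about $H$ into the analogous problem about $K_{r+1}$.

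Fix auxiliary constants $\epsilon_{\mathrm{reg}}\ll d\ll c$, all small relative to $\epsilon$, together with a ``waste threshold'' $\beta$; their precise interdependence is dictated by the error analysis below. Apply the Regularity Lemma to obtain an $\epsilon_{\mathrm{reg}}$-regular partition $V_0,V_1,\dots,V_k$ of $V(G)$ with $1/\epsilon_{\mathrm{reg}}\le k\le M(\epsilon_{\mathrm{reg}})$, and form the cluster graph $R$ on $[k]$ by joining $i$ to $j$ exactly when $(V_i,V_j)$ is $\epsilon_{\mathrm{reg}}$-regular of density at least $d$. Because $\chi(H)=r+1$, we have $H\subseteq K_{r+1}(t)$ with $t=|V(H)|$; so if $R$ contained a copy of $K_{r+1}$, then the embedding lemma for regular partitions would produce a copy of $K_{r+1}(t)$, hence of $H$, inside $G$ once the clusters exceed a size depending only on $t$, $d$, and $\epsilon_{\mathrm{reg}}$ --- which holds for $n$ large. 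As $G$ is $H$-free, $R$ is therefore $K_{r+1}$-free, so Tur\'an's theorem gives $e(R)\le\ex(k,K_{r+1})=|E(T_{k,r})|$.

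The heart of the argument is the standard accounting that recovers $e(G)$ from $R$. Edges of $G$ lying inside a cluster, inside an irregular pair, or inside a regular pair of density below $d$, together with those meeting $V_0$, number at most $\eta n^2$ for an $\eta$ depending only on $k^{-1}$, $\epsilon_{\mathrm{reg}}$, and $d$, which the hierarchy keeps negligible; each remaining edge $ij$ of $R$ accounts for at most $(n/k)^2$ edges of $G$, and at most $(1-\beta)(n/k)^2$ of them when $(V_i,V_j)$ has density below $1-\beta$. Writing $W$ for the number of such low-density edges of $R$, this yields $e(G)\le|E(T_{k,r})|(n/k)^2-\beta W(n/k)^2+\eta n^2$. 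I would next isolate and prove a pull-back lemma: if $R$ is within $ck^2$ edits of $T_{k,r}$ \emph{and} $W\le ck^2$, then sending each vertex of $G$ to the partition class of its cluster (distributing $V_0$ arbitrarily and rounding the class sizes to those of $T_{n,r}$) certifies $\dist(G,T_{n,r})<\epsilon n^2$, because every source of discrepancy --- within-cluster edges, low-density and irregular pairs, the exceptional set, the rounding, and the $ck^2$ edits of $R$ --- is bounded by the hierarchy. Contrapositively, $\dist(G,T_{n,r})\ge\epsilon n^2$ forces either (i) $R$ to be more than $ck^2$ edits from $T_{k,r}$, or (ii) $W>ck^2$. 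In case (ii), the bound above already loses at least $c\beta n^2$, dwarfing $\eta n^2$. In case (i), I appeal to the stability of Tur\'an's theorem for cliques --- the instance $H=K_{r+1}$ of the present theorem, which admits a short direct proof --- in the form ``a $K_{r+1}$-free graph on $k$ vertices that is more than $ck^2$ edits from $T_{k,r}$ has at most $|E(T_{k,r})|-c'k^2$ edges'', so that $e(R)$, and hence $e(G)$, drops by $\Omega(n^2)$. In either case $e(G)\le\left(1-\tfrac1r\right)\binom n2-\delta' n^2$ for some $\delta'=\delta'(\epsilon,H)>0$, and taking $\delta$ slightly below $\delta'$ absorbs the $o(n^2)$ error in $\ex(n,H)$.

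Two obstacles. The clique stability used in case (i) should be established for self-containment, and the natural route is induction on $r$: the case $r=1$ is trivial, and the inductive step takes a maximum-degree vertex $v$, observes that $G[N(v)]$ is $K_r$-free, applies the inductive hypothesis to it, and then bounds the number of edges inside $V\setminus N(v)$ by an edge count, thereby exhibiting the required $r$-partition whenever $e(G)$ is nearly extremal. The genuine difficulty, though, is the quantitative bookkeeping --- choosing $\epsilon_{\mathrm{reg}},d,\beta,c$ (and the lower bound on $k$) so that every loss incurred in moving between $G$ and $R$ is dominated and the final deficit $\delta' n^2$ survives. The cleanest way to manage this is to prove the pull-back lemma completely before assembling the remaining, more mechanical, steps.
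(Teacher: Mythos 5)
This is the classical Erd\H{o}s--Simonovits stability theorem (Theorem~\ref{thm:stab2}), which the paper does not prove: it is stated as background, cited to \cite{Erdstab} and \cite{Sim}, to motivate the definition of $\sigma$-stability. There is therefore no in-paper argument to compare yours against. Judged on its own terms, your outline is the standard modern proof via Szemer\'edi's Regularity Lemma: the cluster graph $R$ is $K_{r+1}$-free by the embedding lemma (since $H\subseteq K_{r+1}(t)$), Tur\'an's theorem bounds $e(R)$, and the case split into ``$R$ far from $T_{k,r}$'' versus ``many low-density edges of $R$'' is exactly what makes the pull-back to $\dist(G,T_{n,r})$ work. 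The accounting inequality $e(G)\le |E(T_{k,r})|(n/k)^2-\beta W(n/k)^2+\eta n^2$ is correct under the hierarchy you impose, and the reduction of the general case to clique stability is sound. For the record, the original proofs of Erd\H{o}s and Simonovits predate the regularity lemma and argue directly with degrees and neighborhoods (essentially a global version of the inductive clique argument you sketch), so your route is the modern one rather than the historical one; what it buys is a clean reduction of arbitrary $H$ to $K_{r+1}$, at the cost of tower-type constants.

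That said, what you have written is a plan rather than a proof, as you acknowledge. The two load-bearing components --- the pull-back lemma and clique stability in the edit-distance form ``$K_{r+1}$-free and more than $ck^2$ edits from $T_{k,r}$ implies $e\le |E(T_{k,r})|-c'k^2$'' --- are stated but deferred. Both are standard and fillable; for the clique case the max-degree induction does go through, but note that you must also control the edges inside $B=V\setminus N(v)$, which follows from $\sum_{b\in B}d(b)=e(A,B)+2e(G[B])\le |B|\,|A|$ with $A=N(v)$, so that near-extremality forces $e(G[B])$ to be small in addition to $G[A]$ being nearly extremal $K_r$-free. Until the pull-back lemma and the clique stability are written out with an explicit constant hierarchy $\epsilon_{\mathrm{reg}}\ll 1/k\ll d\ll\beta,c\ll\epsilon$, this should be regarded as a correct and standard outline rather than a complete proof.
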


Intuitively, Theorem \ref{thm:stab2} states that as the number of edges in an $H$-free graph increases, the graph begins to converge to the appropriate Tur\'an graph under the metric of edit distance. Simonovits then used this result to determine the extremal number for $pK_r$, the graph consisting of $p$ disjoint copies of $K_r$~\cite{Sim}.  Simonovits' work gave rise to what has been dubbed the \textit{stability method}: combining an asymptotic solution to a given extremal problem, which yields a set $\mathcal{C}$ of extremal objects, and a stability result, which shows that objects that are close to being extremal must ``look like'' those in $\mathcal{C}$, we can show that all extremal objects are in fact in $\mathcal{C}$. Stability methods have been used to attack a wide variety of extremal problems (c.f. \cite{BBSW, Kee, Mub2, Nik, Pik}), including recent stability approaches to problems in Ramsey Theory \cite{GRSS, GSS, NS}, and the hypergraph Tur\'an problem (c.f. \cite{BT}, \cite{Mub1}, or \cite{MubPik}).  

\section{Main Results}\label{sec:main}

The potential number has been calculated exactly for a variety of specific families of graphs, most notably complete graphs~\cite{LiSongLuo}.
Recently, Ferrara, LeSaulnier, Moffatt, and Wenger \cite{FLMW} determined $\sigma(H,n)$ asymptotically for all $H$, which is an Erd\H{o}s-Stone-Simonovits-type result for the potential function.  Much as Erd\H{o}s-Stone-Simonovits provides the class of target graphs for the First Stability Theorem, the result of Ferrara et al. informs our stability results for $\sigma(H,n)$.  As such, we will describe their result fully here.  

Let $H$ be a graph of order $k$ with at least one nontrivial component, and let $\alpha(H)$ be the independence number of $H$. The results in~\cite{FLMW} depend on a family of $k-\alpha(H)$ degree sequences of length $n$ that are not potentially $H$-graphic.

\begin{defn}
For $i \in \{\alpha(H)+1, \ldots, k\}$, define
\[
\nabla_i(H)=\min \{\Delta(F): F \text{ is an induced subgraph of }H\text{ and }|V(F)|= i\}.\]
For $n$ sufficiently large, define
$$\wtilpi_i(H,n) = ((n-1)^{k-i}, (k-i+\nabla_i(H)-1)^{n-k+i}),$$
where the exponents denote the multiplicities of the terms in the sequence. 
This is a graphic sequence as long as $n-k+i$ and $\nabla_i(H)-1$ are not both odd. In that case, we reduce the last term of this sequence by $1$.
\end{defn}

We claim that for all $i$, $\wtilpi_i(H,n)$ is not potentially $H$-graphic.
Every realization $G$ of $\wtilpi_i(H,n)$ consists of a clique of order $k-i$ that is joined to a graph with maximum degree $\nabla_i(H)-1$.
Every set of $k$ vertices in $G$ contains at least $i$ vertices that are not in the dominating clique, and these vertices cannot induce an $i$-vertex graph with maximum degree at least $\nabla_i$.
Thus $G$ cannot contain $H$ as a subgraph, implying that $$\sigma(H,n) \geq \max_i\{ \sigma(\wtilpi_i(H,n))\}+2.$$

As we are concerned with the asymptotics of the potential function, we need only consider the leading coefficient of $\sigma(\wtilpi_i(H,n))$. This coefficient is denoted $\widetilde\sigma_i(H)$.  A quick computation shows that $\widetilde\sigma_i(H)=2(k-i)+\nabla_i(H)-1$, so we define
\begin{align*}
\widetilde\sigma(H)&=\max_i\{\widetilde\sigma_i(H)\}\\
&=\max_i\{2(k-i)+\nabla_i(H)-1\}.
\end{align*}

With this notation in hand, we are ready to state the main result of \cite{FLMW}.
\begin{thm}[Ferrara, LeSaulnier, Moffatt, and Wenger \cite{FLMW}]\label{FLMW}
If $H$ is a graph and $n$ is a positive integer, then $$\sigma(H,n)=\widetilde\sigma(H)n + o(n).$$
\end{thm}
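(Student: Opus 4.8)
The lower bound $\sigma(H,n)\ge\widetilde\sigma(H)\,n+o(n)$ is already essentially in hand from the discussion preceding the statement: each $\widetilde\pi_i(H,n)$ fails to be potentially $H$-graphic, and a direct computation gives $\sigma(\widetilde\pi_i(H,n))=\widetilde\sigma_i(H)\,n+O(1)$, so $\sigma(H,n)\ge\max_i\sigma(\widetilde\pi_i(H,n))+2=\widetilde\sigma(H)\,n+o(n)$ (the $+2$ and the parity correction in the definition of $\widetilde\pi_i$ are absorbed by the error term). All the work lies in the matching upper bound: fix $\epsilon>0$ and show that for $n$ sufficiently large, every graphic $\pi=(d_1,\dots,d_n)$ with $\sigma(\pi)\ge(\widetilde\sigma(H)+\epsilon)n$ is potentially $H$-graphic. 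Write $k=|V(H)|$ and $\alpha=\alpha(H)$.

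The engine of the argument will be an embedding lemma giving easily checked degree conditions that force $\pi$ to be potentially $H$-graphic. The picture behind it: in the realization I intend to build, designate a set $S$ of $k$ vertices to carry a copy of $H$ and split it as $S=C\cup R$ with $|C|=k-i$, $|R|=i$, for a suitable $i\in\{\alpha+1,\dots,k\}$; choose the copy of $H$ so that $C$ receives $k-i$ vertices dominating the rest of $H$ while $R$ receives the vertex set $X$ of an induced $i$-vertex subgraph $F=H[X]$ with $\Delta(F)=\nabla_i(H)$. Then the core vertices need only have very high degree (at least $n-c_H$ for a constant $c_H=c_H(H)$), the residue vertices need degree roughly $(k-i)+\nabla_i(H)$ (each is adjacent to all of $C$ and plays a role in $F$), and every other edge of $H$ comes for free. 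Such a realization is produced from $\pi$ by a Havel--Hakimi / Kleitman--Wang style construction: repeatedly satisfy the largest outstanding demand and repair the degree sequence with edge-swaps, taking care never to disturb the vertices earmarked for $C$ and $R$ and never to create an unwanted edge inside $R$. The output has the shape: \emph{if $d_1,\dots,d_{k-i}\ge n-c_H$, if at least $i$ further terms are $\ge(k-i)+\nabla_i(H)$, and if the tail of $\pi$ is not pathologically degenerate, then $\pi$ is potentially $H$-graphic}. One convenient way to organize the construction is to induct, peeling core vertices off one at a time and replacing $H$ by $H$ with a dominating vertex deleted, with stars and other small configurations handled directly.

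It then remains to check that a sequence with $\sigma(\pi)\ge(\widetilde\sigma(H)+\epsilon)n$ meets the hypotheses of the embedding lemma for some admissible $i$. This is a structural analysis of the profile of a heavy sequence and is the part that demands the most bookkeeping. At one extreme, if $\pi$ has at least $k$ near-universal terms (degree $\ge n-c_H$) then, with a little care, any $k$ of the corresponding vertices can be made pairwise adjacent by swaps, and $K_k\supseteq H$. At the other extreme $\pi$ resembles one of the sequences $\widetilde\pi_i(H,n)$: a bounded number $k-i$ of near-universal terms together with a linear block of terms of size about $(k-i)+\nabla_i(H)$; here, subtracting off the contribution of the large terms from $\sigma(\pi)\ge(\widetilde\sigma(H)+\epsilon)n\ge(\widetilde\sigma_i(H)+\epsilon)n$ leaves enough slack to produce the required $i$ residue candidates, so the lemma applies with this $i$. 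The general case interpolates between these, with $i$ taken to be $k$ minus the number of near-universal terms; the essential point is that the strict gain of $\epsilon n$ over $\widetilde\sigma(H)n$, which in turn dominates every $\widetilde\sigma_i(H)n$, is exactly what is needed to clear the finitely many constant-sized thresholds in play, and a short separate argument handles the intermediate range in which there are between $k-\alpha$ and $k-1$ near-universal terms. Assembling these cases would give the upper bound and complete the proof.

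The technical heart, and the main obstacle, will be the embedding lemma: one must simultaneously control a high-degree core, force the residue $R$ to induce exactly $F$ --- it must contain every edge of $F$, so that the copy of $H$ closes up, yet no extra edges, so that the residue degrees stay as small as $(k-i)+\nabla_i(H)$, which is precisely why $\nabla_i(H)$ is defined by a minimum over \emph{induced} subgraphs --- and keep the leftover degree sequence globally realizable (parity and the Erd\H os--Gallai inequalities), all while the edge-swaps that install one part of $H$ do not wreck another. Handling this, together with the boundary regime in which the number of near-universal terms is close to $k-\alpha$, is what makes the argument lengthy; the asymptotic bookkeeping in the previous paragraph is, by comparison, routine once the lemma is available.
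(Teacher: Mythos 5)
This statement is not proved in the paper at all: it is quoted from \cite{FLMW}, and the only argument the paper supplies is the lower bound, namely that each $\wtilpi_i(H,n)$ is graphic and not potentially $H$-graphic, whence $\sigma(H,n)\ge\max_i\sigma(\wtilpi_i(H,n))+2=\widetilde\sigma(H)n+O(1)$. Your lower-bound paragraph reproduces that argument correctly (the key point, which you implicitly use and which the paper spells out, is that in any realization of $\wtilpi_i(H,n)$ every $k$-set contains at least $i$ vertices outside the dominating clique, and those vertices induce a graph of maximum degree at most $\nabla_i(H)-1<\nabla_i(H)$, so no $i$-vertex induced subgraph of $H$ fits there).

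For the upper bound, your architecture does match what \cite{FLMW} actually does --- and what the present paper re-implements in Lemma~\ref{Const}: an embedding lemma with easily checked degree conditions (this is precisely the Bounded Maximum Degree Theorem, Theorem~\ref{BMDT}), plus an iterative procedure that peels off near-universal vertices one at a time, replacing $H$ by a member of $\mathcal{D}^{(1)}(H)$ via Corollary~\ref{KWCor}(iii), and lays off small terms to restore the minimum-degree hypothesis. But as written your argument has two genuine gaps rather than one deferred technicality. First, the embedding lemma itself is only described ``in shape''; the claim that the edge-exchange process can always install the induced copy of $F$ on $R$ without creating forbidden edges inside $R$ and without violating realizability of the remainder is exactly the content of the BMDT, and it is the hard theorem here --- you cannot treat it as routine. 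Second, and independently, your accounting in the structural analysis is too loose: when you ``subtract off the contribution of the large terms,'' each peeled near-universal vertex costs $2n+O(1)$ from $\sigma(\pi)$, and the reason the residual sequence still clears the threshold for the smaller target graph is Lemma~\ref{PotDel2}-type monotonicity ($\widetilde\sigma(F)\le\widetilde\sigma(H)-2t$ for a suitable $F\in\mathcal{D}^{(t)}(H)$ when $t<k-\alpha(H)$), which in turn depends on choosing $F$ to contain a maximum independent set of $H$. Without that choice the induction can fail, because $2i^*(F)-\nabla_{i^*(F)}(F)$ could otherwise drop. You gesture at the right interpolation but do not identify this mechanism, and the ``short separate argument'' for the intermediate range is precisely where the $\epsilon n$ of slack must be converted into either too many laid-off terms (forcing potential $H$-graphicality by induction on length) or degree-sufficiency for $K_p\vee F_{k-\ell-p}$. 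So: right strategy, but the two load-bearing steps are asserted, not proved.
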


Note that this value is maximized when $2i-\nabla_i(H)$ is minimized. 

\begin{defn}
For a graph $H$ and $n$ sufficiently large, define
\begin{align*}
\mathcal{P}(H,n)&=\{ \wtilpi_i(H,n) \colon \widetilde\sigma_i(H)= \widetilde\sigma(H)\}\\
&=\{\wtilpi_i(H,n) \colon i\in\arg\min_i \{2i-\nabla_i(H)\}\}.
\end{align*}
\end{defn}

We note that there are graphs $H$ for which $\mathcal{P}(H,n)$ contains multiple sequences (examples include $H=K_k-P_3$ and $H=K_{k-5}\vee P_5$).

In order to define a stability concept for the potential function, we require a measure of distance between two graphic sequences. Given graphic sequences $\pi_1 = (x_1, \ldots, x_n)$ and $\pi_2 = (y_1, \ldots, y_m)$ with $m \le n$, we let $\| \pi_1-\pi_2\| =  \sum_{j=1}^n |x_j-y_j|$, where we define $y_{m+1}, \ldots, y_n$ to be 0 if $m \neq n$.
Note that this is the $\ell^1$ norm of $\pi_1-\pi_2$, 
which aligns with edit distance, since if $G$ is a graph for which $\dist(G, T_{n,r}) < \epsilon n$, then $\|\pi(G)-\pi(T_{n,r})\|< 4 \epsilon n$. 
 
\begin{defn} A graph $H$ is {\bf stable with respect to the potential number}, or {\bf $\sigma$-stable}, if for any $\epsilon> 0$, there exists a $\delta > 0$ and an $n_0=n(\epsilon, H)$ such that for any graphic sequence $\pi$ of length $n \geq n_0$ that is not potentially $H$-graphic and that satisfies
$$\sigma (\pi) \geq \sigma(H,n)-\delta n,$$
there is some $\pi' \in \mathcal{P}(H,n)$ such that $\|\pi-\pi'\| < \epsilon n$.
\end{defn}

To demonstrate the concept of $\sigma$-stability, we provide an example of a graph that is not $\sigma$-stable.
This draws an immediate contrast with the extremal function, where Theorem \ref{thm:stab2} demonstrates that every nonbipartite graph is stable with respect to the extremal function.  

Erd\H os, Jacobson, and Lehel proved that $\sigma(K_3,n)=2n$.
It is straightforward to check that $\mathcal P(K_3,n)$ consists of a single sequence, namely $(n-1,1^{n-1})$, which is uniquely realized by the star of order $n$.
However, the sequence $\pi=(\CL{\frac n2},\FL{\frac n2},1^{n-2})$, whose only realization is the (nearly) balanced double star, also has sum $2n-2$ and is not potentially $K_3$-graphic (see Figure~\ref{Fig:K3not}).
Thus $\sigma(\pi)=\sigma(K_3,n)-2$ and for all $\pi'\in \mathcal P(K_3,n)$ we have $||\pi-\pi'||\ge n-3$.
Therefore $K_3$ is not $\sigma$-stable, and a similar examination of $((n-1)^k,k^{n-1})$ and $\pi=((n-1)^{k-1}, \CL{\frac n2}+(k-1),\FL{\frac n2}+(k-1),k^{n-2})$ demonstrates that $K_k$ is not $\sigma$-stable for any $k\ge 3$.

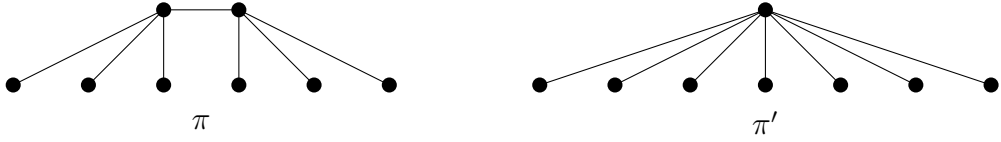
\begin{figure}
\begin{tikzpicture}

\fill (0,0) \myvert;
\fill (1,0) \myvert;
\fill (2,0) \myvert;
\fill (3,0) \myvert;
\fill (4,0) \myvert;
\fill (5,0) \myvert;
\fill (2,1) \myvert;
\fill (3,1) \myvert;
\draw (0,0)--(2,1);
\draw (1,0)--(2,1);
\draw (2,0)--(2,1);
\draw (2,1)--(3,1);
\draw (3,0)--(3,1);
\draw (4,0)--(3,1);
\draw (5,0)--(3,1);

\node at (2.5,-.5) {$\pi$};

\fill (7,0) \myvert;
\fill (8,0) \myvert;
\fill (9,0) \myvert;
\fill (10,0) \myvert;
\fill (11,0) \myvert;
\fill (12,0) \myvert;
\fill (13,0) \myvert;
\fill (10,1) \myvert;
\draw (7,0)--(10,1);
\draw (8,0)--(10,1);
\draw (9,0)--(10,1);
\draw (10,0)--(10,1);
\draw (11,0)--(10,1);
\draw (12,0)--(10,1);
\draw (13,0)--(10,1);

\node at (10,-.5) {$\pi'$};

\end{tikzpicture}
\caption{The unique realizations of $\pi$ and $\pi'$, two degree sequences on $8$ vertices that are not potentially $K_3$-graphic.
It follows that $K_3$ is \emph{not} $\sigma$-stable since $\mathcal P(K_3,n)=\{\pi'\}$ and $\|\pi-\pi'\|> n/3$.}\label{Fig:K3not}
\end{figure}

Let $i^*(H)$ be the smallest index $i \in \{\alpha(H)+1, \ldots, k\}$ for which $\wtilpi_{i}(H,n)\in \mathcal P(H,n)$.
Recall that this means that $i^*(H)$ is the smallest index $i$ for which $2i-\nabla_i$ is minimized. 
Our main theorem describes a large class of graphs that are $\sigma$-stable. 

\begin{thm}\label{thm:MainLow}
If $H$ is a graph such that  $2i^*(H)-\nabla_{i^*}(H) \leq 2\alpha(H)$, then $H$ is $\sigma$-stable. 
\end{thm}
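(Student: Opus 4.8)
The plan is to prove the contrapositive within the $\epsilon$--$\delta$ quantifiers. Fix $\epsilon>0$; I will choose $\beta,\delta>0$ and $n_0$, depending only on $\epsilon$ and $H$, so that every graphic sequence $\pi=(d_1,\dots,d_n)$ with $n\ge n_0$, $\sigma(\pi)\ge\sigma(H,n)-\delta n$, and $\|\pi-\pi'\|\ge\epsilon n$ for all $\pi'\in\mathcal{P}(H,n)$ is potentially $H$-graphic. By Theorem~\ref{FLMW} I may use $\sigma(\pi)\ge(\wtilsig(H)-\delta')n$ in place of the sum hypothesis, where $\delta'>0$ is a parameter I can make as small as I like. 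The embedding principle I rely on: for each $i\in\{\alpha(H)+1,\dots,k\}$ pick an $i$-set $B_i\subseteq V(H)$ with $\Delta(H[B_i])=\nabla_i(H)$; then $H\subseteq K_{k-i}\vee H[B_i]$, and taking $B$ inside a maximum independent set also gives $H\subseteq K_{k-\alpha(H)}\vee\overline{K_{\alpha(H)}}$. So it suffices to exhibit, in \emph{some} realization of $\pi$, a clique on $k-i$ vertices all of whose members are adjacent to a common set of $i$ further vertices which among themselves span $H[B_i]$.

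First I would isolate the high-degree indices $L=\{j:d_j\ge\beta n\}$. A $2$-switch argument of the kind developed in Section~\ref{sec:tech} shows that if $|L|\ge k$ then $k$ vertices of $L$ can be made into a clique, giving $K_k\supseteq H$; hence $|L|\le k-1$, and we set $i:=k-|L|$. Next I would rule out $i\le\alpha(H)$. If every vertex of $L$ has degree $n-o(n)$, then all but $o(n)$ vertices are adjacent to all of $L$, and choosing a $(k-\alpha(H))$-subset $Q\subseteq L$ as the clique and any $\alpha(H)$ of the remaining common neighbours finishes the embedding. If instead some vertex of $L$ has degree at most $(1-\gamma)n$, then the required sum forces either many vertices outside $L$ to have degree at least $|L|$ (so they can be rerouted into common neighbours of a clique on $L$) or two members of $L$ to share, outside $L$, a neighbour of positive degree that can be rerouted into the needed structure; in every case $\pi$ is potentially $H$-graphic. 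Thus $i\ge\alpha(H)+1$. An analogous argument then forces the $L$-vertices to have degree $n-o(n)$, since any deficit there frees enough capacity to build the missing common neighbours or the copy of $H[B_i]$. Henceforth $\pi$ has $k-i$ terms equal to $n-o(n)$, with $\alpha(H)+1\le i\le k$, together with $n-(k-i)$ ``low'' terms, each at most $\beta n$.

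It remains to analyze the low part $\pi^-$ (the $n-(k-i)$ low terms) and deduce that $\wtilsig_i(H)=\wtilsig(H)$ (so $\wtilpi_i(H,n)\in\mathcal{P}(H,n)$) and that $\|\pi-\wtilpi_i(H,n)\|<\epsilon n$. Each low vertex sends at most $k-i$ edges to $L$, so the sum hypothesis, together with the $L$-degrees being $n-o(n)$, gives a lower bound of the form $(\wtilsig(H)-2(k-i)-\delta')n+o(n)$ on the number of edge-endpoints internal to $\pi^-$. On the other hand, since $\pi$ is not potentially $H$-graphic while the $L$-vertices are near-universal and can be rewired to dominate any chosen $i$-set, $\pi^-$ cannot be potentially $H[B_i]$-graphic; a lemma from Section~\ref{sec:tech} turns this into the statement that, apart from a bounded number of exceptional low vertices, every low vertex has internal degree at most $\nabla_i(H)-1$, and the exceptions are too few to rescue the sum. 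Comparing the two estimates forces $2i-\nabla_i(H)=\min_j(2j-\nabla_j(H))$, i.e. $i$ is a minimizing index, and then the estimates become tight, pinning $\pi^-$ to within $\ell^1$-distance $o(n)$ of the constant sequence with all terms equal to $k-i+\nabla_i(H)-1$. Combining the two parts of $\pi$ gives $\|\pi-\wtilpi_i(H,n)\|<\epsilon n$, a contradiction.

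The hypothesis $2i^*(H)-\nabla_{i^*}(H)\le 2\alpha(H)$ is exactly what makes the arithmetic of the last two paragraphs close. It is equivalent to $\wtilsig(H)\ge 2(k-\alpha(H))-1$, and that extra unit of sum over the baseline value $2(k-\alpha(H)-1)$ is what is needed both to defeat the $i\le\alpha(H)$ configurations and to leave no slack in the low-part comparison for a near-extremal non-$H$-graphic sequence far from $\mathcal{P}(H,n)$. When $2i^*(H)-\nabla_{i^*}(H)=2\alpha(H)+1$ instead, this slack returns: it is occupied by the ``spread-out'' sequences --- the (balanced) double star joined to $K_{k-\alpha(H)-1}$ and its relatives --- which is why such graphs, including $K_k$, fail to be $\sigma$-stable. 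I expect the genuine difficulty to be the second and third paragraphs: disposing of the cases where $L$ is large or its vertices are not near-universal requires a careful combination of $2$-switch rerouting with degree accounting, and making the low-part squeeze quantitative --- showing that every admissible deformation of $\pi^-$ either lowers $\sigma(\pi)$ below $\sigma(H,n)-\delta n$ or produces a copy of $H[B_i]$, hence of $H$ --- is the technical core.
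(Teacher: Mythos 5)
Your overall architecture --- isolate a set $L$ of near-universal vertices to serve as the dominating clique of $\wtilpi_i(H,n)$, then show the residual ``low'' part either hosts an induced subgraph $F_i$ of $H$ with $\Delta(F_i)=\nabla_i(H)$ (so that $H\subseteq K_{k-i}\vee F_i$ embeds) or is pinned in $\ell^1$ to the tail $(k-i+\nabla_i(H)-1)^{n-k+i}$ --- is the same as the paper's (Lemma~\ref{Const}). But the two steps you yourself flag as the technical core are asserted rather than proved, and there is a concrete obstacle to each. First, nothing in your sketch handles low vertices of very small degree: a sequence with $\sigma(\pi)\ge(\wtilsig(H)-\delta)n$ may have $\Theta(n)$ terms equal to $1$, and such vertices can neither be ``rerouted into common neighbours of a clique on $L$'' (they have no spare incidences) nor participate in a copy of $H[B_i]$; their presence also invalidates the degree-counting on $\pi^-$ in your third paragraph. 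The paper removes them by iteratively laying off all terms below $\wtilsig(H)/2$ via Kleitman--Wang (Theorem~\ref{thm:KW}), using Lemma~\ref{lemma:iterated_KW} to show each lay-off can only raise the average of the residual sum, and Claim~\ref{claim:step3} to show that if linearly many terms are laid off then Theorem~\ref{FLMW} applies to a sequence targeting a deleted subgraph of $H$ and $\pi$ is already potentially $H$-graphic. Relatedly, the ``lemma from Section~\ref{sec:tech}'' you invoke to conclude that all but boundedly many low vertices have internal degree at most $\nabla_i(H)-1$ does not exist; the closest available tool is the Bounded Maximum Degree Theorem (Theorem~\ref{BMDT}), and it requires the minimum term to be at least $k-\alpha(H)$ --- arranging which is exactly what the lay-off bookkeeping accomplishes.

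Second, your claims that $i=k-|L|\ge\alpha(H)+1$ and that every vertex of $L$ has degree $n-o(n)$ are precisely where the Type~1/Type~2 dichotomy lives, and the case split ``either many vertices outside $L$ have degree at least $|L|$, or two members of $L$ share a reroutable neighbour'' is neither established nor obviously exhaustive (when the $L$-degree deficit is small but nonzero, the internal-edge surplus you would harvest can vanish). The paper does not prove near-universality by rerouting: it applies Theorem~\ref{BMDT} as a black box to the residual sequence, and the hypothesis $2i^*-\nabla_{i^*}\le 2\alpha(H)$ enters through the parameter $b_H=0$, which guarantees both that the lay-off threshold $\lceil\wtilsig(H)/2\rceil$ reaches $k-\alpha(H)$ (so the BMDT applies with target $K_{k-\ell-\alpha}\vee\overline{K}_{\alpha}$) and that the terminal graph $K_{k-\alpha}\vee\overline{K}_{\alpha}$ actually contains $H$. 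Deferring the use of the hypothesis to ``the arithmetic closes'' is not sufficient, as Theorem~\ref{thm:NotStable} shows: the sequence $\rho(H,n)$ has $|L|=k-\alpha$ with two members of degree about $n/2$, and what separates the stable from the unstable case there is the containment $H\subseteq K_{k-\alpha}\vee\overline{K}_{\alpha}$ versus $H\not\subseteq K_{k-\alpha-2}\vee S_{b_1,b_2}$, not a sum count. To complete your argument you would need to import (or reprove) Theorem~\ref{BMDT} and supply the quantitative lay-off analysis; as written the proposal is a faithful outline of the paper's strategy with its two hardest steps left as assertions.
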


Henceforth, when $H$ is understood, we will suppress the argument $H$ in our use of parameters like $\alpha, \nabla_i$, and $i^*$.  For all choices of $H$, because $\nabla_{\alpha +1}\ge 1$, we have \[ 2i^*-\nabla_{i^*} \leq 2(\alpha+1)-\nabla_{\alpha+1} \leq 2\alpha+1.\] 
Therefore, Theorem~\ref{thm:MainLow} applies to $H$ unless $2i^*-\nabla_{i^*} = 2\alpha +1$. 
In this case, $2(\alpha+1)-\nabla_{\alpha+1}\ge 2\alpha+1$, so we conclude that $\nabla_{\alpha+1}=1$.
Therefore there is a set of $\alpha+1$ vertices in $H$ that induces a graph consisting of a matching and isolated vertices.
It is worth noting that $2i^*-\nabla_{i^*} \leq 2\alpha$ does not necessarily imply that $\nabla_{\alpha+1}(H)>1$. 

Our next result applies to certain graphs for which $2i^*-\nabla_{i^*}=2\alpha+1$, namely those with an induced subgraph of order $\alpha+1$ that has exactly one edge.  Let $S_{x,y}$ be the double star with central vertices of degree $x+1$ and $y+1$.
%For a set of vertices $X$ in a graph $G$, let $G[X]$ denote the subgraph of $G$ induced by $X$.

\begin{thm}\label{thm:MainHigh}
If $H$ is a graph of order $k$ and independence number $\alpha$ such that 
\begin{itemize}%[(a)]
\item $2i^*-\nabla_{i^*}= 2\alpha+1$ and
\item $H$ has a set $X$ of $\alpha+1$ vertices such that $H[X]$ has exactly one edge,
\end{itemize}
then $H$ is $\sigma$-stable if and only if $H\subseteq K_{k-\alpha(H)-2} \vee S_{b_1, b_2}$ for some $b_1$ and $b_2$ with $b_1+b_2 = \alpha(H)$.
\end{thm}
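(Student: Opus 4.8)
Write $\alpha=\alpha(H)$ and $k=|V(H)|$; we may assume $k\ge\alpha+2$, since the only graph meeting the hypotheses with $k=\alpha+1$ is a single edge, which is trivially $\sigma$-stable. The plan is to prove the two implications of the equivalence separately, after recording some consequences of the hypotheses. As observed in the discussion following Theorem~\ref{thm:MainLow}, $2i^*-\nabla_{i^*}=2\alpha+1$ forces $\nabla_{\alpha+1}=1$ and $i^*=\alpha+1$; hence $\widetilde\sigma(H)=2(k-\alpha-1)$, the sequence $\wtilpi_{\alpha+1}(H,n)=((n-1)^{k-\alpha-1},(k-\alpha-1)^{n-k+\alpha+1})$ lies in $\mathcal{P}(H,n)$, and (up to a parity decrement of the last term) every member of $\mathcal{P}(H,n)$ has the form $((n-1)^a,(2(k-\alpha-1)-a)^{n-a})$ for some $a\in\{0,\ldots,k-\alpha-1\}$. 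The engine for both directions is the combinatorial fact that \emph{for all $m_1,m_2\ge\alpha$, $H\subseteq K_{k-\alpha-2}\vee S_{m_1,m_2}$ if and only if $H\subseteq K_{k-\alpha-2}\vee S_{b_1,b_2}$ for some $b_1,b_2\ge0$ with $b_1+b_2=\alpha$}. For the nontrivial direction, take an embedding of $H$ and repeatedly remap a vertex of $H$ lying in the double-star part to an unused (hence universal) clique vertex, until exactly $k-\alpha-2$ vertices of $H$ are placed in the clique; the remaining $\alpha+2$ vertices of $H$ then induce a subgraph of an induced subgraph of $S_{m_1,m_2}$ on $\alpha+2$ vertices, and every such graph embeds into $S_{b_1,b_2}$ for a suitable split with $b_1+b_2=\alpha$ (the case in which those $\alpha+2$ vertices avoid both centers of $S_{m_1,m_2}$ cannot occur, as it would exhibit an independent set of order $\alpha+2$ in $H$).

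For necessity I would argue the contrapositive. Assume $H\not\subseteq K_{k-\alpha-2}\vee S_{b_1,b_2}$ for every split $b_1+b_2=\alpha$, fix $\epsilon=\tfrac13$, and for each large $n$ consider the nearly balanced ``blown-up double star''
\[
\pi^{\#}=\Big((n-1)^{k-\alpha-2},\ \CL{\tfrac{n-k+\alpha}{2}}+(k-\alpha-1),\ \FL{\tfrac{n-k+\alpha}{2}}+(k-\alpha-1),\ (k-\alpha-1)^{n-k+\alpha}\Big),
\]
decrementing the last term by $1$ if the sum would otherwise be odd. A short Havel-Hakimi argument shows $\pi^{\#}$ is graphic with the \emph{unique} realization $K_{k-\alpha-2}\vee S_{m_1,m_2}$, where $m_1+m_2=n-k+\alpha$ and $|m_1-m_2|\le1$: deleting the $k-\alpha-2$ dominating vertices leaves a degree sequence whose only realization is a double star. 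By the combinatorial fact, $H\subseteq K_{k-\alpha-2}\vee S_{m_1,m_2}$ would contradict the assumption, so $\pi^{\#}$ is not potentially $H$-graphic. A direct computation gives $\sigma(\pi^{\#})=\widetilde\sigma(H)n-O(1)$, so by Theorem~\ref{FLMW}, $\sigma(\pi^{\#})\ge\sigma(H,n)-\delta n$ for every $\delta>0$ once $n$ is large, while for each $\pi'\in\mathcal{P}(H,n)$ the $(k-\alpha-1)$-st coordinates of $\pi^{\#}$ and $\pi'$ differ by about $n/2$, so $\|\pi^{\#}-\pi'\|>n/3$. Hence $H$ is not $\sigma$-stable.

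For sufficiency, suppose $H\subseteq H^\star:=K_{k-\alpha-2}\vee S_{b_1,b_2}$ for some split, and let $\pi$ be a non-potentially-$H$-graphic graphic sequence of length $n$ with $\sigma(\pi)\ge\sigma(H,n)-\delta n$. Since $\pi$ is not potentially $H$-graphic, $\sigma(\pi)<\sigma(H,n)$, so by Theorem~\ref{FLMW} the sum $\sigma(\pi)$ is pinned to within $o(n)+\delta n$ of $2(k-\alpha-1)n$. Using the technical lemmas of Section~\ref{sec:tech} I would establish a dichotomy: either $\pi$ is within $\epsilon n$ of some member of $\mathcal{P}(H,n)$, and we are done; or, up to an $\ell^1$-perturbation of size $\epsilon n$, $\pi$ has the shape $\big((n-1)^{k-\alpha-2},\,x_1,\,x_2,\,(k-\alpha-1)^{n-k+\alpha}\big)$ with $x_1,x_2\ge\eta n$ for a constant $\eta=\eta(\epsilon)>0$ (the pinned sum gives $x_1+x_2\approx n$, and being $\epsilon n$-far from $\wtilpi_{\alpha+1}(H,n)$ excludes the degenerate cases where one of $x_1,x_2$ is $O(1)$). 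In the second case I would reach a contradiction by exhibiting a realization of $\pi$ that contains $H^\star\supseteq H$: use the $k-\alpha-2$ coordinates of value $n-1$ as the clique of $H^\star$, the coordinates $x_1$ and $x_2$ as the centers of $S_{b_1,b_2}$, and $\alpha$ coordinates of value at least $k-\alpha-1$ from the bulk as the leaves, then verify via a Kleitman-Wang / leftover-degree-sequence argument that the remaining degrees can be realized. Thus every non-potentially-$H$-graphic $\pi$ with $\sigma(\pi)\ge\sigma(H,n)-\delta n$ is within $\epsilon n$ of $\mathcal{P}(H,n)$, i.e., $H$ is $\sigma$-stable.

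The main obstacle is the sufficiency direction, specifically two points. First, one must upgrade the heuristic ``$\sigma(\pi)$ is pinned near $2(k-\alpha-1)n$'' to the precise structural dichotomy above --- that apart from an $\ell^1$-error of size $o(n)$, the degree mass sits on at most $k-\alpha-1$ essentially-universal vertices, a near-constant bulk of value $k-\alpha-1$, and at most two ``center'' vertices --- with all error terms controlled uniformly (this is exactly what the technical lemmas of Section~\ref{sec:tech} are designed to provide). Second, in the two-center case one must confirm that after embedding $K_{k-\alpha-2}\vee S_{b_1,b_2}$ the leftover degree sequence is graphic, with no Erd\H{o}s-Gallai or parity obstruction. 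This second point is where the shape $K_{k-\alpha-2}\vee S_{b_1,b_2}$ is essential: it requires linear degree on only two vertices, whereas any strictly larger target would force linear degree on $\alpha$ of the ``leaf'' vertices as well, which the budget $\sigma(\pi)\le\widetilde\sigma(H)n+o(n)$ forbids --- and it is precisely this asymmetry that makes the characterization an ``if and only if.''
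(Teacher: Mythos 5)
Your necessity argument is essentially the paper's: your $\pi^{\#}$ is (up to bookkeeping) the sequence $\rho(H,n)$ from the paper's Theorem~\ref{thm:NotStable}, uniquely realized by $K_{k-\alpha-2}\vee S_{m_1,m_2}$, and comparing the $(k-\alpha-1)$-st coordinates is exactly how the paper shows $\|\rho-\pi'\|>n/3$ for all $\pi'\in\mathcal{P}(H,n)$. That half is sound.

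The sufficiency direction has a genuine gap. Your argument rests on a structural dichotomy --- either $\pi$ is $\epsilon n$-close to $\mathcal{P}(H,n)$, or $\pi$ is an $\ell^1$-perturbation of $\bigl((n-1)^{k-\alpha-2},x_1,x_2,(k-\alpha-1)^{n-k+\alpha}\bigr)$ with exactly two linear-degree ``centers'' --- which you attribute to the technical lemmas of Section~\ref{sec:tech}. Those lemmas do not provide this. What Lemma~\ref{Const} actually yields in the Type~2 case is only that $\pi$ is potentially $(K_{k-\alpha-1}\vee\overline{K}_{\alpha+1})$-graphic; it says nothing about the number or size of intermediate-degree terms. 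A priori, a non-potentially-$H$-graphic sequence with near-extremal sum could have three terms near $n/3$, or many terms of order $\sqrt{n}$ sitting above the bulk --- shapes your dichotomy excludes without justification, and excluding them is the real content of the proof. The paper instead starts from a realization containing $K_{k-\alpha-1}\vee\overline{K}_{\alpha+1}$ on the top-degree vertices (via Theorem~\ref{yin_complete_split}) and uses edge exchanges to prove two bounds: $d_{k-\alpha}<2k^2$ (else one assembles $K_{k-\alpha-2}\vee S_{b_1,b_2}\supseteq H$ around a high-degree vertex) and $d_{k-\alpha+8k^4}\le k-\alpha-1$ (else one creates exactly one edge inside the independent set, realizing $H[X]$ and hence $H$); only these force $\pi$ to be close to $\wtilpi_{\alpha+1}(H,n)$. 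Tellingly, your sufficiency argument never uses the hypothesis that $H$ has an $(\alpha+1)$-set inducing exactly one edge, which is exactly what the second edge-exchange claim requires --- a reliable sign that something essential is missing. (Your resolution of the ``two-center'' case by embedding $K_{k-\alpha-2}\vee S_{b_1,b_2}$ is also only sketched, but the unjustified dichotomy is the primary problem.)
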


Theorems~\ref{thm:MainLow} and~\ref{thm:MainHigh} imply that a wide variety of graphs are $\sigma$-stable, including complete split graphs (a {\it complete split graph} is a graph of the form $K_r\vee \overline K_{t}$), complete bipartite graphs, friendship graphs, and odd cycles.
We will say that $H$ is {\it Type 1} if $2i^*-\nabla_{i^*}(H) \leq 2 \alpha(H)$, and {\it Type 2} if $2i^*-\nabla_{i^*}(H)=2\alpha(H)+1$. 
Of those graphs just listed, the complete split graphs and complete bipartite graphs are Type 1, and the odd cycles and friendship graphs are Type 2. 

\section{Technical lemmas}\label{sec:tech}

To prove Theorems \ref{thm:MainLow} and \ref{thm:MainHigh}, we will need several results from the literature on graphic sequences and some additional technical lemmas.
The first two results, due to Erd\H os and Gallai and to Kleitman and Wang, are simple characterizations of graphic sequences. 

\begin{thm}[Erd\H{o}s and Gallai~\cite{ErdGal}] \label{thm:ErdGal}
A nonincreasing sequence $\pi = (d_1, \ldots, d_n)$ of nonnegative integers is graphic if and only if $\sum_{i=1}^n d_i$ is even and, for all $p\in\{1,\ldots,n\}$,
\begin{equation*}
   \sum_{i=1}^p d_i\leq p(p-1)+\sum_{i=p+1}^n\min\{d_i,p\}. 
\end{equation*}
\end{thm}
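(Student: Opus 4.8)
The plan is to prove the two implications separately. Necessity is a short double‑counting argument; essentially all of the work is in sufficiency, which I would obtain by induction on $\sigma(\pi)=\sum_i d_i$.

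\textbf{Necessity.} Suppose $G$ realizes $\pi$, list its vertices $v_1,\dots,v_n$ in non‑increasing order of degree, and fix $p\in\{1,\dots,n\}$. Then $\sigma(\pi)=2|E(G)|$ is even, and with $S=\{v_1,\dots,v_p\}$ we count endpoints of edges meeting $S$: $\sum_{i=1}^p d_i = 2|E(G[S])| + |E_G(S,V(G)\setminus S)|$. Since $|E(G[S])|\le\binom p2$, the first term is at most $p(p-1)$; and each $v_j$ with $j>p$ contributes at most $\min\{d_j,p\}$ edges to $S$, since $v_j$ has $d_j$ neighbours in all and at most $p$ of them can lie in $S$. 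Summing over $j>p$ yields the Erd\H os--Gallai inequality.

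\textbf{Sufficiency.} Assume $\pi$ has even sum and satisfies every inequality. If $\sigma(\pi)=0$ the empty graph realizes $\pi$, so suppose $d_1\ge 1$; since $\sigma(\pi)$ is even, $\sigma(\pi)\ge 2$ and hence $d_2\ge 1$. Let $q$ be the least index with $q\ge 2$ and $d_q>d_{q+1}$, using the convention $d_{n+1}:=0$; such a $q$ exists because the non‑increasing sequence $d_2\ge\cdots\ge d_n\ge d_{n+1}=0$ is positive at its start, and by minimality of $q$ we have $d_2=d_3=\cdots=d_q\ge 1$. Let $\pi'$ be the sequence obtained from $\pi$ by subtracting $1$ from the first term and from the $q$-th term and re‑sorting into non‑increasing order; this is a sequence of nonnegative integers with $\sigma(\pi')=\sigma(\pi)-2$, again even and strictly smaller. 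The argument is finished once we establish the next two steps.

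\emph{Step 1: $\pi'$ still satisfies the Erd\H os--Gallai inequalities.} This is the technical heart of the proof, and the step I expect to be the main obstacle. Fix $p$. Passing from $\pi$ to $\pi'$ lowers the left side of the $p$-th inequality by some amount $\lambda_p\in\{0,1,2\}$ (the part of the two decrements that survives among the $p$ largest entries after re‑sorting), and lowers the right side by some $\mu_p\ge 0$ (the term $p(p-1)$ is unchanged, while $x\mapsto\min\{x,p\}$ is non‑decreasing, so each tail term can only shrink). Using that $\pi$ satisfies its $p$-th inequality, it suffices to verify in each of a few cases (according to whether $p<q$, $p=q$, or $p>q$, and where the equalities among the $d_i$ occur) that either $\mu_p\le\lambda_p$ or there was already enough slack in the $p$-th inequality for $\pi$. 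The choice of $q$ — so that $d_2=\cdots=d_q>d_{q+1}$ — is exactly what makes the re‑sorting of $\pi'$ easy to describe and what forces these case checks to go through; they are routine but must be done carefully.

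\emph{Step 2: a realization of $\pi'$ lifts to one of $\pi$.} By the induction hypothesis $\pi'$ is graphic; let $G'$ realize it. Exactly two vertices $u,w$ of $G'$ — the images of the first and the $q$-th positions — have degree one below their targets in $\pi$, and every other vertex already meets its target. If $uw\notin E(G')$, add the edge $uw$ to obtain a realization of $\pi$. If $uw\in E(G')$, first perform a single $2$-switch that deletes $uw$ without changing any degree: since $\deg_{G'}(u)\le n-2$, $u$ has a non‑neighbour $x\ne w$, and a short argument — invoking the Erd\H os--Gallai hypothesis on $\pi$, which excludes the degenerate configurations (such as $G'$ being a clique plus isolated vertices) — produces such an $x$ together with a vertex $y\ne u$ with $wy\notin E(G')$ and $xy\in E(G')$; replacing $\{uw,xy\}$ by $\{ux,wy\}$ keeps $G'$ a realization of $\pi'$ while making $u,w$ non‑adjacent, after which we add $uw$. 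Either way the result realizes $\pi$, closing the induction. (An alternative that avoids the re‑sorting bookkeeping of Step 1 is an extremal/deficiency argument: among graphs $G$ on $\{v_1,\dots,v_n\}$ with $\deg_G(v_i)\le d_i$ for all $i$, take one maximizing $\sigma(\deg_G)$ and then the least deficient index, observe that the deficient vertices form a clique and the least deficient vertex is joined to all earlier vertices, and derive a contradiction with an Erd\H os--Gallai inequality unless there is no deficiency; this merely trades the bookkeeping of Step 1 for a comparably delicate analysis.)
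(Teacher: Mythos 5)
The paper does not prove this statement: Theorem~\ref{thm:ErdGal} is quoted as a classical result of Erd\H{o}s and Gallai~\cite{ErdGal} and used as a black box, so there is no internal proof to compare yours against, and I can only assess your argument on its own terms. Your necessity direction is complete and correct, and your sufficiency plan (induction on $\sigma(\pi)$, decrement two suitably chosen terms, verify that the inequalities survive, then lift a realization of the reduced sequence by adding an edge, performing a $2$-switch first if the two deficient vertices are adjacent) is essentially the standard Choudum-style route, which is known to work.

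As written, however, the sufficiency half has genuine gaps at exactly its two load-bearing steps, and you flag this yourself. In Step 1 the verification that $\pi'$ satisfies the Erd\H{o}s--Gallai inequalities is never carried out; ``routine but must be done carefully'' is its entire content, and your choice to decrement $d_1$ and $d_q$ makes the bookkeeping heavier than necessary, because when $d_1=d_2$ the re-sorting moves a decremented entry across the prefix boundary at $p$, so both sides of the $p$-th inequality must be recomputed for the re-sorted sequence (the classical choice --- decrement the term at the last index $t$ of the initial block of equal maximal terms, so $d_t>d_{t+1}$, together with the last positive term --- keeps the sequence nonincreasing with no re-sorting, which is precisely what makes the case analysis short). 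In Step 2 the crucial claim is that if the two deficient vertices $u,w$ are adjacent in $G'$, then there exist $x,y\notin\{u,w\}$ with $xy\in E(G')$, $ux\notin E(G')$, $wy\notin E(G')$; your justification is only that the hypothesis on $\pi$ ``excludes the degenerate configurations.'' That is the heart of the lifting step: you must prove that whenever no such switch exists, some Erd\H{o}s--Gallai inequality for $\pi$ is violated (or else choose the realization of $\pi'$ extremally so that $u\not\sim w$ can be forced). Checking one configuration (a clique plus isolated vertices) does not establish the general implication, and in the known proofs this point requires either a genuine structural argument or a careful choice of which two entries to decrement. Until Step 1's case analysis and Step 2's switch-existence claim are actually proved, what you have is a sound plan rather than a proof.
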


\begin{thm}[Kleitman and Wang \cite{KleitWang}]\label{thm:KW}
Let $\pi = (d_1, \ldots, d_n)$ be a nonincreasing sequence of nonnegative integers, and let $i \in [n]$. If $\pi_i$ is the sequence defined by
\[
\pi_i =
\begin{cases}
(d_1 - 1, \ldots, d_{d_i}-1, d_{d_i+1}, \ldots, d_{i-1}, d_{i+1}, \ldots, d_n) & \text{ if } d_i < i \\
(d_1-1, \ldots, d_{i-1}-1, d_{i+1}-1, \ldots, d_{d_i+1}-1, d_{d_i+2}, \ldots, d_n) & \text{ if } d_i \geq i,
\end{cases}
\]
then $\pi$ is graphic if and only if $\pi_i$ is graphic.
\end{thm}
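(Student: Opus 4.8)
The plan is to establish both implications after unwinding the hypotheses. Since $\nabla$ is non-decreasing in the index and $\nabla_{\alpha+1}\ge 1$, the quantity $2i-\nabla_i$ is largest, over $i\in\{\alpha+1,\dots,k\}$, at $i=\alpha+1$, with value at most $2\alpha+1$; hence $2i^*-\nabla_{i^*}=2\alpha+1$ forces $\nabla_{\alpha+1}=1$ and $i^*=\alpha+1$. Consequently $\widetilde\sigma(H)=2(k-\alpha-1)$ and
\[
\wtilpi_{\alpha+1}(H,n)=\big((n-1)^{k-\alpha-1},\,(k-\alpha-1)^{\,n-k+\alpha+1}\big)\in\mathcal P(H,n),
\]
whose unique realization is the complete split graph $K_{k-\alpha-1}\vee\overline K_{n-k+\alpha+1}$. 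We may assume $k\ge\alpha+2$ (if $k=\alpha+1$ the hypotheses force $H=K_2\cup\overline K_{\alpha-1}$, for which the statement is checked directly under the usual convention for the empty join). Put $c:=k-\alpha-2\ge 0$, so $K_{k-\alpha(H)-2}=K_c$.

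\textbf{Necessity (contrapositive).} Suppose $H\not\subseteq K_c\vee S_{b_1,b_2}$ for every $b_1+b_2=\alpha$. For large $n$ set $m:=n-c-2$, choose $p,q$ with $p+q=m+2$ and $|p-q|\le 1$, and let
\[
\pi^*_n:=\big((n-1)^{c},\,p+c,\,q+c,\,(k-\alpha-1)^{m}\big),
\]
which is non-increasing and graphic (realize it with the $c$ terms $n-1$ as a universal clique $D$, two vertices $u,v$ of degrees $p+c,q+c$ joined to each other and to $p-1$ resp.\ $q-1$ further vertices, and the remaining $m$ vertices joined to $D$ and to exactly one of $u,v$). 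A direct computation gives $\sigma(\pi^*_n)=(k-\alpha-1)(2n-k+\alpha)=\sigma(\wtilpi_{\alpha+1}(H,n))=\widetilde\sigma(H)n-O(1)$, so $\sigma(\pi^*_n)\ge\sigma(H,n)-\delta n$ for all large $n$ by Theorem~\ref{FLMW}; and since $\pi^*_n$ has two terms of size $\approx n/2$ while every $\pi'\in\mathcal P(H,n)$ has only $(n-1)$-terms and bounded terms, $\|\pi^*_n-\pi'\|\ge n/3$ for all large $n$. It remains to show $\pi^*_n$ is not potentially $H$-graphic. In any realization $G$, the $c$ vertices of degree $n-1$ form a dominating clique $D$; deleting $D$ lowers the remaining degrees by $c$, so $G-D$ has degree sequence $(p,q,1^{m})$. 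If the two vertices of degrees $p,q$ in $G-D$ were non-adjacent they would need $p+q=m+2$ distinct private leaf-neighbours among only $m$ leaves, which is impossible; hence they are adjacent, and their leaf-neighbours (of sizes $p-1$ and $q-1$) exhaust all $m$ leaves, so $G-D\cong S_{p-1,q-1}$ and $G\cong K_c\vee S_{p-1,q-1}$. Now suppose $H$ embeds into $K_c\vee S_{p-1,q-1}$, with a set $A$ of vertices mapped into $D$ and the complementary set $B$ into the double star; then $|B|\ge k-c=\alpha+2$. The only non-leaves of a double star are its two centres, so the induced subgraph on the image of $B$ is edgeless, a star plus isolated vertices, or a double star, according as it contains $0$, $1$, or $2$ centres. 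In the first two cases $\alpha(H)\ge\alpha(H[B])\ge|B|-1>\alpha$, a contradiction; in the third, $\alpha(H[B])\ge|B|-2\ge\alpha$ forces $|B|=\alpha+2$, $|A|=c$, and $H[B]\subseteq S_{s_1,s_2}$ with $s_1+s_2=\alpha$, whence $H\subseteq K_c\vee S_{s_1,s_2}$, contradicting our assumption. Thus no realization of $\pi^*_n$ contains $H$, and taking $\epsilon=1/3$ shows $H$ is not $\sigma$-stable.

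\textbf{Sufficiency.} Now assume $H\subseteq K_c\vee S_{b_1,b_2}$ for some $b_1+b_2=\alpha$. Let $\pi$ be graphic of length $n$ with $\sigma(\pi)\ge\sigma(H,n)-\delta n$ and not potentially $H$-graphic; we must show $\pi$ is $\epsilon n$-close to $\mathcal P(H,n)$. Running the structural analysis underlying Theorem~\ref{thm:MainLow} together with the technical lemmas of Section~\ref{sec:tech} should yield the dichotomy: either (i) $\pi$ is $o(n)$-close to some $\wtilpi_i(H,n)\in\mathcal P(H,n)$, in which case we are done; or (ii) $\pi$ is \emph{split-like}, meaning at most $c$ of its terms are within $o(n)$ of $n$, while its degree sum $2(k-\alpha-1)n-o(n)$ leaves $\Omega(n)$ of excess degree, over and above the baseline value $k-\alpha-1$, to be carried by vertices that are neither within $o(n)$ of $n$ nor equal to the baseline. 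In case (ii) we reach a contradiction by constructing a realization of $\pi$ that contains $H$: in $K_c\vee S_{b_1,b_2}$ only the $c$ clique vertices and the at most two centres of $S_{b_1,b_2}$ have degree exceeding $k-\alpha-1$, so $H$ has at most $c+2$ vertices of ``large'' degree, and the split-like profile supplies enough high-degree vertices to fill those roles, while the $n-O(1)$ baseline vertices (each of small degree $k-\alpha-1$) are, by Theorems~\ref{thm:ErdGal} and~\ref{thm:KW}, flexible enough to be arranged so that $c$ designated high-degree vertices form a clique joined to two more designated vertices and to $\alpha$ baseline vertices, those two vertices are adjacent and carry $b_1$ and $b_2$ of those $\alpha$ baseline vertices as further neighbours, and all remaining degree demands are absorbed among the remaining baseline vertices; this realization contains $K_c\vee S_{b_1,b_2}\supseteq H$, the desired contradiction. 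Hence only (i) occurs, $\pi$ is $o(n)$-close to $\mathcal P(H,n)$, and $H$ is $\sigma$-stable.

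\textbf{Main obstacle.} The crux is case (ii) of the sufficiency argument: extracting from Theorem~\ref{thm:MainLow}'s machinery a clean description showing that the only near-extremal, non-$H$-graphic sequences are the $\wtilpi_i$-like ones and the split-like ones, and then verifying -- by careful degree bookkeeping with Erd\H os--Gallai and Kleitman--Wang, using that $H$ has few high-degree vertices and that $\pi$'s baseline part is both plentiful and of small degree -- that every split-like near-extremal sequence admits a realization containing $K_c\vee S_{b_1,b_2}$. In the necessity direction the delicate point is the choice $p+q=m+2$: it is exactly what forces the two high-degree vertices of $G-D$ to be adjacent and forbids any leaf--leaf edge, pinning every realization of $\pi^*_n$ to the single graph $K_c\vee S_{p-1,q-1}$; a weaker choice such as $p+q=m$ would allow realizations of the form $K_c\vee(S_{p',q'}\sqcup K_2)$, from which $H$ might be recovered even when $H\not\subseteq K_c\vee S_{b_1,b_2}$, defeating the argument.
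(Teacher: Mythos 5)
Your proposal does not address the statement at hand. The statement is Theorem~\ref{thm:KW}, the Kleitman--Wang theorem: for a nonincreasing sequence of nonnegative integers $\pi=(d_1,\ldots,d_n)$ and an index $i$, laying off the term $d_i$ (subtracting $1$ from the $d_i$ largest other terms) produces a sequence $\pi_i$ that is graphic if and only if $\pi$ is. This is a classical result that the paper cites from \cite{KleitWang} rather than reproving; a proof would proceed by an edge-exchange (2-switch) argument showing that any realization of $\pi$ can be modified so that the vertex of degree $d_i$ is adjacent to the $d_i$ highest-degree vertices other than itself, whence deleting it realizes $\pi_i$, and conversely by adding back a vertex joined to those positions. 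Nothing in your write-up engages with this: you never consider an arbitrary nonincreasing integer sequence, the lay-off operation, or the equivalence of graphicality between $\pi$ and $\pi_i$.

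What you have written is instead an attempted proof of the stability characterization in Theorems~\ref{thm:MainHigh} and~\ref{thm:NotStable} (necessity via a sequence realized only by $K_{k-\alpha-2}\vee S_{b_1,b_2}$, sufficiency via a structural dichotomy). Even judged against that target, the sufficiency direction is only a plan: the ``split-like'' dichotomy is asserted (``should yield''), not proved, whereas the paper derives the needed structure from Lemma~\ref{Const} (conclusion (iii) gives a realization containing $K_{k-\alpha-1}\vee\overline K_{\alpha+1}$) and then completes the argument with explicit edge exchanges (Claims~\ref{claim:maxdeg} and~\ref{claim:fewbigdeg}) rather than the Erd\H{o}s--Gallai bookkeeping you invoke. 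So there are two problems: the proposal proves the wrong statement, and even for that other statement it leaves the key step unestablished. To answer the question actually posed, you need an argument about laying off a term from an integer sequence, independent of anything concerning $\sigma$-stability.
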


The process of deleting the term $d_i$ from the graphic sequence in the Kleitman-Wang Theorem (Theorem \ref{thm:KW}) and reducing the remaining terms accordingly is called {\it laying off} $d_i$.

The Kleitman-Wang Theorem has many useful consequences, some of which we list in the next result. 
For a graph $G$, let $\mathcal{D}^{(t)}(G)$ denote the family of subgraphs of $G$ that can be obtained by deleting exactly $t$ vertices from $G$. Alternatively, this is the family of induced subgraphs of $G$ with order $|V(G)|-t$. 
We say that a graphic sequence $\pi$ is potentially $\mathcal{D}\s{t}(G)$-graphic if there is a realization of $\pi$ that contains some graph in $\mathcal{D}\s{t}(G)$. 

\begin{cor}\label{KWCor}
Let $\pi_j$ be the sequence obtained from $\pi=(d_1, \ldots, d_n)$ by laying off the term $d_j$.
The following statements are true:
\begin{enumerate}
\item[(i)] There is a realization of $\pi$ in which a vertex of degree $d_j$ is adjacent to the $d_j$ vertices of highest degree, other than itself. 
\item[(ii)] If $\pi_j$ is potentially $H$-graphic, then $\pi$ is potentially $H$-graphic. 
\item[(iii)] If $\pi=(n-1, d_2, \ldots, d_n)$, then $\pi_1=(d_2-1, \ldots, d_n-1)$ is potentially $G$-graphic if and only if $\pi$ is potentially $(K_1\vee G)$ graphic.
Therefore $\pi$ is potentially $H$-graphic if and only if $\pi_1$ is potentially $\mathcal{D}^{(1)}(H)$-graphic.  
\end{enumerate}
\end{cor}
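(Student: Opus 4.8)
The plan is to obtain all three statements as consequences of the Kleitman--Wang Theorem (Theorem~\ref{thm:KW}), by observing that the lay-off operation on sequences lifts to an operation on graphs which both extends realizations and, when the laid-off term equals $n-1$, produces a universal vertex.

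For (i) and (ii): since $\pi$ is graphic, Theorem~\ref{thm:KW} gives that $\pi_j$ is graphic, so I fix any realization $G'$ of $\pi_j$ and label its vertices $u_1,\dots,u_{n-1}$ so that $\deg_{G'}(u_\ell)$ is the $\ell$-th term of $\pi_j$ written in the order appearing in Theorem~\ref{thm:KW}. In both cases of that theorem the first $d_j$ terms of $\pi_j$ are precisely the decremented ones, so forming $G$ from $G'$ by adding a vertex $v$ adjacent to $u_1,\dots,u_{d_j}$ yields a realization of $\pi$: each of $u_1,\dots,u_{d_j}$ regains one unit of degree and $v$ gets degree $d_j$. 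Because $\pi$ is nonincreasing, $\{u_1,\dots,u_{d_j}\}$ is a set of $d_j$ vertices of highest degree other than $v$ (namely $v_1,\dots,v_{d_j}$ if $d_j<j$, and $v_1,\dots,v_{j-1},v_{j+1},\dots,v_{d_j+1}$ if $d_j\ge j$), which proves (i). For (ii), I note that this construction applies to an \emph{arbitrary} realization $G'$ of $\pi_j$ and always produces $G\supseteq G'$; so if some realization of $\pi_j$ contains $H$, the corresponding $G$ realizes $\pi$ and contains $H$.

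For (iii): laying off $d_1=n-1$ falls in the case $d_i\ge i$ and produces exactly $\pi_1=(d_2-1,\dots,d_n-1)$, and the add-back of part (i) now attaches a \emph{universal} vertex. Hence if $\pi_1$ is potentially $G$-graphic, a realization $G'\supseteq G$ of $\pi_1$ extends to a realization $R$ of $\pi$ in which a vertex $w$ is joined to all of $V(G')$, so $R[\{w\}\cup V(G)]\supseteq K_1\vee G$. Conversely, let $R$ realize $\pi$ and contain a copy of $K_1\vee G$ with apex $z$ and with the $G$-part spanning a set $S$ (so $z\notin S$), and let $w$ be a vertex of $R$ of degree $n-1$, adjacent to everything, so that $R-w$ realizes $\pi_1$. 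If $w\notin S$ then $(R-w)[S]\supseteq G$; if $w\in S$, then since $z\ne w$, $z\notin S$, and $z$ is adjacent to every vertex of $S\setminus\{w\}$, the vertex $z$ may be used in place of $w$ in the copy of $G$, giving $(R-w)[(S\setminus\{w\})\cup\{z\}]\supseteq G$. Either way $\pi_1$ is potentially $G$-graphic, which establishes the equivalence. The final assertion then follows by combining this equivalence with the elementary fact that $H\subseteq K_1\vee(H-v)$ for every $v\in V(H)$: if $\pi_1$ is potentially $\mathcal{D}^{(1)}(H)$-graphic it is potentially $(H-v)$-graphic for some $v$, so $\pi$ is potentially $(K_1\vee(H-v))$-graphic and hence potentially $H$-graphic; conversely, if $R\supseteq H$ realizes $\pi$ with universal vertex $w$, then deleting $w$ from the copy of $H$ (or, if $w$ misses that copy, from $R$) exhibits a member of $\mathcal{D}^{(1)}(H)$ inside $R-w$, which realizes $\pi_1$.

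The one genuinely delicate point I anticipate is the case $w\in S$ in the converse direction of the equivalence in (iii): one must notice that because $w$ has degree $n-1$ it is adjacent to every vertex of $S\setminus\{w\}$, so the apex vertex of the embedded $K_1\vee G$ (which is automatically distinct from $w$ and lies outside $S$) can absorb $w$'s role inside the copy of $G$. Everything else is routine bookkeeping built on top of Theorem~\ref{thm:KW}.
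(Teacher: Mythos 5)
Your proof is correct, and it follows the route the paper intends: the corollary is stated without proof as a direct consequence of the Kleitman--Wang Theorem, and your ``add back a vertex joined to the decremented positions'' construction (specializing to a universal vertex when $d_1=n-1$, with the apex-substitution argument handling the case $w\in S$) is exactly the standard derivation left implicit there. No gaps.
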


Part (i) of Corollary \ref{KWCor} guarantees the existence of a realization of $\pi$ in which the vertex of maximum degree, $d_1$, is adjacent to the next $d_1$ vertices of highest degree. 
Following terminology from~\cite{FHM}, we call such a realization a {\it canonical realization} of $\pi$.

A central part of the proofs of Theorems \ref{thm:MainLow} and \ref{thm:MainHigh} is the repeated laying off of terms from a graphic sequence.
The following lemma captures a few important facts about this process.  

\begin{lem}\label{lemma:iterated_KW}
Let $\pi$ be a graphic sequence of length $n$ with $\sigma(\pi)\ge mn$ and let $\pi_j$ be the graphic sequence of length $n-j$ obtained by iteratively laying off $j$ terms that are less than $\frac{m}{2}$, if this is possible.  The following hold:

\begin{enumerate}
\item[(i)] $\sigma(\pi_j)\ge m(n-j)+(m-2\CL{\frac m2}+2)j$,
\item[(ii)] the sum of the laid-off terms is at most $j(\left\lceil\frac{m}{2}\right\rceil-1)$.  
\end{enumerate}
\end{lem}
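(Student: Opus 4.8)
The plan is to track how $\sigma$ changes under a single lay-off step and then sum over the $j$ steps. Suppose we are at some intermediate sequence of length $n'$ (starting with $n' = n$) and we lay off a term $d$ with $d < \tfrac m2$, i.e.\ $d \le \lceil \tfrac m2\rceil - 1$. In the Kleitman--Wang operation, we remove the single term $d$ (which contributes $d$ to the sum) and we decrease exactly $d$ other terms by $1$ each. Hence the sum drops by exactly $2d$: precisely $d$ from the removed term and $d$ from the unit decreases. Since $d \le \lceil \tfrac m2\rceil - 1$, each step decreases $\sigma$ by at most $2\lceil \tfrac m2\rceil - 2$, and the removed term itself is at most $\lceil \tfrac m2\rceil - 1$. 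Statement (ii) then follows immediately by summing the removed terms over the $j$ steps: the total of the laid-off terms is at most $j(\lceil \tfrac m2\rceil - 1)$.

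For statement (i), I would argue by induction on $j$. After $j$ lay-offs the length is $n - j$ and, since each step decreased the sum by at most $2\lceil\tfrac m2\rceil - 2$, we have $\sigma(\pi_j) \ge \sigma(\pi) - j(2\lceil\tfrac m2\rceil - 2) \ge mn - j(2\lceil \tfrac m2\rceil - 2)$. It remains to rewrite this in the claimed form: $mn - j(2\lceil\tfrac m2\rceil - 2) = m(n-j) + mj - 2j\lceil\tfrac m2\rceil + 2j = m(n-j) + (m - 2\lceil\tfrac m2\rceil + 2)j$, which is exactly the bound in (i). (Note $m - 2\lceil \tfrac m2\rceil + 2$ equals $2$ if $m$ is even and $1$ if $m$ is odd, so this correction term is nonnegative and the bound is genuinely useful.)

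The one point requiring a little care — and the place where I expect the only real subtlety — is justifying that the per-step sum decrease is \emph{exactly} $2d$ rather than merely at most something. One must confirm that in both cases of the Kleitman--Wang definition (the case $d < i$ and the case $d \ge i$, where $i$ is the position of the laid-off term), precisely $d$ terms are decremented by $1$ and the term $d_i$ is deleted, so that the total change is deleting $d_i = d$ and subtracting $1$ from $d$ entries, for a net change of $-2d$. This is a direct reading of Theorem~\ref{thm:KW}, but it is the crux: the rest is the bookkeeping above. One should also note that the hypothesis ``if this is possible'' simply means we assume at each stage such a small term exists to lay off; no claim is made that it is always possible, so there is nothing further to verify there.
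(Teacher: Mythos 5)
Your proposal is correct and follows essentially the same argument as the paper: observe that a term $d<\frac m2$ satisfies $d\le\lceil\frac m2\rceil-1$, that laying off $d$ decreases the sum by exactly $2d$, and then sum over the $j$ steps and rearrange. The extra care you take in checking both cases of the Kleitman--Wang operation is sound but the paper treats it as immediate.
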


\begin{proof}
First observe that the greatest integer less than $\frac m2$ is $\CL{\frac m2}-1$.
When the term $d_i$ is laid off from $\pi$, the sum of the resulting sequence is $\sigma(\pi)-2d_i$.
Therefore, when we iteratively lay off $j$ terms, each of which is at most $\CL{\frac m2}-1$, then the sum of the terms that were laid off is at most $j(\CL{\frac m2}-1)$, and we obtain a sequence $\pi_j$ satisfying
\begin{equation*}
\sigma(\pi_j)\ge mn-2j\left(\left\lceil\frac{m}{2}\right\rceil-1\right)=m(n-j)+\left(m- 2\CL{\frac m2}+2\right)j.\qedhere
\end{equation*}
\end{proof}

\begin{lem}\label{PotDel2}
If $H$ is a graph of order $k$ and $t < k-\alpha(H)$, then there is an $F \in \mathcal{D}\s{t}(H)$ such that $\widetilde\sigma(F)\le\widetilde\sigma(H)-2t$.
Therefore, for such a graph $F$ and any $\delta> 0$, if $n$ is sufficiently large, then
$$\sigma(F,n)\le \sigma(H,n)-2tn+\delta n.$$
\end{lem}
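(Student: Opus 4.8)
The plan is to prove the two assertions of the lemma in order: first exhibit an induced subgraph $F$ of $H$ on $k-t$ vertices with $\widetilde\sigma(F)\le \widetilde\sigma(H)-2t$, and then deduce the bound on $\sigma(F,n)$ by invoking the asymptotic formula of Ferrara, LeSaulnier, Moffatt, and Wenger (Theorem~\ref{FLMW}). The second step is essentially immediate once the first is in hand: Theorem~\ref{FLMW} gives $\sigma(F,n)=\widetilde\sigma(F)n+o(n)$, so if $\widetilde\sigma(F)\le\widetilde\sigma(H)-2t$ then $\sigma(F,n)\le(\widetilde\sigma(H)-2t)n+o(n)=\widetilde\sigma(H)n-2tn+o(n)$; since also $\sigma(H,n)=\widetilde\sigma(H)n+o(n)$, for any fixed $\delta>0$ and all sufficiently large $n$ we get $\sigma(F,n)\le\sigma(H,n)-2tn+\delta n$. (One must absorb both $o(n)$ error terms into the slack $\delta n$, which is fine since $t$, and hence $\widetilde\sigma$, depend only on $H$.)

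The heart of the argument is therefore the combinatorial claim about $\widetilde\sigma$. First I would recall that $\widetilde\sigma(H)=\max_{i}\{2(k-i)+\nabla_i(H)-1\}$, with the max over $i\in\{\alpha(H)+1,\dots,k\}$, and that for $F=\mathcal D\s{t}(H)$ of order $k'=k-t$ the analogous quantity is $\widetilde\sigma(F)=\max_{i}\{2(k'-i)+\nabla_i(F)-1\}$, with the max over $i\in\{\alpha(F)+1,\dots,k'\}$. The key observation is that $\nabla_i$ is computed over \emph{induced} subgraphs, and any induced subgraph of $F$ is also an induced subgraph of $H$; so if $F$ itself is chosen as an induced subgraph of $H$, then for each valid index $i$ we have $\nabla_i(F)\ge\nabla_i(H)$ is the wrong direction — we actually need an upper bound on $\nabla_i(F)$, which requires choosing $F$ carefully so that the minimizing induced subgraph (on $i$ vertices) witnessing $\nabla_i(H)$ survives inside $F$.

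So the real content is: choose $t$ vertices to delete from $H$ so that, for the index $i$ achieving $\widetilde\sigma(F)$, the optimal $i$-vertex induced subgraph for $\nabla_i$ is still present. The clean way is to track the index. Let $j^*$ be an index attaining $\widetilde\sigma(H)=2(k-j^*)+\nabla_{j^*}(H)-1$, and let $F_0$ be an induced subgraph of $H$ on $j^*$ vertices with $\Delta(F_0)=\nabla_{j^*}(H)$; note $j^*\ge\alpha(H)+1$, so since $t<k-\alpha(H)\le k-\alpha(H)\le$ we have... I would check that $k-t>\alpha(H)\ge\alpha(F)$ guarantees the index ranges are nonempty and that we may take $F$ to contain $F_0$ when $j^*\le k-t$; when $j^*>k-t$ one instead deletes vertices from outside a smaller witnessing set and argues via monotonicity that $2(k-t-i)+\nabla_i-1$ has dropped by at least $2t$. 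The main obstacle is exactly this case analysis on where $j^*$ sits relative to $k-t$ and making sure that in every case the chosen $F$ has $\widetilde\sigma(F)\le\widetilde\sigma(H)-2t$ simultaneously for \emph{all} indices $i$ in $F$'s range, not just the extremal one — i.e., ruling out that deleting vertices could \emph{raise} $\widetilde\sigma$ by creating a new extremal index with a large $\nabla_i$. I expect this to follow because deleting one vertex decreases $k$ by $1$ (costing $2$ in the $2(k-i)$ term) while $\nabla_i$ for the surviving induced subgraphs can only decrease or stay the same, so $\widetilde\sigma$ drops by at least $2$ per deletion provided we never delete a vertex that is essential to the (then-current) extremal witness; a short greedy/inductive argument on $t$, deleting one carefully chosen vertex at a time, should make this rigorous.
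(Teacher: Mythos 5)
Your handling of the second assertion (deducing the bound on $\sigma(F,n)$ from Theorem~\ref{FLMW} once $\widetilde\sigma(F)\le\widetilde\sigma(H)-2t$ is known) is correct and matches the paper. The gap is in the first assertion, which your proposal identifies but does not prove. You are right that for an induced subgraph $F$ of $H$ one has $\nabla_j(F)\ge\nabla_j(H)$, since the minimum defining $\nabla_j(F)$ is taken over a subfamily of the witnesses available in $H$; consequently the required inequality $\widetilde\sigma(F)\le\widetilde\sigma(H)-2t$ --- equivalently, $\nabla_j(F)\le 2j-\bigl(2i^*-\nabla_{i^*}(H)\bigr)$ for \emph{every} $j\in\{\alpha(F)+1,\dots,k-t\}$ --- does not follow from monotonicity and hinges entirely on which $t$ vertices are deleted. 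That is the whole content of the lemma, and your proposal leaves it as ``a short greedy/inductive argument \dots should make this rigorous.'' The closing heuristic does not close the gap: preserving a witness for the extremal index $i^*$ only yields $\nabla_{i^*}(F)=\nabla_{i^*}(H)$, which bounds $\widetilde\sigma(F)$ from \emph{below} by $\widetilde\sigma(H)-2t$; the needed upper bound requires controlling $\nabla_j(F)$ at every other index as well, in particular at $j=k-t$ where $\nabla_{k-t}(F)=\Delta(F)$, and for any $j$ all of whose minimizing witnesses meet the deleted set, $\nabla_j(F)$ can strictly exceed $\nabla_j(H)$.

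That this is a genuine obstruction and not a formality is shown already by $H=P_4$ with $t=1$: here $\widetilde\sigma(P_4)=2$, and the two graphs in $\mathcal D\s{1}(P_4)$ (up to isomorphism) are $P_3$, with $\widetilde\sigma(P_3)=1>\widetilde\sigma(P_4)-2$, and $K_2\cup K_1$, with $\widetilde\sigma(K_2\cup K_1)=0$. Both contain a maximum independent set of $P_4$, so the choice of $F$ genuinely matters; note also that $\nabla_3(P_3)=2>1=\nabla_3(P_4)$, so the paper's own justification at this point (choose any $F$ of order $k-t$ containing a maximum independent set of $H$ and assert $\nabla_j(F)\le\nabla_j(H)$) uses the monotonicity in the direction you correctly flagged as wrong. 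Your instinct is therefore sharper than the written proof, but a complete argument must actually exhibit a deletion order --- for instance, by showing how to pick each deleted vertex so that for every surviving index $j$ either a minimizing witness survives or the slack $2j-\nabla_j(H)-\bigl(2i^*-\nabla_{i^*}(H)\bigr)$ absorbs the increase in $\nabla_j$ --- and your sketch, with its unresolved case analysis and ``I expect this to follow,'' does not supply one.
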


\begin{proof}
Since $t < k-\alpha(H)$, we know $\alpha(H) < k-t$.
Thus, there is a graph $F \in \mathcal{D}\s{t}(H)$ with $\alpha(F)=\alpha(H)$, for example a subgraph of $H$ of order $k-t$ that contains a maximum independent set of $H$. For each $j \in \{\alpha(F)+1, \ldots, k-t\}$, we have $\nabla_j(F) \leq \nabla_j(H)$, because every $j$-vertex subgraph of $F$ is a $j$-vertex subgraph of $H$. This means that for each such $j$,
$$2j-\nabla_j(F) \geq 2j-\nabla_j(H) \ge 2i^*(H)-\nabla_{i^*(H)}(H).$$
In particular, we see that $2i^*(F)-\nabla_{i^*(F)}(F) \geq 2i^*(H)-\nabla_{i^*(H)}(H)$. 
By the definition of $\widetilde\sigma$, we have 
\begin{align*}
\widetilde\sigma(F)& = 2(k-t)-(2i^*(F)-\nabla_{i^*(F)}(F))-1 \\
 & \leq  2k-(2i^*(H)-\nabla_{i^*(H)}(H))-1 - 2t\\
 & = \widetilde\sigma(H,n)-2t.
\end{align*}

Let $\delta>0$.
By Theorem~\ref{FLMW}, we know that $\sigma(H,n)=\widetilde \sigma(H)n+o(n)$ and $\sigma(F,n)=\widetilde \sigma(F)n+o(n)$.
Thus there exists $n_{\delta}$ so that if $n>n_{\delta}$, then $\sigma(F,n)<\widetilde\sigma(F)n+\delta n$.
Therefore, for $n>n_\delta$,
\begin{align*}
\sigma(F,n)&<\widetilde \sigma(F)n+\delta n\\
&\le(\widetilde\sigma(H)-2t)n+\delta n\\
&\le \sigma(H,n)-2tn+\delta n.\qedhere
\end{align*}

\end{proof}

Given a graph $H$ with degree sequence $\pi(H)=(h_1, \ldots, h_k)$ and a graphic sequence $\pi=(d_1, \ldots, d_n)$, we say that $\pi$ is {\it degree-sufficient} for $H$ if $d_i \geq h_i$ for each $i$ with $1 \le i \le k$.  We will also require the following result from \cite{FLMW}.

\begin{thm}[The Bounded Maximum Degree Theorem (BMDT)]\label{BMDT}
Let $H$ be a graph of order $k$.
Let $n$ be sufficiently large and let $\pi = (d_1, \ldots, d_n)$ be a nonincreasing graphic sequence that satisfies the following:
\begin{enumerate}
\item[(i)] $\pi$ is degree sufficient for $H$, and 
\item[(ii)] $d_n \geq k-\alpha(H)$.
\end{enumerate}
There exists a function $f = f(\alpha(H),k)$ such that if $d_1 < n-f(\alpha(H),k)$, then $\pi$ is potentially $H$-graphic. 

\end{thm}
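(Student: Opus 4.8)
The plan is to build a realization of $\pi$ in which a copy of $H$ sits on the $k$ highest-degree vertices, and to meet the remaining degree demands with edges that avoid that copy. Write $\pi(H)=(h_1,\dots,h_k)$ and $\alpha=\alpha(H)$, and let $w_1,\dots,w_n$ be the vertex set of the graph to be constructed, with $\deg w_i=d_i$ intended. Fix a maximum independent set $I$ of $H$ and put $C=V(H)\setminus I$, so $|C|=k-\alpha$ and every edge of $H$ meets $C$. Because each vertex of $I$ has at most $|C|=k-\alpha$ neighbours in $H$ while hypothesis (ii) gives $d_i\ge d_n\ge k-\alpha$, and because the $j$-th largest degree among the vertices of $C$ is at most $h_j\le d_j$ by hypothesis (i), we may fix an injection $\phi\colon V(H)\to\{w_1,\dots,w_k\}$ that maps $C$ onto $\{w_1,\dots,w_{k-\alpha}\}$ (in decreasing order of degree), maps $I$ onto $\{w_{k-\alpha+1},\dots,w_k\}$, and satisfies $\deg_H(v)\le\deg(\phi(v))$ for every $v$. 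Let $H'$ be the resulting copy of $H$ on $\{w_1,\dots,w_k\}$. It then suffices to construct a simple graph $G$ on $\{w_1,\dots,w_n\}$ with degree sequence $\pi$ that contains $H'$ as a subgraph.

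Set $r_i=d_i-\deg_H(\phi^{-1}(w_i))$ for $i\le k$ (these are nonnegative by the choice of $\phi$) and $r_j=d_j$ for $k<j\le n$; the multiset $(r_1,\dots,r_n)$ has even sum $\sigma(\pi)-2|E(H)|$ and every entry is at most $d_1<n-f$. A graph $G$ as above exists exactly when the demands $(r_i)$ can be realized by a graph $G'$ on $\{w_1,\dots,w_n\}$ whose edge set avoids the at most $\binom{k}{2}$ pairs in $E(H')$, for then $G=H'\cup G'$ is as required. I would prove the existence of $G'$ by a feasibility argument: one must route $r_i$ units out of each $w_i$ using edges that avoid $E(H')$, with $w_{k+1},\dots,w_n$ also joinable to one another. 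Since $n$ is large, every demand is at most $d_1<n-f$, and $n-O(k)$ vertices carry no restriction, the governing cut (Gale--Ryser-type) inequalities hold with room to spare; the function $f=f(\alpha,k)$ is chosen precisely so that this is so.

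An equivalent, more hands-on route is to first realize $(r_1,\dots,r_n)$ as some graph $G_0$ — which is graphic for $n$ large, again by the Erd\H os--Gallai inequalities (Theorem~\ref{thm:ErdGal}) applied to $\pi$ after decreasing $k$ of its terms — and then to repair it by $2$-switches. While some conflict edge $w_aw_b\in E(G_0)\cap E(H')$ remains, choose distinct $w_c,w_d\in\{w_{k+1},\dots,w_n\}$ with $w_cw_d\in E(G_0)$, $w_aw_c\notin E(G_0)$, and $w_bw_d\notin E(G_0)$, and replace $w_aw_b,w_cw_d$ by $w_aw_c,w_bw_d$; this fixes all degrees, destroys the conflict $w_aw_b$, and creates no new conflict, since each of $w_aw_c,w_bw_d$ has an endpoint outside $\{w_1,\dots,w_k\}$. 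The hypothesis $d_1<n-f$ (with $f>2k$, say) is exactly what forces $w_a$ and $w_b$ to miss enough vertices for such $w_c,w_d$ to exist, and it is also what keeps $\pi$ away from the extremal configurations, which all have a vertex of degree $n-1$.

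The main obstacle is precisely this feasibility/repair step, and it is also where $f(\alpha,k)$ must really be pinned down. It becomes delicate when the low-degree part $w_{k+1},\dots,w_n$ is sparse — many vertices of degree near $k-\alpha$ — because then $G_0$ may have too few edges among $w_{k+1},\dots,w_n$ to supply switch partners, and one must instead build $G'$ so that each image of a vertex of $C$ spends its residual demand on distinct vertices outside $\{w_1,\dots,w_k\}$, or dispose of the bounded list of ``thin'' shapes of $\pi$ by hand (noting that when $k-\alpha$ is small the embedding problem is easy, since an image of a vertex of $I$ imposes essentially no constraint). Carrying out this analysis uniformly, and extracting from it an explicit $f=f(\alpha,k)$, is the technical heart of the argument.
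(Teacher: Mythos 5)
This theorem is not proved in the present paper at all: it is quoted from \cite{FLMW}, where its proof is a fairly long extremal-realization/edge-exchange argument in which the explicit bound $f(k)=\binom{k}{\lfloor k/2\rfloor}(8k^2)$ emerges from counting neighborhood patterns of the $k$ high-degree vertices. Measured against that (or on its own terms), your proposal has a genuine gap: the step you yourself call ``the technical heart'' --- realizing the residual demands $(r_1,\dots,r_n)$ by a graph avoiding $E(H')$, or equivalently the ``feasibility/repair'' step --- is exactly the content of the theorem, and it is not carried out. Two concrete problems. First, the claim that $(r_1,\dots,r_n)$ is graphic ``by the Erd\H{o}s--Gallai inequalities applied to $\pi$ after decreasing $k$ of its terms'' is unjustified: decreasing terms of a graphic sequence, even preserving parity, need not preserve graphicness (e.g.\ $(3,3,3,3)$ is graphic but $(3,3,1,1)$ is not). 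One must show the Erd\H{o}s--Gallai inequalities for $\pi$ have slack at least $2|E(H)|$ at every relevant index, and that is precisely where the hypotheses $d_1<n-f$, $d_n\ge k-\alpha(H)$ and degree-sufficiency have to be exploited; you never do this. Second, the $2$-switch repair is not guaranteed to run: you need an edge $w_cw_d$ with both ends outside $\{w_1,\dots,w_k\}$, $w_c$ nonadjacent to $w_a$ and $w_d$ nonadjacent to $w_b$, but in the realization $G_0$ all edges incident to the low-degree vertices may go to high-degree vertices (and there may be many vertices of large, yet $<n-f$, degree), so the bound $d_1<n-f$ by itself does not ``force $w_a$ and $w_b$ to miss enough vertices'' of the required kind. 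You flag this case but only gesture at handling it (``build $G'$ so that \dots'' / ``dispose of the bounded list of thin shapes by hand''), and you never extract a function $f(\alpha(H),k)$; so the argument, as written, proves the theorem only in the easy regimes and leaves the hard regime --- which is where the theorem earns its keep --- open.

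The overall strategy (plant $H$ on the $k$ largest-degree vertices, with a vertex cover of $H$ on the top $k-\alpha(H)$ positions, then complete by switches) is a reasonable opening and is in the same spirit as the embedding arguments of \cite{FLMW}, but until the graphicness-with-forbidden-edges step is proved with an explicit $f$, this is a plan rather than a proof.
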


For our purposes, it is useful to know that the function $f$ in Theorem~\ref{BMDT} can be chosen as 
\begin{align*}
f(k) & =  \binom{k}{\FL{k/2}}(8k^2).
\end{align*}
While it is possible to further optimize the function by incorporating the mentioned dependence on $\alpha(H)$, that is not necessary here.  

Finally, we will need the following results that allow us to identify potentially $K_k$-graphic sequences and potentially $(K_r \vee \overline{K}_t)$-graphic sequences. 

\begin{thm}[Yin and Li \cite{YinLi}]\label{thm:YinLi}
Let $\pi = (d_1, \ldots, d_n)$ be a nonincreasing graphic sequence and let $k$ be a positive integer.
\begin{itemize}
\item[(i)] If $d_k \geq k-1$ and $d_i \geq 2(k-1) -i$ for $1 \leq i \leq k-2$, then $\pi$ is potentially $K_k$-graphic.
\item[(ii)] If $d_k \geq k-1$ and $d_{2k} \geq k-2$, then $\pi$ is potentially $K_k$-graphic.
\end{itemize}
\end{thm}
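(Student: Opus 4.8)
The plan is to induct on $k$, using the Kleitman--Wang operations of Theorem~\ref{thm:KW} and Corollary~\ref{KWCor} to peel off one vertex of the target clique at a time. For $k\le 2$ both statements are immediate. In the inductive step, first suppose $d_1=n-1$. Then Corollary~\ref{KWCor}(iii) gives that $\pi$ is potentially $K_k$-graphic if and only if $\pi_1=(d_2-1,\dots,d_n-1)$ is potentially $K_{k-1}$-graphic (recall $K_k=K_1\vee K_{k-1}$), so it suffices to check that the hypotheses for parameter $k$ pass to $\pi_1$ with parameter $k-1$. For (i): $(\pi_1)_{k-1}=d_k-1\ge k-2$, and for $1\le i\le k-3$, $(\pi_1)_i=d_{i+1}-1\ge 2(k-1)-(i+1)-1=2(k-2)-i$. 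For (ii): $(\pi_1)_{k-1}=d_k-1\ge k-2$ and $(\pi_1)_{2(k-1)}=d_{2k-1}-1\ge d_{2k}-1\ge k-3$. In both cases the inductive hypothesis closes the case.

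The substance lies in the case $d_1<n-1$. Two routes are available. One may (a) lay off the smallest term $d_n$: this preserves graphicality by Theorem~\ref{thm:KW}, and by Corollary~\ref{KWCor}(ii) it suffices to handle the shorter sequence $\pi_n$, provided its hypotheses for parameter $k$ still hold. Alternatively one may (b) produce directly a realization in which the clique occupies the $k$ vertices $v_1,\dots,v_k$ of largest degree: among all realizations of $\pi$, take one maximizing the number of edges inside $\{v_1,\dots,v_k\}$; if some $v_av_b$ with $a,b\le k$ were a non-edge, then $v_a$ is adjacent to at most $k-2$ of $v_1,\dots,v_k$, so since $d_a\ge d_k\ge k-1$ it has a neighbor outside $\{v_1,\dots,v_k\}$, and likewise $v_b$; routing a $2$-switch through these outside neighbors (and, if they coincide or are already adjacent, through an auxiliary vertex) adds $v_av_b$ without destroying an interior edge, contradicting maximality. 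Once the clique lies on $\{v_1,\dots,v_k\}$, removing its $\binom{k}{2}$ edges leaves the residual sequence $R=(d_1-(k-1),\dots,d_k-(k-1),d_{k+1},\dots,d_n)$, which must be realized with $\{v_1,\dots,v_k\}$ independent --- a Gale--Ryser-type feasibility question to be settled against the hypotheses via Theorem~\ref{thm:ErdGal}.

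Both hypotheses enter at exactly these points. In (ii), $d_{2k}\ge k-2$ supplies at least $2k$ vertices of degree $\ge k-2$, so that after $v_1,\dots,v_k$ are consumed there remain at least $k$ further vertices of degree $\ge k-2$ into which the residual demand of $v_1,\dots,v_k$ can be absorbed and through which the $2$-switches of (b) can be routed. In (i), the inequalities $d_i\ge 2(k-1)-i$ for $i\le k-2$ force $v_1,\dots,v_{k-2}$ to have large neighborhoods outside any fixed set of size $k$, which is what makes the clique completable --- they provide both the room and the edges needed for the cross-condition on $R$, equivalently for the $2$-switches of (b).

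The main obstacle is making the case $d_1<n-1$ uniform: one must show, under each hypothesis, that the laid-off term $d_n$ (or the outside-neighborhood structure in approach (b)) is tame --- either $d_n$ lies below the threshold that would spoil the hypotheses for $\pi_n$, or $n$ is bounded in terms of $k$ and a direct finite check via Theorem~\ref{thm:ErdGal} and the decomposition $K_k=K_1\vee K_{k-1}$ finishes. Carrying out this bookkeeping, including the parity condition that each intermediate sequence must satisfy to remain graphic, is the delicate part; everything else is routine manipulation of the Erd\H os--Gallai and Kleitman--Wang criteria.
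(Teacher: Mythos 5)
The paper does not actually prove this statement; it is imported verbatim from Yin and Li \cite{YinLi}, so your proposal has to stand on its own. Your dominating-vertex case is fine: when $d_1=n-1$, Corollary~\ref{KWCor}(iii) applies and your arithmetic showing that hypotheses (i) and (ii) descend from $k$ to $k-1$ for $\pi_1$ is correct. But that is the easy case, and the substantive case $d_1<n-1$ is left with a genuine gap that you yourself flag and then defer. Route (a) does not go through as stated: laying off $d_n$ subtracts $1$ from the $d_n$ largest terms and shortens the sequence, so it can destroy exactly the hypotheses you need. For example, $\pi=((k-1)^{2k})$ satisfies (ii), but after one layoff the sequence is $((k-1)^{k},(k-2)^{k-1})$ with only $2k-1$ terms, so neither (ii) nor (i) is available for parameter $k$; sequences meeting (i) with equality misbehave similarly. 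The fallback ``$n$ is bounded in terms of $k$ and a direct finite check finishes'' is not a proof, since $k$ is arbitrary.

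Route (b) is also unsound as written. The maximality-plus-exchange argument you describe uses only $d_k\ge k-1$, and that alone does not imply potential $K_k$-graphicity: for $k=3$ the sequence $(2,2,2,2)$ has $d_3=2$ but its unique realization $C_4$ is triangle-free. In that example the two ``outside neighbors'' of the nonadjacent pair necessarily coincide and no auxiliary vertex exists, which is precisely the configuration your parenthetical waves away; so the $2$-switch cannot always be routed, and hypotheses (i)/(ii) must do real work in this step. Your sketch of how they would enter (``at least $2k$ vertices of degree $\ge k-2$ through which the switches can be routed'', ``large neighborhoods outside any fixed set of size $k$'') is exactly the unproved content, as is the closing Erd\H{o}s--Gallai/Gale--Ryser feasibility check for the residual sequence $R$, which is asserted but never carried out. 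In short, the reduction when a dominating vertex is present is correct, but the core of the theorem --- where Yin and Li's actual argument (an induction supported by a careful Erd\H{o}s--Gallai-type analysis of residual sequences) does its work --- remains a plan rather than a proof.
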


\begin{thm}[Yin \cite{Yin}]\label{yin_complete_split}
A nonincreasing sequence $\pi=(d_1, \ldots, d_n)$ is potentially $(K_r \vee \overline{K}_t)$-graphic if and only if there is a realization of $\pi$ containing a copy of $K_r \vee \overline{K}_t$ such that the vertices of the clique of order $r$ have degrees $d_1, \ldots, d_r$ and the vertices of the independent set of order $t$ have degrees $d_{r+1}, \ldots, d_{r+t}$. 
\end{thm}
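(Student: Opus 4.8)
We only need the forward direction; the converse is immediate. The workhorse is a standard switching fact, which I would record first: \emph{if $G$ realizes $\pi$ and $u,w\in V(G)$ with $d_G(u)\ge d_G(w)$, then $\pi$ has a realization $G'$ with $N_{G'}(u)\supseteq N_G(w)\setminus\{u\}$ that differs from $G$ only in edges incident to $u$ or $w$.} This follows from a batch of simultaneous $2$-switches: with $S=N_G(w)\setminus(N_G(u)\cup\{u\})$ and $T=N_G(u)\setminus(N_G(w)\cup\{w\})$ one computes $|T|-|S|=d_G(u)-d_G(w)\ge 0$, so one may fix an injection $s\mapsto t(s)$ of $S$ into $T$ and, for each $s\in S$, replace the edges $\{w,s\}$ and $\{u,t(s)\}$ by the non-edges $\{u,s\}$ and $\{w,t(s)\}$; these switches are pairwise compatible. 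Two features make the proof run: all changes stay on edges meeting $u$ or $w$, and ``$G$ contains a copy of $K_r\vee\overline K_t$'' means a not-necessarily-induced subgraph, so on the independent-set side no non-edges are required and only the adjacencies to the clique side matter.

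Next, label the vertices $v_1,\dots,v_n$ with $d_G(v_i)=d_i$, and over all pairs $(G,\phi)$, where $G$ realizes $\pi$ and $\phi$ embeds $K_r\vee\overline K_t$ into $G$, choose one minimizing in lexicographic order the pair $\bigl(\sum_{v_j\in A}j,\ \sum_{v_j\in B}j\bigr)$, where $A$ and $B$ are the $\phi$-images of the clique and of the independent set. Such a minimizer exists since $\pi$ is potentially $(K_r\vee\overline K_t)$-graphic. The plan is to show it satisfies $A=\{v_1,\dots,v_r\}$ and $B=\{v_{r+1},\dots,v_{r+t}\}$; as the indices are sorted by degree, that is precisely the theorem.

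To force $A=\{v_1,\dots,v_r\}$: otherwise choose $a\le r$ with $v_a\notin A$ and (by counting) $b>r$ with $v_b\in A$, so $d_a\ge d_b$. Since $v_b$ lies in the clique of the copy, $N_G(v_b)\supseteq(A\setminus\{v_b\})\cup B$, so applying the switching fact to $(u,w)=(v_a,v_b)$ produces a realization $G'$, agreeing with $G$ off the edges at $v_a$ and $v_b$, with $N_{G'}(v_a)\supseteq(A\setminus\{v_b\})\cup(B\setminus\{v_a\})$. Then $G'$ carries a copy of $K_r\vee\overline K_t$ with clique $A^{*}=(A\setminus\{v_b\})\cup\{v_a\}$ and independent part $B$ (when $v_a\notin A\cup B$) or $(B\setminus\{v_a\})\cup\{v_b\}$ (when $v_a\in B$): a short check shows every edge of the relocated copy is present in $G'$ — edges avoiding $v_a$ and $v_b$ are untouched, edges at the incoming vertex $v_a$ are supplied by the fact, and the few edges at the retired vertex $v_b$ (present only in the case $v_a\in B$, where $v_b$ is re-used in the independent part) survive the switches. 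This strictly decreases $\sum_{v_j\in A}j$, contradicting minimality. An entirely analogous swap — between an index $c$ with $r<c\le r+t$ and $v_c\notin B$ and an index $e>r+t$ with $v_e\in B$, now leaving $A$ intact — strictly decreases $\sum_{v_j\in B}j$ while keeping $\sum_{v_j\in A}j$ fixed, which forces $B=\{v_{r+1},\dots,v_{r+t}\}$ and completes the proof.

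The step I expect to demand the most care, and would write out in full, is exactly that ``short check'': verifying that the $2$-switches used to install the incoming vertex's needed adjacencies do not erase any edge of the copy among the vertices being kept. This is where the ``changes confined to $u,w$'' clause of the switching fact, combined with the fact that one of the two swapped vertices is being expelled from the copy, earns its keep. The minor case split ($v_a\in B$ versus $v_a\notin A\cup B$) and the degenerate possibilities $r=0$ or $t=0$ (and checking $n\ge r+t$) are routine.
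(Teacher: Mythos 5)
Your argument is correct, but there is nothing in the paper to compare it against: the paper states this result as a theorem of Yin and cites \cite{Yin} without reproducing any proof, using it later only as a black box (in the proof of Theorem~\ref{thm:MainHigh}). What you have written is a self-contained proof along the standard lines used for the analogous statement about cliques (a potentially $K_k$-graphic sequence has a realization with $K_k$ on the $k$ largest-degree vertices): a degree-preserving batch of $2$-switches that lets a higher-degree vertex $u$ absorb $N_G(w)\setminus\{u\}$ while altering only edges at $u$ and $w$, combined with a lexicographically extremal choice of the pair (realization, embedding). I checked the details and they go through: the count $|T|-|S|=d_G(u)-d_G(w)$ is right, the switches are simultaneously admissible, and in the delicate case you flag --- where the expelled clique vertex $v_b$ is recycled into the independent part --- the needed edges at $v_b$ do survive, because the only edges deleted at $w=v_b$ go to $S=N_G(v_b)\setminus(N_G(v_a)\cup\{v_a\})$, while every vertex the relocated copy still requires $v_b$ to see (each vertex of $A\setminus\{v_b\}$, plus $v_a$ itself via the untouched edge $v_av_b$) lies in $N_G(v_a)\cup\{v_a\}$, since $v_a\in B$ is joined to all of $A$ in the original copy; this is exactly the ``short check'' you promised, and it is the one place where the bare statement of your switching fact must be supplemented by its explicit construction. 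Together with the observation that containment is as a subgraph (so no non-adjacencies are required inside the independent part), and the trivial second stage that fixes $B$ while leaving $A$ untouched, this yields the theorem; if anything, your exchange argument is more elementary and self-contained than relying on Yin's Rao-type characterization.
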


\section{Proofs of Theorems \ref{thm:MainLow} and \ref{thm:MainHigh}}\label{sec:proofs}

Theorem \ref{thm:MainLow} is a relatively immediate corollary of Lemma~\ref{Const} below.   The proof of this lemma builds upon the approach used in \cite{FLMW}, although a different and much more careful analysis of the iterative process defined there is warranted to obtain our results.  Theorem \ref{thm:MainHigh} follows with a little more work. In particular, if $2i^*-\nabla_{i^*}(H)=2\alpha(H)+1$, then Lemma \ref{Const} may result in a realization of $\pi$ containing $K_{k-\alpha(H)-1} \vee \overline{K}_{\alpha(H)+1}$.
In this case, further analysis is necessary to show that $H$ is $\sigma$-stable. 

\begin{lem}\label{Const}
Let $H$ be a graph of order $k$ and independence number $\alpha$. Let $\epsilon$ be given, where $0<\epsilon<\frac 12$.
There exists $n_0= n(\epsilon, H)$ and $\delta >0$ such that for any graphic sequence $\pi$ of length $n \geq n_0$ satisfying $\sigma(H,n)\ge \sigma(\pi) \geq \sigma(H,n) -\delta n$, either
\begin{enumerate}
\item[(i)] $\pi$ is potentially $H$-graphic, 
\item[(ii)] $\| \pi-\pi' \| < \epsilon n$ for some $\pi' \in \mathcal{P}(H,n)$, or 
\item[(iii)] $\pi$ is potentially $(K_{k-\alpha-1} \vee \overline{K}_{\alpha+1})$-graphic.  
\end{enumerate}
\end{lem}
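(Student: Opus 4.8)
The plan is to follow the iterative layoff strategy from \cite{FLMW} but track the degree sequence carefully enough to force one of the three conclusions. Let $m = \widetilde\sigma(H)$, so $\sigma(H,n) = mn + o(n)$ by Theorem~\ref{FLMW}, and suppose $\sigma(\pi) \ge \sigma(H,n) - \delta n$. First I would repeatedly lay off terms of $\pi$ that are smaller than $\frac m2$, as in Lemma~\ref{lemma:iterated_KW}, continuing until either we have removed $n - k'$ terms for some bounded $k'$ (so the surviving sequence has bounded length, which we handle separately) or we reach a sequence $\pi_j$ in which every term is at least $\frac m2$. By Corollary~\ref{KWCor}(ii), it suffices to show the resulting shorter sequence is potentially $H$-graphic, or to translate the other two alternatives back. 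By Lemma~\ref{lemma:iterated_KW}(i), $\sigma(\pi_j) \ge m(n-j) + (m - 2\CL{m/2} + 2)j$, so the average degree of $\pi_j$ stays close to $m$ (with a correction term that is $0$ when $m$ is even and $j$ when $m$ is odd — exactly the parity split that forces the Type~2 case).

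Next I would split on the value of the maximum degree $d_1$ of the reduced sequence. If $d_1 < n - f(k)$ with $f$ as in the BMDT (Theorem~\ref{BMDT}), then since every remaining term is at least $\frac m2 \ge k - \alpha$ (using $m = \widetilde\sigma(H) \ge 2(k-\alpha) - \nabla_{\alpha+1} \cdot$-type bounds, or more directly $\nabla_{i^*}$ arithmetic), and since the sum being close to $mn$ makes $\pi$ degree-sufficient for $H$ after discarding the few large terms, the BMDT gives that $\pi$ is potentially $H$-graphic: conclusion (i). If instead $d_1 \ge n - f(k)$, peel off this near-universal vertex using Corollary~\ref{KWCor}(iii): $\pi$ is potentially $H$-graphic iff $\pi_1$ is potentially $\mathcal{D}^{(1)}(H)$-graphic, and by Lemma~\ref{PotDel2} every member $F$ of $\mathcal{D}^{(1)}(H)$ has $\sigma(F,n) \le \sigma(H,n) - 2n + \delta n$, which is comfortably below $\sigma(\pi_1)$ if $\pi$ had a large surplus. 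Iterating this peeling until no term exceeds $n - f(k)$, we either reduce to a subgraph for which the BMDT applies — landing in (i) — or we exhaust the "budget": the number of near-universal vertices we can strip is exactly controlled by how $\widetilde\sigma$ drops, and when we have stripped $k - \alpha - 1$ of them the residual sequence must look like $\overline{K}_{\alpha+1}$ (up to $\epsilon n$), i.e. $\pi$ is potentially $(K_{k-\alpha-1} \vee \overline{K}_{\alpha+1})$-graphic (conclusion (iii)), or the residual already matched some $\wtilpi_{i^*}(H,n)$ within $\epsilon n$ (conclusion (ii)).

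The structural heart is the bookkeeping in the last sentence: I would maintain, after stripping $t$ near-universal vertices and $j$ small vertices, the invariant that $\sigma(\pi_{t,j}) \ge \widetilde\sigma(\mathcal{D}^{(t)}\text{-witness})\,(n - t - j) - O(\delta n)$ together with an explicit count, via the Erd\H{o}s--Gallai inequalities (Theorem~\ref{thm:ErdGal}), of how many terms can be "large". Comparing this surplus against the $2t$-per-step drop guaranteed by Lemma~\ref{PotDel2} shows that either some intermediate step triggers the BMDT (giving (i)), or we are forced all the way down to $t = k - \alpha - 1$ with the leftover sequence forced to be $\epsilon n$-close to the all-$(k-\alpha-1)$ sequence on $n - (k-\alpha-1)$ terms, which by Theorem~\ref{yin_complete_split} (or directly) is potentially $(K_{k-\alpha-1}\vee\overline{K}_{\alpha+1})$-graphic. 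The small-length leftover case (when layoffs reduce $\pi$ below length $k$) is handled by noting $\sigma(\pi)$ would then be too small, contradicting the hypothesis for $\delta$ small.

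The main obstacle I expect is controlling the case $d_1 \ge n - f(k)$ without losing track of the sum: each peel removes $2d_1 \approx 2n$ from $\sigma$, so the surplus $\sigma(\pi) - mn$ does not obviously survive, and I must instead compare to the \emph{decremented} target $\sigma(\mathcal{D}^{(t)}(H), n-t)$ and show the gap only shrinks by the controlled amount in Lemma~\ref{PotDel2} — keeping the inequality $\sigma(\pi_t) \ge \sigma(F_t, n-t) - \delta n$ alive through every iteration, and arranging the $\epsilon$-$\delta$ dependence so that $\delta$ chosen at the end (depending on $\epsilon$, $k$, and $f(k)$) makes all finitely many comparisons go through for $n$ large. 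Getting the "forced to be close to $\overline{K}_{\alpha+1}$" conclusion quantitatively — i.e. that a length-$(n-t)$ graphic sequence with the right sum and no term exceeding $n - f(k)$ but also not potentially $(K_{k-\alpha-1}\vee\overline{K}_{\alpha+1})$-graphic must be $\epsilon n$-close to an element of $\mathcal{P}(H,n)$ — is the delicate extremal step and is where the bulk of the careful analysis will go.
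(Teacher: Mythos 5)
Your overall architecture is the paper's: an initialization that lays off small terms, an iteration that alternates peeling a near-universal vertex with further layoffs, a halt triggered either by the BMDT degree condition or by exhausting a budget of $k-\alpha-b_H$ peels, and sum comparisons against Lemma~\ref{PotDel2} to certify conclusion~(i) when too much is laid off. Two points in your sketch, however, are genuine gaps rather than routine bookkeeping. First, the assertion that ``the sum being close to $mn$ makes $\pi$ degree-sufficient for $H$ after discarding the few large terms'' is false, and it is exactly at this failure that conclusion~(ii) lives. After $\ell$ peels the surviving sequence has minimum term roughly $k-\ell-\alpha-b_H$, but it may have very few terms as large as $k-\ell-1$; the sum can still be within $\delta n$ of the target because the deficit is concentrated in a bounded number of positions. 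The paper's proof sets $p=\max\{j: d_j^{(\ell)}\ge k-\ell-1\}$ and shows that if $\pi^{(\ell)}$ is not even degree-sufficient for $K_p\vee F_{k-\ell-p}$ (where $F_j$ realizes $\nabla_j(H)$), then $\pi$ is forced within $\epsilon n$ of $\wtilpi_{k-\ell-p}(H,n)$, via a three-term triangle inequality through the degree sequence of $K_\ell\vee G^{(\ell)}$; one must then also verify that $\wtilpi_{k-\ell-p}(H,n)$ actually lies in $\mathcal{P}(H,n)$, which follows from the sum being within $(\epsilon+\delta)n$ of $\sigma(H,n)$ and the leading coefficients being integers. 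You explicitly defer this step (``where the bulk of the careful analysis will go''), but it is the heart of the lemma, not a routine verification, so the proposal as written does not establish conclusion~(ii).

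Second, a smaller but real issue: Corollary~\ref{KWCor}(iii) applies only when $d_1=n-1$ exactly, whereas your peeling step is triggered by $d_1\ge n-f(k)$. The paper first deletes the non-neighbors of the maximum-degree vertex in a canonical realization --- these have degree at most $k-2$ because $d_{2k}\le k-2$ (else Theorem~\ref{thm:YinLi} already gives conclusion~(i)) --- which makes the vertex dominating before it is laid off, and this deletion must be charged against the sum invariant. Relatedly, the budget is $k-\alpha-b_H$ peels, not uniformly $k-\alpha-1$: for Type~1 graphs one peels $k-\alpha$ times and lands in conclusion~(i) via $K_{k-\alpha}\vee\overline{K}_{\alpha}\supseteq H$; only for Type~2 does the count $k-\alpha-1$ and conclusion~(iii) arise. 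Your parity remark about $m-2\CL{m/2}+2$ gestures at this distinction but the two cases need to be separated throughout the iteration (the minimum-degree threshold in the layoff step depends on $b_H$).
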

\medskip
{\bf Proof Sketch:} Before proving Lemma~\ref{Const}, we provide an outline of the argument.
The proof proceeds as follows. We first initialize and then state an algorithm that iterates a process of (a) deleting specific vertices from a realization of the current sequence and taking the degree sequence of the resulting graph, and (b) laying off small terms from the resulting degree sequence.  %We will use ``Stage 1" and "the algorithm" interchangeably going forward, as they are one and the same.  

The algorithm performs an iteration only if the leading term of the current sequence is too large to satisfy the condition on $d_1$ given in Theorem \ref{BMDT}, the BMDT.  In this case, we begin by deleting the nonneighbors of the vertex of maximum degree in a canonical realization of the sequence. This yields a graph with a dominating vertex, and that dominating vertex is then deleted.

The algorithm then takes the degree sequence of the resulting graph and repeatedly lays off small terms so as to raise the minimum term to meet condition (ii) of the BMDT.
If too many small terms are laid off, then we can show that $\pi$ is potentially $H$-graphic, yielding conclusion~(i) of the lemma.  If relatively few terms are laid off and the maximum term is still very large, then the process iterates again.  The algorithm halts when it generates a sequence with a sufficiently small maximum term or when it has iterated a given number of times.

If the algorithm hits its halting condition without first yielding conclusion~(i), we analyze the possible realizations of the resulting sequence.  A consequence of the algorithm is that we have a realization of $\pi$ containing a large complete split graph $S$ of order $(1-o(1))n$, where the vertices of the clique in $S$ are the dominating vertices that are deleted through each iteration of the algorithm.  

 Suppose that there are $\ell$ vertices in the dominating clique of $S$, or equivalently that the algorithm was iterated $\ell$ times.  It therefore suffices to modify the realization of $\pi$ to construct any $(k-\ell)$-vertex induced subgraph of $H$ within the independent set of $S$.  If it is not possible to construct such a graph, then we show that conclusion~(ii) or (iii) follows.\\

{\bf Proof of Lemma \ref{Const}.}  
Let $\delta<\frac{\epsilon}{16k^3+48k^2+(32+\epsilon)k}$.
For sufficiently large $n$, let $\pi=(d_1,\ldots,d_n)$ be a graphic sequence of length $n$ such that $\sigma(\pi) \geq (2(k-i^*)+\nabla_{i^*}-1-\delta)n$.
First note that if $d_{2k}\ge k-1$, then by statement (ii) of Theorem~\ref{thm:YinLi} it follows that $\pi$ is potentially $K_k$-graphic, and hence potentially $H$-graphic.
Therefore for the remainder of the proof we assume that $d_{2k}\le k-2$.
It follows from this assumption that if $\pi'=(d_1',\ldots,d_{n'}')$ is a sequence obtained by iteratively laying terms off of $\pi$, or is the degree sequence of a subgraph of a realization of $\pi$, then $d_{2k}'\le k-2$.

Define
\[
b_H=
\begin{cases}
0 &\text{ if }H\text{ is Type 1}\\
1 &\text{ if }H\text{ is Type 2}.
\end{cases}
\]
Note that, for any choice of $H$, 
$$\CL{\frac{\tilde{\sigma}(H)}{2}}=\CL{\frac{(2(k-i^*)+\nabla_{i^*}-1)}{2}}=\CL{\frac{2k-(2i^*-\nabla_{i^*})-1}{2}}\ge k- \alpha-b_H.$$

Prior to beginning the algorithm described above, we perform an initialization step, in which we apply the Kleitman-Wang Theorem (Theorem~\ref{thm:KW}) to $\pi$ to iteratively lay off terms that are at most $\CL{\frac{\sigma(\pi)}{2n}}-1$, as long as such terms exist.
Let $\pi^{(0)}$ be the resulting sequence, and note that $\pi^{(0)}$ has minimum term at least $\CL{\frac{\sigma(\pi)}{2n}}$.
Since $\delta<\frac{1}{2k}$ and all other terms are integers, 
\begin{align*}
\CL{\frac{\sigma(\pi)}{2n}}&\ge \CL{\frac{(2(k-i^*)+\nabla_{i^*}-1-\delta)}{2}}\\
&= \CL{\frac{(2(k-i^*)+\nabla_{i^*}-1)}{2}}\\
&= \CL{\frac{\tilde \sigma (H)}{2}}\\
&\ge k-\alpha-b_H.
\end{align*}
Thus $\pi^{(0)}$ has minimum term at least $k-\alpha-b_H$, and each term that is laid off in the initialization step has value at most $\CL{\frac{\sigma(\pi)}{2n}}-1=\CL{\frac{\tilde\sigma(H)}{2}}-1$.
\medskip

%\noindent{\bf Stage 1:} Stage 1 consists of an algorithm that generates a collection of graphic sequences, each of which is the degree sequence of an induced subgraph of some realization of $\pi$.  

%to obtain a sequence $\pi_0$ with minimum degree at least $$\left\lceil k-i^*+\frac{\nabla_{i^*}-1-\delta}{2}\right\rceil\ge\left\lceil\frac{\tilde{\sigma}(H)}{2}\right\rceil\ge k-\alpha(H)-b_H,$$ where the first inequality follows from the fact that $\delta$ is sufficiently small.  We do this by repeatedly laying off terms of value at most $\left\lceil\frac{\tilde{\sigma}(H)}{2}-1\right\rceil$, as long as such a term exists. %We claim that this process terminates after laying off at most $\frac{2\delta}{1+\delta}n$ terms from the sequence.

Let $j$ be the number of terms that are laid off to obtain $\pi^{(0)}$.
Since each of these terms is at most $\CL{\frac{\tilde\sigma(H)}{2}}-1$, Lemma~\ref{lemma:iterated_KW} implies that 
\begin{equation}\label{eq:pi0}
\sigma(\pi^{(0)}) \geq (2(k-i^*)+\nabla_{i^*}-1-\delta)(n-j) + (1-\delta)j.
\end{equation} 
If $(1-\delta)j > 2\delta(n-j)$, or equivalently $j>\frac{2\delta}{1+\delta}n$, then
$$\sigma(\pi^{(0)}) \geq (2(k-i^*)+\nabla_{i^*}-1)(n-j) + \delta(n-j) .$$
If we fix $j$ to be the smallest integer that is greater than $\frac{2\delta}{1+\delta}n$, then if $n$ is sufficiently large, so too is $n-j$.  In this case, Theorem \ref{FLMW} implies that the sequence obtained after laying off $j$ terms is potentially $H$-graphic.  By Corollary \ref{KWCor} $\pi$ is also potentially $H$-graphic, yielding conclusion~(i).
Thus, we assume for the rest of the proof that $\pi^{(0)}$ is obtained after laying off at most $\frac{2\delta}{1+\delta}n$ terms.

After the initialization, we perform the iteration described below to create a new sequence, $\pi\s{1}$. Successive iterations create a family of sequences $\pi\s{1}, \ldots, \pi\s{\ell}$ where $\ell$ is in $\{1,\ldots,k-\alpha-b_H-1\}$.
For each $t \ge 0$, let $\pi\s{t}=(d_1^{(t)}, \ldots, d_{n_t}^{(t)})$ be the sequence  that results from the $t^{\rm th}$ iteration of the algorithm,
and let $R_t$ be a canonical realization of $\pi\s{t}$ on the vertex set $\{v_1^{(t)}, \ldots, v_{n_t}^{(t)}\}$ where $d(v_j^{(t)}) = d_j^{(t)}$, as guaranteed by Corollary \ref{KWCor}.
\medskip

\noindent{\bf Algorithm Iteration:} Starting with $t=0$, proceed as follows:
\begin{enumerate}[{Step}~1:]
\item\label{Alg:halt} If $d_1^{(t)}<n_{t}-\binom{k}{\fl{k/2}}(8k^2)$ or $t=k-\alpha-b_H$, then set $\ell=t$ and halt.
Otherwise, proceed to Step~\ref{Alg:deletesmall}.
\item\label{Alg:deletesmall} Remove the non-neighbors of the vertex $v_1^{(t)}$ from $R_t$ to obtain a graph $\widehat R_t$.
Let $\widehat{n_t}$ denote the number of vertices in $\widehat R_t$ and let $\widehat\pi\s{t}= \pi(\widehat R_t)$. %Note that the vertex $v_1\s{t}$ is a dominating vertex in $\widehat R_t$.
\item\label{Alg:deletedom} Lay off the largest term of $\widehat\pi\s{t}$, which is necessarily $\widehat{n_t}-1$, and call the resulting sequence $\widecheck\pi\s{t}$.
\item\label{Alg:itKW} As in the initialization step, repeatedly apply Theorem \ref{thm:KW} to $\widecheck \pi\s{t}$, laying off minimum terms until we obtain a sequence in which each term is at least $$k-i^*+\left\lceil\frac{\nabla_{i^*}-1-(t+1)\delta}{2}\right\rceil-(t+1),$$  which, as $\delta$ is sufficiently small, is at least $$k-(t+1)-\alpha-b_H.$$
If at least $\frac{(t+3)\delta}{1-k\delta}n_t$ terms are laid off from $\widecheck \pi\s t$ in this step, then halt.  In Claim~\ref{claim:step3} we will show that in this case, $\pi$ is potentially $H$-graphic.  Otherwise, let $\pi\s{t+1}$ be the sequence that results from this step, set $t=t+1$, and return to Step~\ref{Alg:halt}.
\end{enumerate}

%Steps (\ref{Alg:halt}) through (\ref{Alg:itKW}) constitute one iteration of the algorithm. 
%The algorithm stops when $t+1= k-\alpha-b_H$, or if after Step (iii) we have $d_1^{(t+1)}<n_{t+1}-\binom{k}{\cl{k/2}}(k^2-1)$. If neither of these happens, we repeat Steps (i) through (iii).  
%When the algorithm halts, set $\ell=t+1$ so that $\pi\s{\ell}=\pi\s{t+1}$. The remainder of the proof is concerned with determining properties of $\pi\s\ell$ and its realizations. 

%Before discussing Stage 2, we give some preliminary analysis of Stage 1 which will be useful as we proceed. First, note the following about the stages of the iterative portion of the Stage 1 algorithm.  

Before beginning a more in-depth analysis of the sequences generated by the algorithm, we give the following observations. 

\begin{itemize}
\item[(a)]\label{s1obs:removed} At most $\binom{k}{\fl{k/2}}(8k^2)$ vertices are removed in each iteration of Step~\ref{Alg:deletesmall}.
This follows because the algorithm halts in Step~\ref{Alg:halt} if $d_1^{(t)}<n_{t}-\binom{k}{\fl{k/2}}(8k^2)$.
\item[(b)]\label{s1obs:bound} Every vertex deleted in Step~\ref{Alg:deletesmall} has degree at most $k-2$.
This follows from Theorem~\ref{thm:YinLi}; the nonneighbors of $v_1^{(t)}$ have degrees $d_{d_1\s t+2}, \ldots,d_{n_t}$, and % vertices of minimum degree, and we have assumed that $d_{2k}\s t\le k-2$.
for sufficiently large $n_t$, $d_1\s t+2\ge n_t-\binom{k}{\FL{k/2}}(8k^2)+2\ge 2k$. Thus, the deleted vertices must have degree at most $k-2$.
\item[(c)]\label{s1obs:conical} After Step~\ref{Alg:deletesmall}, $v_1\s t$ is a dominating vertex in $\widehat R_t$.
Laying off the largest term of $\widehat \pi\s{t}$ in Step~\ref{Alg:deletedom} is equivalent to removing a dominating vertex from $\widehat R_t$, so $\widecheck\pi\s{t}$ is the degree sequence of the graph $\widehat R_t-v_1\s t$.
%\item In Step (iii), we lay off terms to obtain a graphic sequence with minimum degree at least $k-(t+1)-\alpha-b_H$. 
\end{itemize}

Next, we provide some properties of the sequences $\pi\s{t}$ generated by the algorithm that will be useful if we are to show $\pi$ is potentially $H$-graphic.  The first also appears in \cite{FLMW}, so we omit the proof here in the interest of concision.

\begin{claim}\label{Cl:graphbuild} 
For each $t\in\{0,\ldots,\ell\}$, if $G\s{t}$ is a realization of $\pi\s{t}$, then $\pi$ is potentially $K_t \vee G\s t$-graphic.  Consequently, if $\pi\s{t}$ is potentially $\mathcal{D}\s{t}(H)$-graphic, then $\pi$ is potentially $H$-graphic. 
\end{claim}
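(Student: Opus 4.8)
The plan is to prove the first statement by induction on $t$, and then to derive the ``consequently'' part from it together with Corollary~\ref{KWCor}. The base case $t=0$ is trivial: $K_0 \vee G^{(0)} = G^{(0)}$, and $\pi^{(0)}$ is obtained from $\pi$ by iteratively laying off terms, so by part~(ii) of Corollary~\ref{KWCor}, if $G^{(0)}$ is a realization of $\pi^{(0)}$, then $\pi$ is potentially $G^{(0)}$-graphic. (Strictly, one applies Corollary~\ref{KWCor}(ii) once per laid-off term, viewing $G^{(0)}$ as the target graph $H$ throughout.)

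For the inductive step, suppose the claim holds for $t$, and let $G^{(t+1)}$ be a realization of $\pi^{(t+1)}$. I would trace backwards through the one algorithm iteration that produced $\pi^{(t+1)}$ from $\pi^{(t)}$, using the three sub-steps in reverse order. First, $\pi^{(t+1)}$ is obtained from $\widecheck{\pi}^{(t)}$ by laying off small terms (Step~\ref{Alg:itKW}); by repeated application of Corollary~\ref{KWCor}(ii), any realization of $\widecheck{\pi}^{(t)}$ is potentially $G^{(t+1)}$-graphic, i.e. $\widecheck{\pi}^{(t)}$ has a realization $\widecheck{G}^{(t)}$ containing $G^{(t+1)}$ as a subgraph. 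Second, $\widecheck{\pi}^{(t)}$ is obtained from $\widehat{\pi}^{(t)}$ by laying off its largest term $\widehat{n_t}-1$ (Step~\ref{Alg:deletedom}); since that term equals $\widehat{n_t}-1$, Corollary~\ref{KWCor}(iii) applies, and $\widehat{\pi}^{(t)}$ is potentially $(K_1 \vee \widecheck{G}^{(t)})$-graphic, hence potentially $(K_1 \vee G^{(t+1)})$-graphic. Third, $\widehat{\pi}^{(t)}$ is the degree sequence of the graph $\widehat{R}_t$, which is an induced subgraph of $R_t$, a realization of $\pi^{(t)}$; thus $R_t$ itself is a realization of $\pi^{(t)}$ that contains (an induced subgraph realizing $\widehat{\pi}^{(t)}$, hence) a copy of $K_1 \vee G^{(t+1)}$. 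So $\pi^{(t)}$ is potentially $(K_1 \vee G^{(t+1)})$-graphic, say with realizing graph $G^{(t)}$. Now apply the inductive hypothesis to $G^{(t)}$: $\pi$ is potentially $K_t \vee G^{(t)}$-graphic, and since $G^{(t)} \supseteq K_1 \vee G^{(t+1)}$ we get $K_t \vee G^{(t)} \supseteq K_t \vee K_1 \vee G^{(t+1)} = K_{t+1} \vee G^{(t+1)}$, completing the induction.

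For the ``consequently'' clause: if $\pi^{(\ell)}$ is potentially $\mathcal{D}^{(\ell)}(H)$-graphic, then it has a realization $G^{(\ell)}$ containing some $F \in \mathcal{D}^{(\ell)}(H)$. By the first part, $\pi$ is potentially $K_\ell \vee G^{(\ell)}$-graphic, so $\pi$ has a realization containing $K_\ell \vee F$. Since $F$ is obtained from $H$ by deleting $\ell$ vertices, $H \subseteq K_\ell \vee F$, and therefore $\pi$ is potentially $H$-graphic.

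The main obstacle — and the reason the authors defer to \cite{FLMW} — is keeping the bookkeeping clean across the three reverse sub-steps, in particular verifying that Step~\ref{Alg:deletedom} lays off a term that is genuinely the leading term equal to $\widehat{n_t}-1$ (so that the ``$d_i \ge i$'' branch of Kleitman--Wang, i.e. Corollary~\ref{KWCor}(iii), is the relevant one), and confirming that $\widehat{R}_t$ is literally a vertex-deleted subgraph of the canonical realization $R_t$ rather than merely sharing a degree sequence. Both facts are supplied by observations~(b) and~(c) following the algorithm, so once those are invoked the induction is routine.
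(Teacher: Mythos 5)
Your overall strategy --- reverse induction through the algorithm, undoing Step~\ref{Alg:itKW} via Corollary~\ref{KWCor}(ii), Step~\ref{Alg:deletedom} via Corollary~\ref{KWCor}(iii), and Step~\ref{Alg:deletesmall} by passing back to $R_t$ --- is the right skeleton (the paper defers the proof to \cite{FLMW}, but this is the intended argument). However, your third sub-step has a genuine gap. From ``$\widehat\pi\s t$ is potentially $(K_1\vee G\s{t+1})$-graphic'' you may only conclude that \emph{some} realization of $\widehat\pi\s t$ contains $K_1\vee G\s{t+1}$; the particular realization $\widehat R_t$ sitting inside $R_t$ need not be that realization. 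So the inference ``$R_t$ contains an induced subgraph realizing $\widehat\pi\s t$, hence a copy of $K_1\vee G\s{t+1}$'' conflates \emph{potentially} containing a subgraph with \emph{forcibly} containing it, and the conclusion that $R_t$ itself contains $K_1\vee G\s{t+1}$ does not follow. Note also that the removal in Step~\ref{Alg:deletesmall} is a genuine vertex deletion from a specific realization, not a Kleitman--Wang lay-off, so Corollary~\ref{KWCor}(ii) cannot be used to patch this either.

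The statement you need is still true, but it requires a splicing argument. Let $G'$ be a realization of $\widehat\pi\s t$ containing $K_1\vee G\s{t+1}$, and identify $V(G')$ with $V(\widehat R_t)$ by a bijection under which corresponding vertices have equal degrees. Form $R_t'$ from $R_t$ by deleting all edges with both endpoints in $V(\widehat R_t)$ and replacing them with the edges of $G'$, while retaining every edge of $R_t$ incident to a deleted (non-neighbor) vertex. Every vertex of $V(\widehat R_t)$ then has the same degree in $R_t'$ as in $R_t$, and the deleted vertices' degrees are untouched, so $R_t'$ is a realization of $\pi\s t$ that contains $G'$ as an induced subgraph and hence contains $K_1\vee G\s{t+1}$. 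With that repair, the rest of your induction (including the base case, the application of the inductive hypothesis to the specific realization $G\s t = R_t'$, and the ``consequently'' clause via $H\subseteq K_t\vee F$ for $F\in\mathcal D\s t(H)$) goes through.
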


%\begin{proof}[Proof of Claim~\ref{Cl:graphbuild}]
%We proceed by induction on $t$.
%For $t=0$, the result follows from successive applications of Part~\ref{KW:pHg} of Corollary~\ref{KWCor} as $\pi\s{0}$ is obtained by iteratively laying off terms from $\pi$.
%
%
%Let $t\in \{1,\ldots,\ell\}$, and let $G\s t$ be a realization of $\pi\s t$.
%As $\pi\s{t}$ was obtained from $\widecheck\pi\s{t-1}$ by repeatedly laying off terms, Part~\ref{KW:pHg} of Corollary~\ref{KWCor} implies that $\widecheck\pi\s{t-1}$ is potentially $G\s{t}$-graphic. 
%Part~\ref{KW:univ} of Corollary~\ref{KWCor} then implies that $\widehat\pi\s{t-1}$ is potentially $(K_1 \vee G\s{t})$-graphic.  
%Recall that $\widehat\pi\s{t-1}$ is the degree sequence of a graph obtained by deleting vertices from a realization of $\pi\s {t-1}$.
%Thus each realization of $\widehat \pi\s {t-1}$ is a subgraph of a realization of $\pi \s {t-1}$, so it follows that $\pi \s {t-1}$ is also potentially $K_1\vee G\s{t}$-graphic.
%The claim then follows by induction.
%\end{proof}

The next claim gives a reasonably large lower bound on $\sigma(\pi\s t)$.

\begin{claim}\label{Cl:SumIter} 
$\sigma(\pi\s{t}) \geq (2(k-i^*)+\nabla_{i^*}(H) -1- (t+1)\delta -2t)n_t$.  
\end{claim}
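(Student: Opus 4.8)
The plan is to prove the bound by induction on $t$, tracking the sum through the four steps of one iteration of the algorithm. The base case $t=0$ is essentially \eqref{eq:pi0}: the bound $\sigma(\pi^{(0)})\ge (2(k-i^*)+\nabla_{i^*}-1-\delta)(n-j)+(1-\delta)j$, together with $n_0 = n-j$ and the fact that the additional $(1-\delta)j$ term is nonnegative, gives $\sigma(\pi^{(0)})\ge (2(k-i^*)+\nabla_{i^*}-1-\delta)n_0$, which matches the claimed inequality at $t=0$ since there the extra $-2t$ vanishes and $(t+1)\delta = \delta$.

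For the inductive step, I would assume the bound for $\pi\s t$ and carefully account for each operation taking $\pi\s{t}$ to $\pi\s{t+1}$. In Step~\ref{Alg:deletesmall} we remove at most $\binom{k}{\fl{k/2}}(8k^2)$ vertices, each of degree at most $k-2$ by observation~(b); removing a vertex of degree $d$ decreases the sum by $2d \le 2(k-2)$, so this step costs at most $2(k-2)\binom{k}{\fl{k/2}}(8k^2)$, a constant. In Step~\ref{Alg:deletedom} we remove a dominating vertex of $\widehat R_t$ (observation~(c)), which has degree $\widehat{n_t}-1$, so the sum drops by $2(\widehat{n_t}-1)$; since this single removal raises every remaining degree by $1$ being deleted, the net effect on the sum is $\sigma(\widecheck\pi\s t) = \sigma(\widehat\pi\s t) - 2(\widehat{n_t}-1)$, and crucially $\widehat{n_t} = n_t - (\text{vertices removed in Step 2}) \le n_t$, while on the other hand $\widehat{n_t}-1 \le n_{t+1} + (\text{terms laid off in Step 4})$. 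In Step~\ref{Alg:itKW} we lay off fewer than $\frac{(t+3)\delta}{1-k\delta}n_t$ terms (otherwise the algorithm would have halted), each of value less than the threshold $k-i^*+\CL{\frac{\nabla_{i^*}-1-(t+1)\delta}{2}}-(t+1)$, which is at most $\CL{\tilde\sigma(H)/2}$-ish; by Lemma~\ref{lemma:iterated_KW}(ii) each laid-off term is bounded, so this step costs at most roughly $\tilde\sigma(H)\cdot\frac{(t+3)\delta}{1-k\delta}n_t$. Combining, and using $n_{t+1} = \widehat{n_t} - 1 - (\text{Step 4 layoffs})$, the per-iteration loss in the \emph{leading coefficient} is at most $2$ (from deleting the dominating vertex, which shortens the sequence by one while removing the "extra" degree) plus an $O(\delta)n_t$ error, and the bookkeeping gives precisely the increment from $(t+1)\delta + 2t$ to $(t+2)\delta + 2(t+1)$ in the claimed bound, provided $\delta$ is small enough that the constant losses from Step~\ref{Alg:deletesmall} are absorbed into the $\delta n_t$ slack (this is where the explicit choice $\delta < \frac{\epsilon}{16k^3+48k^2+(32+\epsilon)k}$ and the lower bound $n \ge n_0$ enter).

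The main obstacle I anticipate is the precise accounting in Step~\ref{Alg:deletedom}: deleting the dominating vertex $v_1\s t$ of $\widehat R_t$ removes a vertex whose degree is $\widehat{n_t}-1$, which is close to $n_t$, so naively this looks like it could cost $\sim 2n_t$ — far too much. The resolution is that laying off a dominating vertex simultaneously decrements the degrees of \emph{all} remaining vertices, so the net change to the sum is $\sigma(\widehat\pi\s t) - 2(\widehat{n_t}-1) = \sigma(\widehat\pi\s t) - 2\widehat{n_t} + 2$, and since the resulting sequence $\widecheck\pi\s t$ has $\widehat{n_t}-1$ terms, the \emph{per-term average} drops by only a bounded amount — roughly $2$ — not by a factor proportional to $n_t$. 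Making this rigorous requires writing the bound in the form "sum $\ge$ (coefficient)$\times$(length)" consistently at every stage and checking that passing from length $\widehat{n_t}$ to length $\widehat{n_t}-1$ while subtracting $2(\widehat{n_t}-1)$ from the sum changes the coefficient by exactly $2$ plus lower-order terms; combined with the $O(\delta)n_t$ losses from Steps~\ref{Alg:deletesmall} and~\ref{Alg:itKW}, and the fact that $n_t \ge n_{t+1} \ge (1-o(1))n$ so all these error terms are comparable, this yields the stated recursion and hence the claim by induction.
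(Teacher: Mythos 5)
Your plan is correct and follows essentially the same route as the paper's proof: induction on $t$, with the deletion of the dominating vertex in Step~\ref{Alg:deletedom} accounting for the $-2$ increment in the per-term coefficient, the constant loss from Step~\ref{Alg:deletesmall} absorbed into the extra $\delta$, and the Step~\ref{Alg:itKW} layoffs handled by Lemma~\ref{lemma:iterated_KW}. The only slip is that the relevant part of that lemma is (i), not (ii): since every laid-off term is below half the current coefficient, the coefficient is preserved outright, so there is no additional cost to absorb from that step.
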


\begin{proof}[Proof of Claim \ref{Cl:SumIter}]
We proceed by induction on $t$. 
For $t=0$, the claim follows from Lemma~\ref{lemma:iterated_KW}, as observed in Equation~(\ref{eq:pi0}).

Let $t \ge 1$ and assume that $\sigma(\pi\s{t-1}) \geq (2(k-i^*)+\nabla_{i^*}(H) -1- ((t-1)+1)\delta -2(t-1))n_{t-1}$. 
Observations (a) and (b) above show that Step~\ref{Alg:deletesmall} deletes at most $\binom{k}{\FL{k/2}}(8k^2)$ vertices each of degree at most $k-2$.
Therefore $\sigma(\widehat \pi \s {t-1})\ge \sigma(\pi\s{t-1})-\binom{k}{\FL{k/2}}(16k^3)$.
To obtain $\widecheck \pi\s {t-1}$ from $\widehat \pi \s {t-1}$, we lay off the maximum term, which is at most $n_{t-1}-1$.
Therefore $\sigma(\widecheck\pi\s {t-1})\ge \sigma(\pi\s{t-1})-\binom{k}{\FL{k/2}}(16k^3)-2(n_{t-1}-1)$.
Let $\widecheck n_{t-1}$ denote the number of terms in $\widecheck \pi\s {t-1}$.
Observe that $n_{t-1}-\binom{k}{\FL{k/2}}(8k^2)-1\le\widecheck n_{t-1}< n_{t-1}$.
Since $\widecheck n_{t-1}$ grows with $n_{t-1}$ while $k$ and $\delta$ are fixed, if $\widecheck n_{t-1}$ is sufficiently large, then $\delta\widecheck n_{t-1}>\binom{k}{\FL{k/2}}(16k^3)$.
Therefore
\begin{align*}
\sigma(\widecheck\pi\s {t-1})&\ge \sigma(\pi^{(t-1)})-2(n_{t-1}-1)-\binom{k}{\FL{k/2}}(16k^3)\\
&\ge (2(k-i^*)+\nabla_{i^*}(H) -1- t\delta -2(t-1))n_{t-1}-2(n_{t-1}-1) -\delta\widecheck n_{t-1}\\
&\ge (2(k-i^*)+\nabla_{i^*}(H) -1- t\delta -2t)n_{t-1} -\delta\widecheck n_{t-1}\\
&\ge (2(k-i^*)+\nabla_{i^*}(H) -1- t\delta -2t)\widecheck n_{t-1} -\delta\widecheck n_{t-1}\\
%&\ge (2(k-i^*)+\nabla_{i^*}(H) -1- t\delta -2t)\widecheck n_{t-1} -\delta\widecheck n_{t-1}\\
&\ge (2(k-i^*)+\nabla_{i^*}(H) -1- (t+1)\delta -2t)\widecheck n_{t-1}.
\end{align*}

Finally, to obtain $\pi\s{t}$ from $\widecheck \pi\s {t-1}$, we iteratively lay off terms, each of which is at most $k-i^*+\frac{\nabla_{i^*}(H)-1-(t+1)\delta}{2}-t$.
Let $j$ denote the number of these terms that are laid off to obtain $\pi_t$; note that $n_t=\widecheck n_{t-1}-j$.
By Lemma~\ref{lemma:iterated_KW}, it follows that
\begin{equation*}\sigma(\pi_t) \geq (2(k-i^*)+\nabla_{i^*}(H)-1- (t+1)\delta -2t)n_{t}.\qedhere
\end{equation*}
\end{proof}
 
Next, we show that it is valid to declare that $\pi$ is potentially $H$-graphic if too many terms are laid off in Step~\ref{Alg:itKW}.

\begin{claim}\label{claim:step3}
If at least $\frac{(t+3)\delta}{1-k\delta}n_t$ terms are laid off from $\widecheck \pi\s t$ to obtain $\pi\s {t+1}$ in Step~\ref{Alg:itKW} of the algorithm, then $\pi$ is potentially $H$-graphic.
\end{claim}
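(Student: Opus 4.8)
The plan is to show that when this many terms are laid off, the resulting sequence (which we still denote $\pi\s{t+1}$, even though the algorithm halts rather than formally assigning it) is dense enough, relative to its much smaller length, to be potentially $F$-graphic for a well-chosen $F\in\mathcal{D}\s{t+1}(H)$ of small $\wtilsig$; Claim~\ref{Cl:graphbuild} then upgrades this to ``$\pi$ is potentially $H$-graphic''. The idea is simply that the terms removed in Step~\ref{Alg:itKW} are small, so laying off many of them barely lowers the sum while it does lower the length, and hence drives up the ratio $\sigma(\pi\s{t+1})/|\pi\s{t+1}|$.

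Concretely, I would set $m_t := 2(k-i^*)+\nabla_{i^*}-1-(t+1)\delta-2(t+1)$ and first observe that the target value in Step~\ref{Alg:itKW} is exactly $\CL{m_t/2}$ (the additive integer part $(k-i^*)-(t+1)$ pulls out of the ceiling), so each laid-off term is at most $\CL{m_t/2}-1$. Next I would combine (i) the bound $\sigma(\widecheck\pi\s t)\ge(m_t-\delta)\widecheck n_t$, which is precisely the intermediate estimate on $\sigma(\widecheck\pi\s{t-1})$ appearing in the proof of Claim~\ref{Cl:SumIter} after reindexing $t-1\mapsto t$, with (ii) the fact that laying off a term drops the sum by twice that term, exactly as in Lemma~\ref{lemma:iterated_KW}. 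Writing $j$ for the number of terms laid off and $n_{t+1}=\widecheck n_t-j$, this gives
\[
\sigma(\pi\s{t+1})\ \ge\ (m_t-\delta)n_{t+1}+\bigl(m_t-2\CL{m_t/2}+2-\delta\bigr)j\ \ge\ (m_t-\delta)n_{t+1}+(1-(t+2)\delta)j,
\]
where the last inequality uses the elementary fact $m_t-2\CL{m_t/2}+2\ge 1-(t+1)\delta$ (check the two parities of the integer $m_t+(t+1)\delta$). Now I would plug in the hypothesis $j\ge\frac{(t+3)\delta}{1-k\delta}n_t\ge\frac{(t+3)\delta}{1-k\delta}n_{t+1}$ together with $1-(t+2)\delta\ge 1-k\delta$ (valid since $t+1\le k-\alpha-1$, so $t+2\le k$): the second term is then at least $(t+3)\delta\,n_{t+1}$, and therefore
\[
\sigma(\pi\s{t+1})\ \ge\ \bigl(m_t+(t+2)\delta\bigr)n_{t+1}\ =\ \bigl(\wtilsig(H)-2(t+1)+\delta\bigr)n_{t+1},
\]
using $\wtilsig(H)=2(k-i^*)+\nabla_{i^*}-1$.

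To finish, note the algorithm produces $\pi\s{t+1}$ only for $t+1\le k-\alpha-b_H$. If $t+1=k-\alpha$ (which forces $b_H=0$), then $\overline K_\alpha\in\mathcal D\s{k-\alpha}(H)$ and $\pi\s{t+1}$ — having at least $\alpha$ terms for $n$ large — is trivially potentially $\overline K_\alpha$-graphic, so Claim~\ref{Cl:graphbuild} already yields that $\pi$ is potentially $H$-graphic. Otherwise $t+1<k-\alpha$, so Lemma~\ref{PotDel2} supplies $F\in\mathcal D\s{t+1}(H)$ with $\wtilsig(F)\le\wtilsig(H)-2(t+1)$; by Theorem~\ref{FLMW}, for $n$ (hence $n_{t+1}$) sufficiently large $\sigma(F,n_{t+1})\le(\wtilsig(F)+\delta)n_{t+1}\le(\wtilsig(H)-2(t+1)+\delta)n_{t+1}\le\sigma(\pi\s{t+1})$, so $\pi\s{t+1}$ is potentially $F$-graphic, hence potentially $\mathcal D\s{t+1}(H)$-graphic, and Claim~\ref{Cl:graphbuild} completes the argument. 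I expect the only real care to be the $\delta$-bookkeeping: matching the Step~\ref{Alg:itKW} target with $\CL{m_t/2}$, importing the correct intermediate bound from the proof of Claim~\ref{Cl:SumIter}, and confirming that the threshold $\frac{(t+3)\delta}{1-k\delta}$ is exactly what makes a positive slack ($+\delta$) survive in the final inequality; everything after that is an immediate appeal to Lemma~\ref{PotDel2}, Theorem~\ref{FLMW}, and Claim~\ref{Cl:graphbuild}.
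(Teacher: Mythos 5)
Your proof is correct and follows essentially the same route as the paper's: the reindexed intermediate bound on $\sigma(\widecheck\pi\s{t})$ from the proof of Claim~\ref{Cl:SumIter}, the accounting for laid-off terms via Lemma~\ref{lemma:iterated_KW}, the conversion of the hypothesis on $j$ into a $+\delta n_{t+1}$ slack, and the appeal to Lemma~\ref{PotDel2}, Theorem~\ref{FLMW}, and Claim~\ref{Cl:graphbuild}. Your separate treatment of the boundary case $t+1=k-\alpha$ (where Lemma~\ref{PotDel2}'s hypothesis $t+1<k-\alpha(H)$ fails and one falls back on $\overline{K}_{\alpha}$) is a small refinement the paper elides, but it does not change the argument.
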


\noindent\textit{Proof of Claim \ref{claim:step3}.}  As shown in the proof of Claim~\ref{Cl:SumIter}, 
$$\sigma(\widecheck\pi\s{t}) \geq (2(k-i^*)+\nabla_{i^*}(H)-1-(t+2)\delta -2(t+1))\widecheck n_t.$$
Let $j$ denote the number of terms that are laid off from $\widecheck \pi_t$ to obtain $\pi\s {t+1}$, and assume that $j\ge \frac{(t+3)\delta}{1-k\delta}n_t$.
Recall that each of those terms is at most $k-i^*+\frac{\nabla_{i^*}(H)-1-(t+2)\delta}{2}-(t+1)$.
Since $\nabla_{i^*}$ is an integer and $\delta<\frac{1}{t+2}$, it follows from Lemma~\ref{lemma:iterated_KW} that
\[
\sigma(\pi\s{t+1})\ge [2(k-i^*)+\nabla_{i^*}(H)-1- (t+2)\delta -2(t+1)](\widecheck n_t-j) + (1-(t+2)\delta)j.
\]
Recall that $t< k-\alpha-b_H$, or else the algorithm would have terminated, and therefore $k\ge t+2$.
Thus 
\begin{align*}
j&\ge \frac{(t+3)\delta}{1-k\delta}n_t\\
&\ge \frac{(t+3)\delta}{1-k\delta}(\widecheck n_t-j)\\
&\ge \frac{(t+3)\delta}{1-(t+2)\delta}(\widecheck n_t-j)\\
\end{align*}

Consequently, if $\widecheck n_t$ is sufficiently large, we have that,
\begin{align*}
\sigma(\pi\s{t+1})&\ge [2(k-i^*)+\nabla_{i^*}(H)-1- (t+2)\delta -2(t+1)](\widecheck n_t-j)+(t+3)\delta(\widecheck n_t-j)\\
&\ge[2(k-i^*)+\nabla_{i^*}(H)-1](\widecheck n_t-j)-2(t+1)(\widecheck n_t-j)+\delta(\widecheck n_t-j)\\
&=\sigma(H,\widecheck n_t-j)-2(t+1)(\widecheck n_t-j)+\delta(\widecheck n_t-j).
\end{align*}
By Lemma~\ref{PotDel2}, there exists some $F \in \mathcal{D}\s{t+1}(H)$ such that $\sigma(\pi\s{t+1}) \geq \sigma(F, \widecheck n_t -j)$.
Therefore $\pi\s{t+1}$ is potentially $\mathcal{D}\s{t+1}(H)$-graphic. 
By Claim \ref{Cl:graphbuild}, $\pi$ is potentially $H$-graphic. \hfill$\Box$\\

We conclude our discussion of the algorithm and the intermediate sequences generated by it by showing that it will not iterate too many times, relative to $n$.  

\begin{claim}\label{Cl:n-nl}
If the algorithm terminates in Step~\ref{Alg:halt} by generating $\pi\s\ell$, then $n-n_\ell<\frac\epsilon{8k} n$.
\end{claim}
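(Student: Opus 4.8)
The plan is to bound the total number of vertices removed over all iterations of the algorithm. Vertices leave the sequence only in three places: the initialization step, Step~\ref{Alg:deletesmall} (deleting nonneighbors of the maximum-degree vertex), Step~\ref{Alg:deletedom} (deleting the dominating vertex), and Step~\ref{Alg:itKW} (laying off small terms). I would account for each of these separately and then sum. The initialization step lays off at most $\frac{2\delta}{1+\delta}n$ terms, as was established before the algorithm began. Since the algorithm halts in Step~\ref{Alg:halt} whenever $t = k-\alpha-b_H$, there are at most $k-\alpha-b_H \le k$ iterations; in each one, observation~(a) bounds the number of vertices removed in Step~\ref{Alg:deletesmall} by $\binom{k}{\fl{k/2}}(8k^2)$, and Step~\ref{Alg:deletedom} removes exactly one vertex. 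So these two sources together contribute at most $k\bigl(\binom{k}{\fl{k/2}}(8k^2)+1\bigr)$ vertices, a constant independent of $n$, hence less than $\frac{\epsilon}{16k}n$ for $n$ large.

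The main term to control is the number of small terms laid off in Step~\ref{Alg:itKW} across all iterations. Here I would invoke Claim~\ref{claim:step3}: if in some iteration $t$ at least $\frac{(t+3)\delta}{1-k\delta}n_t$ terms are laid off in Step~\ref{Alg:itKW}, then $\pi$ is potentially $H$-graphic and the algorithm would have already concluded (so this did not happen on the branch where it terminates in Step~\ref{Alg:halt}). Therefore in every iteration $t \in \{0,\ldots,\ell-1\}$ that completes Step~\ref{Alg:itKW}, strictly fewer than $\frac{(t+3)\delta}{1-k\delta}n_t \le \frac{(k+2)\delta}{1-k\delta}n$ terms are laid off. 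Summing over at most $k$ iterations, the total number of terms laid off in Step~\ref{Alg:itKW} is at most $\frac{k(k+2)\delta}{1-k\delta}n$.

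Combining all sources, $n - n_\ell$ is at most
\[
\frac{2\delta}{1+\delta}n + k\left(\binom{k}{\fl{k/2}}(8k^2)+1\right) + \frac{k(k+2)\delta}{1-k\delta}n.
\]
It remains to check this is less than $\frac{\epsilon}{8k}n$. The constant middle term is $o(n)$, so it is at most $\frac{\epsilon}{16k}n$ for $n$ sufficiently large (absorbing into the choice of $n_0$). For the two $\delta n$ terms, using the hypothesis $\delta < \frac{\epsilon}{16k^3+48k^2+(32+\epsilon)k}$ — which in particular forces $k\delta < \frac12$, so $\frac{1}{1-k\delta} < 2$ and $\frac{1}{1+\delta} < 1$ — we get $\frac{2\delta}{1+\delta}n + \frac{k(k+2)\delta}{1-k\delta}n < (2 + 2k(k+2))\delta n = (2k^2+4k+2)\delta n$, and the explicit bound on $\delta$ is exactly calibrated so that this, together with the constant term, stays below $\frac{\epsilon}{8k}n$. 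The only mild obstacle is bookkeeping: making sure the $\delta$-bound was chosen with enough room to absorb all three contributions simultaneously, which is precisely why the denominator in the definition of $\delta$ carries the somewhat baroque constant $16k^3+48k^2+(32+\epsilon)k$.
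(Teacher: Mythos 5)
Your decomposition is exactly the paper's: initialization contributes at most $\frac{2\delta}{1+\delta}n$, Steps~\ref{Alg:deletesmall} and~\ref{Alg:deletedom} contribute a constant (absorbed into $\frac{\epsilon}{16k}n$ for large $n$), and Claim~\ref{claim:step3} caps each iteration of Step~\ref{Alg:itKW} at $O(\delta n)$ laid-off terms, summed over $O(k)$ iterations. So the approach is the same; the only issue is your final numerical check. By replacing $\frac{1}{1-k\delta}$ with $2$ you arrive at the bound $(2k^2+4k+2)\delta n$, but the stated threshold $\delta<\frac{\epsilon}{16k^3+48k^2+(32+\epsilon)k}$ is calibrated precisely so that $\frac{(k^2+3k+2)\delta}{1-k\delta}<\frac{\epsilon}{16k}$ (clear denominators to see this), and it does \emph{not} guarantee $(2k^2+4k+2)\delta<\frac{\epsilon}{16k}$ --- at the threshold that quantity is roughly $\frac{(k+1)\epsilon}{8k(k+2)}$, which exceeds $\frac{\epsilon}{16k}$. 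The fix is to keep the denominator: bound $\frac{2\delta}{1+\delta}\le\frac{2\delta}{1-k\delta}$ and combine everything into $\frac{(k^2+2k+2)\delta}{1-k\delta}n\le\frac{(k^2+3k+2)\delta}{1-k\delta}n<\frac{\epsilon}{16k}n$, exactly as the paper does.
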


\begin{proof}[Proof of Claim~\ref{Cl:n-nl}]
In generating $\pi\s\ell$, Steps~\ref{Alg:deletesmall}, \ref{Alg:deletedom}, and \ref{Alg:itKW} are each iterated at most $k-\alpha-1$ times.
In each iteration of Step~\ref{Alg:deletesmall}, at most $\binom{k}{\fl{k/2}}(8k^2)$ vertices are deleted.
In each iteration of Step~\ref{Alg:deletedom}, one term is laid off. 
Finally, Claim~\ref{claim:step3} shows that in each iteration of Step~\ref{Alg:itKW}, at most $\frac{(3+k)\delta}{1-k\delta}n$ terms are laid off.
Since the initialization step lays off at most $\frac{2\delta}{1+\delta}n$ terms, we conclude that 
\begin{align*}%\label{eqdeletedvert}
n-n_\ell & \leq (k-\alpha-1)\left(\binom{k}{\fl{k/2}}(8k^2)+1+\frac{(3+k)\delta}{1-k\delta}n\right)+\frac{2\delta}{1+\delta}n \\
& \leq  \frac{\delta(k^2+3k+2)}{1-k\delta}n+k\binom{k}{\FL{k/2}}(8k^2)+k.
\end{align*}
Because $\delta<\frac{\epsilon}{16k^3+48k^2+(32+\epsilon)k}$, it follows that $\frac{\delta(k^2+3k+2)}{1-k\delta}< \frac{\epsilon}{16k}$.
Furthermore, if $n$ is sufficiently large, then $k\binom{k}{\FL{k/2}}(8k^2)+k<\frac{\epsilon}{16k}n$, so that
\begin{equation*}
n-n_\ell\le \frac{\epsilon}{8k}n.\qedhere
\end{equation*}
\end{proof}
%
%
%Now suppose that Stage 1 generates $\pi\s\ell$ from $\pi$; that is, at no point does Stage 1 declare $\pi$ to be potentially $H$ graphic.
%We would like to know how many terms have been removed from $\pi$ in the creation of $\pi\s\ell$; specifically, what is the value of $n-n_\ell$? 
%In each iteration of Step (i), at most $\binom{k}{\cl{k/2}}(8k^2)$ vertices are deleted.
%In each iteration of Step (ii), one term is laid off. 
%Finally, Claim~\ref{claim:step3} shows that in each iteration of Step (iii), at most $\frac{(3+k)\delta}{1-k\delta}n$ terms are laid off.
%Since Steps (i)-(iii) are iterated at most $k-\alpha(H)-1$ times, and we carry out the the initialization step once,
%\begin{align*}\label{eqdeletedvert2}
%n-n_\ell & \leq (k-\alpha-1)\left(\binom{k}{\FL{k/2}}(8k^2)+1+\frac{(3+k)\delta}{1-k\delta}n\right)+\frac{2\delta}{1+\delta}n \\
%& \leq  \frac{\delta(k^2+3k+2)}{1-k\delta}n+k\binom{k}{\FL{k/2}}(8k^2)+k\\
%&\leq  \frac{\delta(k^2+3k+2)}{1-k\delta}n+g(k),
%\end{align*}
%where we define $g(k)$ to be $k\binom{k}{\FL{k/2}}(8k^2)+k$.
%Note that if $\gamma>0$ is a fixed real number, then it is possible to choose $\delta$ so that for sufficiently large $n$ we have
%$$n-n_\ell<\gamma n.$$

Next, we analyze the realizations of $\pi\s\ell$ with the goal of showing that some realization of $\pi$ contains a supergraph of $H$, and therefore that $\pi$ is potentially $H$-graphic.   Recall that the algorithm halts either when it has iterated $k-\alpha-b_H$ times, or when the sequence $\pi\s{t}$ satisfies the maximum degree condition of Theorem \ref{BMDT}.

Suppose that the algorithm halts because it has iterated $k-\alpha-b_H$ times, so that $\ell=k-\alpha-b_H$.
Note that $\pi\s\ell$ is trivially $(\overline{K}_{\alpha+b_H})$-graphic, so Claim~\ref{Cl:graphbuild} implies that $\pi$ is potentially $(K_{k-\alpha-b_H} \vee \overline{K}_{\alpha+b_H})$-graphic.
If $H$ is Type 1, then $b_H=0$ and since $K_{k-\alpha} \vee \overline{K}_{\alpha}$ is a supergraph of $H$, it follows that $\pi$ is potentially $H$-graphic.
If $H$ is Type 2, then $b_H=1$ and $\pi$ is potentially $(K_{k-\alpha-1} \vee \overline{K}_{\alpha+1})$-graphic. 
Therefore $\pi$ satisfies conclusion (i) or (iii) of Lemma~\ref{Const}. 

Assume then that the algorithm stops when $t= \ell  < k-\alpha-b_H$ and $d_1^{(\ell)} < n_\ell-\binom{k}{\fl{k/2}}(8k^2)$.
Observe that $d_{n_\ell}^{(\ell)}$, the minimum term of $\pi\s{\ell}$, satisfies $d_{n_\ell}^{(\ell)}\ge k-\ell-\alpha(H)-b_H$ as the algorithm increments $t$ one more time before declaring the value of $\ell$. 
If $\pi\s\ell$ is degree-sufficient for $S_\ell = K_{k-\ell-\alpha-b_H} \vee \overline{K}_{\alpha+b_H}$, then the BMDT implies that $\pi\s\ell$ is potentially $S_\ell$-graphic. 
Since $K_\ell \vee S_\ell = K_{k-\alpha-b_H} \vee \overline{K}_{\alpha +b_H}$, it follows that $\pi$ is potentially $H$-graphic if $H$ is Type 1, and $\pi$ is potentially $(K_{k-\alpha-1} \vee \overline{K}_{\alpha+1})$-graphic if $H$ is Type 2.
Therefore, again $\pi$ satisfies conclusion (i) or (iii) of Lemma~\ref{Const}.

We therefore assume for the remainder of the proof that $\pi\s{\ell}$ is not degree-sufficient for $S_\ell$. 
Let $p = \max \{j: d_j^{(\ell)} \geq k-\ell-1\}$. 
Since $\pi\s{\ell}$ is not degree-sufficient for $S_\ell$ but the minimum term of $\pi\s{\ell}$ is at least the minimum degree of $S_\ell$, namely $k-\ell-\alpha-b_H$, we know that $p < k-\ell-\alpha-b_H$. 
Thus, $\pi\s{\ell}$ is degree-sufficient for $K_p \vee \overline{K}_{k-\ell-p}$. 

For $j \in \{\alpha+1, \ldots, k\}$, let $F_j$ be a $j$-vertex induced subgraph of $H$ such that $\Delta(F_j) = \nabla_j(H)$.
We conclude the proof of Lemma~\ref{Const} with Claims \ref{claim:degree_sufficient} and \ref{claim:superfunediting} and a final observation.  Our first claim is embedded within the proof of the Theorem 3 in \cite{FLMW}, so we omit the proof here.  

\begin{claim}\label{claim:degree_sufficient}
If $\pi\s{\ell}$ is degree-sufficient for $K_p \vee F_{k-\ell-p}$, then $\pi$ is potentially $H$-graphic.
\end{claim}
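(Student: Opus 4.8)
The plan is to reduce this claim to the Bounded Maximum Degree Theorem (Theorem~\ref{BMDT}) applied to $\pi\s{\ell}$, using Claim~\ref{Cl:graphbuild} to lift the conclusion back to $\pi$. Suppose that $\pi\s{\ell}$ is degree-sufficient for $K_p \vee F_{k-\ell-p}$, where $F_{k-\ell-p}$ is a $(k-\ell-p)$-vertex induced subgraph of $H$ with $\Delta(F_{k-\ell-p}) = \nabla_{k-\ell-p}(H)$. Set $H' = K_p \vee F_{k-\ell-p}$; this is a graph on $k-\ell$ vertices, and I want to show $\pi\s{\ell}$ is potentially $H'$-graphic. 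If I can do this, then since $F_{k-\ell-p}$ is an induced subgraph of $H$ and $K_p \vee F_{k-\ell-p}$ is therefore a subgraph of $K_p \vee (\text{the rest of } H)$ — more precisely $H' \in \mathcal{D}\s{\ell}(H)$ after adding the $p$ dominating vertices back appropriately — I would conclude $\pi\s{\ell}$ is potentially $\mathcal{D}\s{\ell}(H)$-graphic, and Claim~\ref{Cl:graphbuild} then gives that $\pi$ is potentially $H$-graphic.

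To apply the BMDT to $\pi\s{\ell}$ with target graph $H'$, I need to verify its two hypotheses. Condition~(i), degree-sufficiency, is exactly the hypothesis of the claim. Condition~(ii) requires $d_{n_\ell}\s{\ell} \ge (k-\ell) - \alpha(H')$; here I would compute $\alpha(H') = \alpha(K_p \vee F_{k-\ell-p}) = \alpha(F_{k-\ell-p})$, and since $F_{k-\ell-p}$ has maximum degree $\nabla_{k-\ell-p}(H)$ I can bound its independence number from below, giving $(k-\ell)-\alpha(H') \le (k-\ell-p) - \alpha(F_{k-\ell-p}) + p$; I then need this to be at most $d_{n_\ell}\s{\ell}$, which the algorithm's halting analysis guarantees is at least $k-\ell-\alpha(H)-b_H$. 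Comparing these requires knowing that $\alpha(F_{k-\ell-p})$ is large enough relative to $\alpha(H)$ — this uses that $k-\ell-p > \alpha(H)$ (which follows from $p < k-\ell-\alpha-b_H$, established just before the claim) together with the observation that a $(k-\ell-p)$-vertex induced subgraph of $H$ has independence number at least $(k-\ell-p)-(k-\alpha(H)) = \alpha(H) - \ell - p$, hence $(k-\ell)-\alpha(H') \le k - \alpha(H)$, a constant. Finally, condition on $d_1\s\ell$: the algorithm halted in Step~\ref{Alg:halt} precisely because $d_1\s\ell < n_\ell - \binom{k}{\fl{k/2}}(8k^2) = n_\ell - f(k)$, which is exactly the hypothesis needed for the BMDT's conclusion to apply.

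The main obstacle I anticipate is the careful bookkeeping in condition~(ii): matching the lower bound on the minimum term $d_{n_\ell}\s\ell$ coming from the algorithm (which tracks $b_H$ and the number of iterations $\ell$) against the value $(k-\ell) - \alpha(K_p \vee F_{k-\ell-p})$, which depends on the independence number of a specific induced subgraph of $H$. One has to be sure the slack is in the right direction — in particular that degree-sufficiency for $K_p \vee F_{k-\ell-p}$ rather than for $S_\ell = K_{k-\ell-\alpha-b_H}\vee\overline K_{\alpha+b_H}$ does not cost anything in the minimum-degree hypothesis. Since this is exactly the portion of the argument stated in the excerpt to be already present in the proof of Theorem~3 of~\cite{FLMW}, I would cite that source for the detailed verification and simply record the reduction to the BMDT and Claim~\ref{Cl:graphbuild} as above.
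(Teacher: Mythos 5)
Your reduction step is fine: if $\pi\s{\ell}$ is potentially $(K_p\vee F_{k-\ell-p})$-graphic, then Claim~\ref{Cl:graphbuild} gives a realization of $\pi$ containing $K_{\ell+p}\vee F_{k-\ell-p}$, which contains $H$ (the $\ell+p$ deleted vertices of $H$ map into the dominating clique). The gap is in the core step: the Bounded Maximum Degree Theorem cannot be applied as a black box to the target $K_p\vee F_{k-\ell-p}$, because its minimum-degree hypothesis fails. Condition (ii) of the BMDT for this target reads $d_{n_\ell}\s{\ell}\ge (k-\ell)-\alpha(K_p\vee F_{k-\ell-p})=(k-\ell)-\alpha(F_{k-\ell-p})\ge (k-\ell)-\alpha(H)$, since $F_{k-\ell-p}$ is an induced subgraph of $H$ and so $\alpha(F_{k-\ell-p})\le\alpha(H)$. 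But the algorithm only guarantees $d_{n_\ell}\s{\ell}\ge k-\ell-\alpha(H)-b_H$. Your chain of inequalities bounds $(k-\ell)-\alpha(H')$ \emph{above} by $k-\alpha(H)+p$ (you dropped the $+p$, but that is minor); being a constant is irrelevant — what is needed is that this quantity be at most the guaranteed minimum term, and it exceeds it whenever $\ell+p+b_H>0$. The slack is in the wrong direction, and this is not repairable bookkeeping: for every Type 2 graph ($b_H=1$) the required bound $(k-\ell)-\alpha(F_{k-\ell-p})$ strictly exceeds the guarantee, and even for Type 1 graphs it can fail because $F_{k-\ell-p}$ is chosen to minimize maximum degree, not to maximize independence number, so $\alpha(F_{k-\ell-p})<\alpha(H)$ is possible. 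Concretely, for $H=C_5$ one has $k=5$, $\alpha=2$, $b_H=1$, and with $\ell=0$, $p=1$ the target is $K_1\vee P_4$, whose BMDT minimum-degree requirement is $5-2=3$, while $\pi\s{0}$ is only guaranteed to have minimum term $k-\alpha-b_H=2$. Note that the paper itself applies the BMDT only to $S_\ell=K_{k-\ell-\alpha-b_H}\vee\overline K_{\alpha+b_H}$, precisely because $\alpha(S_\ell)=\alpha+b_H$ makes condition (ii) match the algorithm's guarantee.

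What the claim actually requires (and what is embedded in the proof of Theorem 3 of \cite{FLMW}) is the edge-exchange machinery underlying the BMDT rather than its statement: one first uses the definition of $p$ (the first $p$ terms are at least $k-\ell-1$, and $d_1\s\ell<n_\ell-\binom{k}{\fl{k/2}}(8k^2)$) together with Theorem~\ref{yin_complete_split} to obtain a realization of $\pi\s{\ell}$ containing $K_p\vee\overline K_{k-\ell-p}$ on the $k-\ell$ highest-degree vertices, and then uses degree-sufficiency for $K_p\vee F_{k-\ell-p}$ to perform $2$-switches that install the edges of $F_{k-\ell-p}$ one at a time inside the independent part; the bound on $d_1\s\ell$ guarantees enough room to carry out each switch without disturbing the edges already placed. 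If you want to keep your write-up, you should replace the appeal to the BMDT's statement with this exchange argument (or cite the specific lemma of \cite{FLMW} that performs it), since the BMDT's hypotheses are simply not met here.
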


Next we show that if the hypothesis of Claim \ref{claim:degree_sufficient} does not hold, then $\|\pi-\wtilpi_{k-\ell-p}(H,n)\|$ is small.  

\begin{claim}\label{claim:superfunediting}
If $\pi\s\ell$ is not degree-sufficient for $K_p \vee F_{k-\ell-p}$, then $\|\pi-\wtilpi_{k-\ell-p}(H,n)\| < \epsilon n$.
\end{claim}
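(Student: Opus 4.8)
The plan is to compare $\pi\s\ell$ term-by-term with an appropriate ``target'' degree sequence and then track how the small edits to $\pi$ made by the algorithm (deleting bounded-degree vertices, deleting dominating vertices, laying off small terms) translate this closeness back to $\pi$ and $\wtilpi_{k-\ell-p}(H,n)$. First I would record what the failure of the hypothesis of Claim~\ref{claim:degree_sufficient} tells us. We already know $\pi\s\ell$ is degree-sufficient for $K_p\vee\overline K_{k-\ell-p}$ but not for $S_\ell=K_{k-\ell-\alpha-b_H}\vee\overline K_{\alpha+b_H}$, and now additionally not degree-sufficient for $K_p\vee F_{k-\ell-p}$. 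Since $F_{k-\ell-p}$ has maximum degree $\nabla_{k-\ell-p}(H)$, degree-insufficiency must occur at a coordinate among positions $p+1,\dots,k-\ell$, i.e.\ $d\s\ell_{j}<p+\nabla_{k-\ell-p}(H)$ for some $j$ in that range (the clique part $d\s\ell_1,\dots,d\s\ell_p\ge k-\ell-1$ is already satisfied by the definition of $p$). Combined with the lower bound on $\sigma(\pi\s\ell)$ from Claim~\ref{Cl:SumIter} and the small maximum-degree condition $d\s\ell_1<n_\ell-\binom{k}{\fl{k/2}}(8k^2)$, this pins down the rough shape of $\pi\s\ell$: most terms must be large (close to $n_\ell-1$) to make up the sum $\widetilde\sigma(H)n_\ell-o(n_\ell)$, but there are at most $k-\ell-p-1 < k$ ``large'' positions available before the degree-deficient position, so essentially $k-\ell-p$ of the leading terms are forced near $n_\ell-1$ and the rest near the floor value.

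The key step is the following forcing argument. Using Claim~\ref{Cl:SumIter}, $\sigma(\pi\s\ell)\ge (\widetilde\sigma(H)-2\ell-(\ell+1)\delta)n_\ell$. Meanwhile, $\widetilde\sigma(H)=2(k-i^*)+\nabla_{i^*}-1$, and one checks using $i^* = k-\ell-p$ is forced (this identification is the crux: $p<k-\ell-\alpha-b_H$ forces $k-\ell-p>\alpha+b_H$, and the extremality of $i^*$ together with the degree deficiency forces equality $k-\ell-p = i^*$, so that $\nabla_{k-\ell-p}(H)=\nabla_{i^*}$). Then the ``budget'' for the sum is exactly enough to have $k-\ell-p-1$ terms equal to (nearly) $n_\ell-1$ and all remaining $n_\ell-(k-\ell-p-1)$ terms equal to $k-\ell-p-1+\nabla_{i^*}-1 = k-i^*+\nabla_{i^*}-2$, which after un-doing the $\ell$ iterations (each adding $1$ to the clique part and $1$ to everything) matches $\wtilpi_{k-\ell-p}(H,n)$. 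Any deviation from this profile by more than $\epsilon n/(\text{const}\cdot k)$ in the $\ell^1$ sense costs more than $\delta n$ in the sum (since $\delta$ was chosen with a large constant denominator relative to $\epsilon$), contradicting the lower bound. So $\|\pi\s\ell - \wtilpi'\|<\epsilon n/(2k)$ for the sequence $\wtilpi'$ obtained from $\wtilpi_{k-\ell-p}(H,n)$ by truncating to $n_\ell$ terms and subtracting $\ell$ from the first $k-\ell-p$ coordinates and $\ell$ from the rest — or something of that flavor, with small corrections from the $\delta$-slack and from $p$ possibly being strictly less than $k-\ell-\alpha-b_H$.

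Finally I would propagate this back from $\pi\s\ell$ to $\pi$. By Claim~\ref{Cl:n-nl}, $n-n_\ell<\frac{\epsilon}{8k}n$, so at most that many coordinates of $\pi$ were removed; each removed term in Step~\ref{Alg:deletesmall} had degree at most $k-2$ (observation (b)), each term laid off in Steps~\ref{Alg:deletedom} (the dominating vertices) contributes a known $n_t-1$-ish value, and each term laid off in Step~\ref{Alg:itKW} is bounded by roughly $k-i^*+\nabla_{i^*}/2$. Reversing the laying-off operations raises lower-indexed terms back up in a controlled way (each lay-off of a term of value $v$ lowered exactly $v$ other terms by $1$), and reversing the $\ell$ dominating-vertex deletions adds exactly $1$ to $k-\ell-p$ of the top coordinates and re-inserts the $\ell$ near-universal vertices. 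Tracking all of this, the total $\ell^1$ discrepancy accumulated is $O(k\cdot(n-n_\ell)) + O(k^2\binom{k}{\fl{k/2}}) < \epsilon n/2$ for $n$ large, so $\|\pi-\wtilpi_{k-\ell-p}(H,n)\|<\epsilon n$. The main obstacle I expect is the bookkeeping in this last step: carefully accounting for how the iterated Kleitman--Wang lay-offs redistribute the per-coordinate discrepancies (a lay-off is not a coordinate-wise operation, so one must argue that the total $\ell^1$ distance to the correspondingly-modified target sequence does not blow up), and confirming that the edge cases $b_H\in\{0,1\}$ and $p<k-\ell-\alpha-b_H$ versus $p=k-\ell-\alpha-b_H-1$ all land inside the $\epsilon n$ ball around a single member of $\mathcal P(H,n)$ rather than merely near $\wtilpi_{k-\ell-p}$ for some $i=k-\ell-p$ that might fail to be in $\arg\min$. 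The identification $k-\ell-p=i^*$ via the sum budget is what rules the latter out, and getting that inequality chain tight is the technical heart of the argument.
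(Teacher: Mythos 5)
Your overall strategy (a sum ``budget'' forcing the shape of the sequence, plus structural caps on how many terms can be large) is in the right spirit, but there are two genuine gaps. First, the step you yourself flag as the main obstacle --- propagating the estimate from $\pi\s\ell$ back to $\pi$ by reversing the Kleitman--Wang lay-offs and vertex deletions --- is not carried out, and it is precisely the difficulty the paper's proof is engineered to avoid. The paper sets $\eta=\pi(K_\ell\vee G\s\ell)$ for a realization $G\s\ell$ of $\pi\s\ell$; by Claim~\ref{Cl:graphbuild}, $\pi$ is potentially $(K_\ell\vee G\s\ell)$-graphic, so $\pi$ dominates $\eta$ \emph{termwise} and $\|\pi-\eta\|=\sigma(\pi)-\sigma(\eta)$ collapses to a pure sum computation (bounded by $\frac{\epsilon}{2}n$ using Claims~\ref{Cl:SumIter} and~\ref{Cl:n-nl}). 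No coordinate-by-coordinate reversal of lay-offs is ever needed; the triangle inequality through $\eta$ and $\wtilpi_{k-\ell-p}(H,n_\ell+\ell)$ finishes the job. Second, your assertion that ``any deviation from this profile \ldots in the $\ell^1$ sense costs more than $\delta n$ in the sum'' is false as stated: deviations above and below the target can cancel in the sum while accumulating in $\ell^1$. The argument only works because deviations \emph{above} the target are confined to the at most $k$ positions $p+\ell+1,\dots,k-1$ and are each at most $k-2$ (by the definition of $p$ and the failure of degree-sufficiency for $K_p\vee F_{k-\ell-p}$), so that $\|\eta-\wtilpi_{k-\ell-p}\|\le|\sigma(\eta)-\sigma(\wtilpi_{k-\ell-p})|+2k^2$; this one-sidedness must be stated and used explicitly.

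Separately, the identification $k-\ell-p=i^*$ that you call the crux is neither needed nor justified. The proof of the claim only uses the inequality $2(k-i^*)+\nabla_{i^*}\ge 2(\ell+p)+\nabla_{k-\ell-p}$, which is automatic from the definition of $\widetilde\sigma$ as a maximum once one notes $k-\ell-p\ge\alpha+1$ (since $p<k-\ell-\alpha-b_H$). That $\wtilpi_{k-\ell-p}(H,n)$ actually lies in $\mathcal P(H,n)$ is established only \emph{after} the claim, by comparing $\sigma(\wtilpi_{k-\ell-p}(H,n))$ to $\sigma(H,n)$ via the already-proved bound $\|\pi-\wtilpi_{k-\ell-p}(H,n)\|<\epsilon n$ and the integrality of the leading coefficients; and even then one gets membership in the argmin, not that $k-\ell-p$ equals the smallest such index $i^*$.
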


\begin{proof}[Proof of Claim \ref{claim:superfunediting}]
Let $\eta=(a_1,\ldots,a_{n_\ell+\ell})$ be the degree sequence of $K_\ell \vee G\s\ell$, where $G\s\ell$ is a realization of $\pi\s{\ell}$.  We compute upper bounds for 
\begin{itemize}
\item $\| \pi - \eta\|$, 
\item $\|\eta - \wtilpi_{k-\ell-p}(H, n_\ell + \ell)\|$, and 
\item $\| \wtilpi_{k-\ell-p}(H, n_\ell+\ell) - \wtilpi_{k-\ell-p}(H,n)\|$, 
\end{itemize}
and apply the triangle inequality to show that $\|\pi-\wtilpi_{k-\ell-p}(H,n)\|< \epsilon n$.

First we bound $\| \pi - \eta \|$.
By Claim~\ref{Cl:graphbuild}, $\pi$ is potentially $(K_\ell \vee G\s\ell)$-graphic, and hence $d_j\ge a_j$ for all $j\in\{1,\ldots,n\}$.
Therefore $\| \pi - \eta \|=\sum_{j=1}^nd_j-a_j=\sigma(\pi)-\sigma(\eta)$.
From the assumptions of Lemma~\ref{Const}, we know that $\sigma(\pi)\le \sigma(H,n)$.
Hence by Theorem~\ref{FLMW}, for $n$ sufficiently large,
$$\sigma(\pi)< (2(k-i^*)+\nabla_{i^*}-1)n+\delta n.$$

Now we seek a lower bound on $\sigma(\eta)$.
By Claim~\ref{Cl:SumIter}, we know that $\sigma(\pi\s{\ell}) \ge (2(k-i^*)+\nabla_{i^*}-1-(\ell+1)\delta -2\ell)n_\ell$.
By the definition of $\eta$, 
\begin{align}
\sigma(\eta) & = \sigma(\pi\s{\ell}) + (n_\ell+\ell-1)\ell+\ell n_\ell\nonumber\\
& \geq (2(k-i^*)+\nabla_{i^*}-1-(\ell+1)\delta -2\ell)n_{\ell} + 2\ell n_{\ell} + \ell^2 - \ell\nonumber\\
& \geq (2(k-i^*)+\nabla_{i^*}-1-(\ell+1)\delta)n_{\ell}.\label{etalb}
\end{align}
Therefore
\begin{align*}
\| \pi - \eta \|&=\sigma(\pi)-\sigma(\eta)\\
&< (2(k-i^*)+\nabla_{i^*}-1)n+\delta n- (2(k-i^*)+\nabla_{i^*}-1-(\ell+1)\delta)n_{\ell}\\
&= (2(k-i^*)+\nabla_{i^*}-1)(n-n_\ell)+\delta n+(\ell+1)\delta n_\ell\\
&\le 3k(n-n_{\ell})+(\ell+2)\delta n.
\end{align*}
Recall from Claim~\ref{Cl:n-nl} that
$$n-n_\ell<\frac{\epsilon}{8k}n.$$
Since $\delta<\frac{\epsilon}{8(\ell+2)}$, we conclude that
\begin{equation}\label{eq:pi-eta}
\| \pi - \eta \|< \frac{4\epsilon}{8}n = \frac{\epsilon}{2}n.
\end{equation}

Next we bound $\|\eta - \wtilpi_{k-\ell-p}(H,n_\ell + \ell)\|$.  We first observe that
\begin{align*}
\sigma(\wtilpi_{k-\ell-p}(H,n_\ell + \ell))&=(2(\ell+p)+\nabla_{k-\ell-p}-1)(n_\ell+\ell)-(\ell+p)(\ell+p+\nabla_{k-\ell-p}).
%&=\widetilde\sigma_{k-\ell-p}(H)(n_\ell+\ell)-(\ell+p)(\ell+p+\nabla_{k-\ell-p}).
\end{align*}
Now we establish a lower bound on $\sigma(\eta)$.
Since $p<k-\ell-\alpha-b_H$ and $b_H$ is either 0 or 1, it follows that $k-\ell-p\ge \alpha+1$, so that $2(k-i^*)+\nabla_{i^*}\ge 2(\ell+p)+\nabla_{k-\ell-p}$.

Continuing from inequality~(\ref{etalb}) and using the fact that $\ell+p\le k$, we have
\begin{align}
\sigma(\eta) & \geq (2(k-i^*)+\nabla_{i^*}-1-(\ell+1)\delta)n_{\ell}\nonumber\\
&\ge (2(\ell+p)+\nabla_{k-\ell-p}-1)n_{\ell}-(\ell+1)\delta n_{\ell}\nonumber\\
%& \geq (2p + \nabla_{k-\ell-p} - 1 - (\ell+1)\delta)n_{\ell} + 2\ell n_{\ell} + \ell^2 - \ell\nonumber\\
&\ge \sigma(\wtilpi_{k-\ell-p}(H,n_\ell + \ell)) \nonumber\\ &\qquad\qquad- (2(\ell+p) + \nabla_{k-\ell-p} - 1)\ell+  (\ell+p)(\ell+p+\nabla_{k-\ell-p}) -(\ell+1)\delta n_{\ell}\nonumber   \\
&\ge \sigma(\wtilpi_{k-\ell-p}(H,n_\ell + \ell)) -3k^2-(\ell+1)\delta n_{\ell}.\label{etalower}
\end{align}

To establish an upper bound on $\sigma(\eta)$, we first observe that it follows from the definition of $p$ that $d_j^{(\ell)} \leq k-\ell-2$ for $j\in\{p+1,\ldots,k-\ell-1\}$.
Also, $d_j^{(\ell)} \leq \nabla_{k-\ell-p} +p-1$ for $j \geq k-\ell$ since $\pi\s\ell$ is not degree-sufficient for $K_p \vee F_{k-\ell-p}$. 
Therefore, the only terms of $\eta=(a_1, \ldots, a_{n_\ell+\ell})$ that could exceed the corresponding terms of $\wtilpi_{k-\ell-p}(H, n_{\ell}+\ell)$ are those $a_j$ for which $p+\ell+1 \leq j \leq k-1$. 
Since the largest terms of $\eta$ in that range are at most $k-2$, a term of $\eta$ can exceed its corresponding term in $\wtilpi_{k-\ell-p}(H,n_\ell+\ell)$ by at most $k-2$.
Therefore we have the following upper bound on $\sigma(\eta)$:
\begin{equation}\label{etaupper}
\sigma(\eta) < \sigma(\wtilpi_{k-\ell-p}(H,n_\ell+\ell))+k^2.
\end{equation}
From inequalities~\eqref{etalower} and~\eqref{etaupper}, it follows that
\begin{equation*}
|\sigma(\wtilpi_{k-\ell-p}(H,n_\ell+\ell))-\sigma(\eta)|\le (\ell+1)\delta n_{\ell}+3k^2.
\end{equation*}
Because $\delta<\frac{\epsilon}{32(\ell+1)}$, and $n_\ell$ is sufficiently large, we conclude that  
$$|\sigma(\wtilpi_{k-\ell-p}(H,n_\ell+\ell))-\sigma(\eta)|<\frac{\epsilon}{16}n_\ell\le\frac{\epsilon}{16}n.$$

Thus, $\|\eta - \wtilpi_{k-\ell-p}(H,n_\ell + \ell)\|$ is bounded from above by $|\sigma(\wtilpi_{k-\ell-p}(H,n_\ell+\ell))-\sigma(\eta)|$ plus twice the sum of the terms in $\eta$ that exceed their corresponding terms in $\wtilpi_{k-\ell-p}(H,n_\ell + \ell)$.
As observed in the discussion preceding inequality (\ref{etaupper}), the sum of such terms is less than $k^2$.
Since $2k^2<\frac{\epsilon}{16}n$ for sufficiently large $n$,  we conclude that
\begin{equation}\label{eq:eta-wtilpi}
\|\eta - \wtilpi_{k-\ell-p}(H,n_\ell + \ell)\| \leq |\sigma(\wtilpi_{k-\ell-p}(H, n_{\ell}+\ell))-\sigma(\eta)| + 2k^2 <\frac{\epsilon}{8}n.
\end{equation}

Finally, direct computation shows that 
\begin{align}
\| \wtilpi_{k-\ell-p}(H, n_{\ell}+\ell)-  \wtilpi_{k-\ell-p}(H, n)\| &= (n-(n_\ell +\ell))(2(\ell+p)+\nabla_{k-\ell-p}-1)\nonumber\\
&\le 3k(n-n_\ell)\nonumber\\
&< 3k\left(\frac{\epsilon}{8k}n\right) =\frac{3\epsilon}{8}n. \label{wtilpi-wtilpi}
\end{align}

The proof of Claim~\ref{claim:superfunediting} now follows from the triangle equality applied to  inequalities (\ref{eq:pi-eta}), (\ref{eq:eta-wtilpi}), and~(\ref{wtilpi-wtilpi}):
\begin{align*}
\|\pi-\wtilpi_{k-\ell-p}(H,n)\| & \leq  \|\pi-\eta\|+\|\eta - \wtilpi_{k-\ell-p}(H, n_{\ell}+\ell)\| \\
 & \qquad \qquad \qquad \qquad+ \| \wtilpi_{k-\ell-p}(H, n_{\ell}+\ell)-  \wtilpi_{k-\ell-p}(H, n)\|\\
&< \frac{\epsilon}{2}n+\frac{\epsilon}{8}n+\frac{3\epsilon}{8}n\\
&=\epsilon n.\qedhere
\end{align*}
\end{proof}

To finish the proof of Lemma~\ref{Const}, it remains to show that $\wtilpi_{k-\ell-p}(H,n)$ is in ${\mathcal P}(H,n)$ so that Claim~\ref{claim:superfunediting} implies conclusion (ii) of the lemma.
Since 
$|\sigma(\wtilpi_{k-\ell-p}(H,n))-\sigma(\pi)|\le \|\pi-\wtilpi_{k-\ell-p}(H,n)\|<\epsilon n,$
and $|\sigma(\pi)-\sigma(H,n)|<\delta n$, it follows that $|\sigma(\wtilpi_{k-\ell-p}(H,n))-\sigma(H,n)|<(\epsilon+\delta)n$.
Since $\delta<\epsilon<\frac12$ and the coefficient on the linear term of $\sigma(\wtilpi_i)$ is an integer for all $i$, this implies that that $\wtilpi_{k-\ell-p}\in{\mathcal P}(H,n).$  This completes the proof of Lemma \ref{Const}.\hfill $\Box$\\

When $H$ is Type 1, the proof of Lemma~\ref{Const} shows that we only reach conclusions~(i) or (ii)~of Lemma~\ref{Const}, implying that $H$ is $\sigma$-stable. Theorem~\ref{thm:MainLow} therefore follows. 
When $H$ is Type 2, however, Lemma~\ref{Const} may only guarantee a realization of $\pi$ containing $K_{k-\alpha(H)-1} \vee \overline{K}_{\alpha(H)+1}$.
To prove Theorem~\ref{thm:MainHigh}, we further analyze this case. 
We start with Theorem~\ref{thm:NotStable} below, which gives one condition under which a Type 2 graph is not $\sigma$-stable. 
Recall that $S_{b_1, b_2}$ is the double star with central vertices of degree $b_1+1$ and $b_2+1$.

\begin{thm}\label{thm:NotStable}
If $H$ is a graph of order $k$ and independence number $\alpha$ such that $2i^*-\nabla_{i^*}(H) = 2\alpha+1$, 
and $H \not\subseteq K_{k-\alpha-2} \vee S_{b_1, b_2}$ for any $b_1$ and $b_2$ with $b_1+b_2 = \alpha$, then $H$ is not $\sigma$-stable. 
\end{thm}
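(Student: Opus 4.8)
The plan is to falsify $\sigma$-stability with $\epsilon=\tfrac12$ by producing, for every $\delta>0$ and all large $n$, a single graphic sequence $\pi^*$ of length $n$ that is not potentially $H$-graphic, satisfies $\sigma(\pi^*)\ge\sigma(H,n)-\delta n$, and yet has $\|\pi^*-\pi'\|\ge\tfrac12 n$ for every $\pi'\in\mathcal P(H,n)$. The hypothesis $2i^*-\nabla_{i^*}=2\alpha+1$ forces $i^*=\alpha+1$ and $\nabla_{\alpha+1}(H)=1$, so $\widetilde\sigma(H)=2(k-\alpha-1)$. Assuming $k\ge\alpha+2$ (the case $k=\alpha+1$ is degenerate and handled separately), I take $\pi^*$ to be the degree sequence of $G^*:=K_{k-\alpha-2}\vee D$, where $D$ is the balanced double star on $n-k+\alpha+2$ vertices. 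This is the natural ``double-star'' companion of $\wtilpi_{\alpha+1}(H,n)$, which is the degree sequence of $K_{k-\alpha-1}\vee\overline K_{n-k+\alpha+1}$, and it generalizes the $K_3$-witness of Section~\ref{sec:main}.

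Three ingredients are essentially routine. First, a direct count gives $\sigma(\pi^*)=2(k-\alpha-1)n+O(1)=\widetilde\sigma(H)n+O(1)$, so by Theorem~\ref{FLMW}, $\sigma(H,n)-\sigma(\pi^*)=o(n)<\delta n$ once $n$ is large. Second, $\pi^*$ has a unique realization: the $k-\alpha-2$ terms equal to $n-1$ must form a dominating clique, and after deleting it the remaining sequence $(b_1+1,b_2+1,1^{b_1+b_2})$ is realized only by the double star (the two large-degree vertices are forced to be adjacent, else they would need $b_1+b_2+2>b_1+b_2$ private leaf-neighbors); hence it suffices to prove that $G^*$ is $H$-free. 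Third, every $\pi'\in\mathcal P(H,n)$ is of the form $\wtilpi_i(H,n)=((n-1)^{k-i},c_i^{\,n-k+i})$ with $\alpha+1\le i\le 2\alpha$ and $c_i=k+i-2\alpha-2\in[k-\alpha-1,k-2]$, so every term of $\pi'$ is either $n-1$ (at most $k-\alpha-1$ of them) or $O(1)$; but $\pi^*$ has exactly two terms of size $\tfrac n2+O(1)$, occupying coordinates $k-\alpha-1$ and $k-\alpha$ in nonincreasing order. In each of those two coordinates $\pi'$ has an entry that is either $n-1$ or $O(1)$, hence at distance $\ge\tfrac n3$ from $\tfrac n2$; therefore $\|\pi^*-\pi'\|\ge\tfrac23 n>\tfrac12 n$ for large $n$, uniformly in $\pi'$.

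The heart of the matter is showing $G^*$ is $H$-free, and this is where the hypothesis is used. The key claim is: $H\subseteq K_{k-\alpha-2}\vee D$ if and only if $H\subseteq K_{k-\alpha-2}\vee S_{b_1,b_2}$ for some $b_1,b_2$ with $b_1+b_2=\alpha$. Indeed, $H\subseteq K_r\vee G$ (with $r=k-\alpha-2\le k$) holds precisely when some induced subgraph $H[B]$ on $\alpha+2$ vertices embeds as a subgraph into $G$ (enlarge the set of vertices of $H$ mapped into the clique up to size exactly $r$). When $G=D$, every edge of $D$ meets one of its two centers and no vertex of $D$ is adjacent to both centers, so the image of such an embedding, and hence $H[B]$ itself, is a subgraph of an $(\alpha+2)$-vertex double star $S_{b_1,b_2}$ with $b_1+b_2=\alpha$; conversely every such $S_{b_1,b_2}$ embeds into the large balanced double star $D$. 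Composing with the clique factor, $H\subseteq K_{k-\alpha-2}\vee D$ would yield $H\subseteq K_{k-\alpha-2}\vee S_{b_1,b_2}$ for some $b_1+b_2=\alpha$, contradicting the hypothesis; so $G^*$, and therefore $\pi^*$, is not potentially $H$-graphic.

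I expect the only genuinely delicate point to be this last reduction: passing from ``$H[B]$ embeds in the huge double star $D$'' to ``$H[B]$ is a subgraph of a double star $S_{b_1,b_2}$ with $b_1+b_2=\alpha$ on the nose'', while carefully tracking how many of $D$'s two centers the embedding actually uses (zero, one, or two) and padding the leaf counts appropriately. The sum estimate, the uniqueness of the realization, and the distance bound are all short once $\pi^*$ is chosen as above.
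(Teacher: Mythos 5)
Your proposal is correct and follows essentially the same route as the paper: the witness sequence $\pi^*$ is exactly the paper's $\rho(H,n)$ (the degree sequence of $K_{k-\alpha-2}$ joined with a balanced double star), with the same three verifications of sum, non-potential-$H$-graphicness, and distance from $\mathcal{P}(H,n)$. The only cosmetic difference is in proving $G^*$ is $H$-free: you reduce to embedding an $(\alpha+2)$-vertex induced subgraph of $H$ into the double star, while the paper inspects $k$-vertex subsets and invokes $\alpha(H)$; both work equally well.
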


\begin{proof}
Consider the sequence 
\[\rho=\rho(H,n)=\left((n-1)^{k-\alpha-2}, \CL{\frac{n+k-\alpha-2}{2}},\FL{\frac{n+k-\alpha-2}{2}}, (k-\alpha-1)^{n-k+\alpha}\right),\]
where $\alpha=\alpha(H)$. 
This is the degree sequence of the graph $G=K_{k-\alpha-2} \vee S_{\CL{\frac{n-k+\alpha}{2}},\FL{\frac{n-k+\alpha}{2}}}$, and $G$ is the only realization of $\rho$.
Note that $\sigma(\rho)=2(k-\alpha-1)n-(k+\alpha)(k-\alpha-1)$. 

By assumption, $H$ is Type 2 so $\sigma(H,n)=2(k-\alpha-1)n + o(n)$.
Thus $\sigma(\rho) \geq \sigma(H,n)-\delta n$ for any $\delta$, provided that $n$ is large enough. 
Consider a set $X$ of $k$ vertices in $G$.
If $X$ contains the $k-\alpha$ vertices of highest degree, then $G[X]=K_{k-\alpha-2} \vee S_{b_1, b_2}$ for some $b_1+b_2=\alpha$.
Otherwise, $G[X]$ contains at least $\alpha+1$ of the vertices of degree $k-\alpha-1$ in $G$, which are pairwise nonadjacent, and $\alpha(G[X])\ge \alpha+1$.
In both cases we conclude that $H\not\subseteq G[X]$, so $\rho$ is not potentially $H$-graphic.

It remains to show that $\|\rho- \pi \| > \epsilon n$ for every $\pi \in \mathcal{P}(H,n)$ and some choice of $\epsilon$. 
It suffices to consider the term in position $k-\alpha-1$ in each sequence.
This term is $\CL{\frac{n+k-\alpha-2}{2}}$ in $\rho$.
In $\pi$, this term is either $n-1$ or $k-j+\nabla_j-1$.
In both cases, the difference between these terms is greater than $n/3$, for sufficiently large $n$.
Therefore, $\|\rho-\pi\|>n/3$ for all $\pi\in \mathcal P(H,n)$ when $n$ is sufficiently large.
Thus $H$ is not $\sigma$-stable.
\end{proof}

We are now ready to prove Theorem \ref{thm:MainHigh}.
\begin{proof}  Let $H$ be a graph that satisfies the hypotheses of Theorem \ref{thm:MainHigh}.
First, if $H \not\subseteq K_{k-\alpha-2} \vee S_{b_1,b_2}$ for any $b_1$ and $b_2$ with $b_1 + b_2=\alpha$, then Theorem~\ref{thm:NotStable} implies that $H$ is not $\sigma$-stable.

Now suppose that $H \subseteq K_{k-\alpha-2} \vee S_{b_1,b_2}$ for some $b_1$ and $b_2$ with $b_1 \geq b_2$ and $b_1 + b_2=\alpha$.
Let $\epsilon > 0$ be given, and let $\pi=(d_1, \ldots, d_n)$ be a nonincreasing graphic sequence that is not potentially $H$-graphic satisfying $\sigma(\pi) \geq (2(k-i^*)+\nabla_{i^*}-1-\delta)n$ for some $n\ge n_0$, where $\delta$ and $n_0$ are given by Lemma~\ref{Const}. 
It follows from Lemma \ref{Const} that either $\|\pi-\pi' \| < \epsilon n$ for some $\pi' \in \mathcal{P}(H,n)$, or $\pi$ is potentially $(K_{k-\alpha-1} \vee \overline{K}_{\alpha+1})$-graphic. 
We assume that there is an appropriate choice of $\pi$ that is potentially $(K_{k-\alpha-1} \vee \overline{K}_{\alpha+1})$-graphic, as otherwise $H$ is $\sigma$-stable. 

Let $G$ be a realization of $\pi$ on the vertex set $\{v_1, \ldots, v_n\}$ with $d(v_i) = d_i$ for $i\in\{1,\ldots, n\}$.
Let $Q=\{v_1, \ldots, v_{k-\alpha-1}\}$ and let $R=\{v_{k-\alpha}, \ldots, v_{k}\}$.
By Theorem \ref{yin_complete_split}, we may assume that $G$ contains a copy of $K_{k-\alpha-1} \vee \overline{K}_{\alpha+1}$ on $Q\cup R$ such that $Q$ is the clique of order $k-\alpha-1$ and $R$ is the independent set of order $\alpha+1$. 
If $R$ is not an independent set in $G$, then $G$ contains a copy of $H$ on the vertex set $Q\cup R$, contradicting the assumption that $\pi$ is not potentially $H$-graphic, so assume that the subgraph of $G$ induced by $R$ contains no edges.  

\begin{claim}\label{claim:maxdeg}
$d_{k-\alpha}<2k^2$.
\end{claim}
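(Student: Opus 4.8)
The plan is to argue by contradiction: suppose $d_{k-\alpha}\ge 2k^2$. We already have a realization $G$ of $\pi$ on $\{v_1,\dots,v_n\}$ in which $Q=\{v_1,\dots,v_{k-\alpha-1}\}$ is a clique completely joined to the independent set $R=\{v_{k-\alpha},\dots,v_k\}$, and $R$ induces no edges. The idea is that if $v_{k-\alpha}$ has degree at least $2k^2$, then we can perform a small edge swap that creates an edge inside $R$ without destroying the $K_{k-\alpha-1}\vee\overline K_{\alpha+1}$ structure, thereby producing a copy of $K_{k-\alpha-1}\vee(\overline K_{\alpha-1}+K_2)$. Since $H\subseteq K_{k-\alpha-2}\vee S_{b_1,b_2}\subseteq K_{k-\alpha-1}\vee(\overline K_{\alpha-1}+K_2)$ (the double star $S_{b_1,b_2}$ with $b_1+b_2=\alpha$ embeds into $\overline K_{\alpha-1}+K_2$ joined by one more clique vertex, matching degrees), this would show $\pi$ is potentially $H$-graphic, a contradiction.

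\textbf{Key steps.} First I would count neighbors of $v_{k-\alpha}$ outside $Q\cup R$: since $|Q\cup R|=k$ and $\deg(v_{k-\alpha})=d_{k-\alpha}\ge 2k^2$, the vertex $v_{k-\alpha}$ has at least $2k^2-k$ neighbors among $\{v_{k+1},\dots,v_n\}$; call one such neighbor $w$. Second, pick any other vertex $v_j\in R$, say $v_k$; it is nonadjacent to $v_{k-\alpha}$ (as $R$ is independent). Third, I want to route an edge into $R$. If $w$ is nonadjacent to $v_k$, then $\{v_{k-\alpha}w,\ v_k ?\}$ — here the clean move is a $2$-swap: find a vertex $u$ adjacent to $v_k$ but not to $v_{k-\alpha}$ (such $u$ exists because $\deg(v_k)\ge d_k\ge k-\ell-\alpha-b_H\ge 1$ and we can avoid the finitely many vertices in $Q\cup R$ and the common structure by again using that $v_{k-\alpha}$ has $\ge 2k^2$ neighbors, so a non-neighbor of $v_{k-\alpha}$ adjacent to $v_k$ can be found unless $v_k$'s entire neighborhood lies in $N(v_{k-\alpha})\cup(Q\cup R)$), and then swap edges $v_{k-\alpha}w$ and $v_k u$ for non-edges $v_{k-\alpha}v_k$ and $wu$, provided $wu\notin E(G)$. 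The point of the generous bound $2k^2$ is to guarantee enough room to choose $w$ (and, if necessary, an alternate $w'$) so that the required non-edges exist; one picks $w$ among the $\ge 2k^2-k$ private neighbors of $v_{k-\alpha}$ to avoid $N(v_k)$ — if every such $w$ were adjacent to $v_k$ then $\deg(v_k)\ge 2k^2-k$, and then the roles of $v_{k-\alpha}$ and $v_k$ can be symmetrized, or a double swap does the job. Fourth, after the swap, $Q$ is still a clique joined to $R$ (the swap only touched edges incident to $v_{k-\alpha}$ and $v_k$ on the outside, and the edge $v_{k-\alpha}v_k$ was added), and now $R$ contains the edge $v_{k-\alpha}v_k$, so $G[Q\cup R]\supseteq K_{k-\alpha-1}\vee(\overline K_{\alpha-1}+K_2)\supseteq H$.

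\textbf{Main obstacle.} The delicate part is the bookkeeping of the swap: ensuring that the two edges we delete and the two non-edges we add are genuinely edges/non-edges of $G$, and that after swapping, the complete-split structure on $Q\cup R$ is preserved (in particular that we do not accidentally delete one of the join edges $v_iv_j$ with $v_i\in Q$, $v_j\in R$, nor a clique edge of $Q$). The slack of $2k^2$, versus the at most $k$ vertices of $Q\cup R$ and the degrees of order at most $k$ of the relevant low-degree vertices, is exactly what makes all these choices simultaneously possible; I would organize the argument so that we first restrict to swaps using only vertices of $\{v_{k+1},\dots,v_n\}$ as the "far" endpoints, where none of the protected join/clique edges live, and then a short case analysis (on whether a convenient $w$ is adjacent to $v_k$ or not) closes it. I expect this case analysis, rather than any single inequality, to be where the real work is.
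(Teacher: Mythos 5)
There is a genuine gap, and it sits exactly where the hypothesis $H\subseteq K_{k-\alpha-2}\vee S_{b_1,b_2}$ has to be used. Your $2$-swap must delete a second edge $v_ku$ with $u\notin Q\cup R$, but nothing at this point prevents $d_k=k-\alpha-1$, in which case $N(v_k)=Q$ exactly: every edge at $v_k$ (and at every other vertex of $R\setminus\{v_{k-\alpha}\}$) is a join edge you must protect, so there is no removable edge $v_ku$ at all. Your fallback (``symmetrize the roles of $v_{k-\alpha}$ and $v_k$, or do a double swap'') addresses only the opposite extreme in which $v_k$ has huge degree. In fact the target you aim for, $K_{k-\alpha-1}\vee(\overline K_{\alpha-1}+K_2)$, is unreachable in general: the sequence $\rho(H,n)$ from Theorem~\ref{thm:NotStable} has $d_{k-\alpha}\approx n/2\ge 2k^2$ and its unique realization $K_{k-\alpha-2}\vee S_{b_1',b_2'}$ contains $K_{k-\alpha-1}\vee\overline K_{\alpha+1}$, yet it contains no copy of $K_{k-\alpha-1}\vee(\overline K_{\alpha-1}+K_2)$ --- the only $(\alpha+1)$-sets of common neighbors of a $(k-\alpha-1)$-clique there consist of one double-star center together with leaves of the other center, which induce no edge. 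So no sequence of exchanges can produce your target graph on that input, even though $\rho(H,n)$ \emph{is} potentially $H$-graphic (via the double star) when $H\subseteq K_{k-\alpha-2}\vee S_{b_1,b_2}$. This is precisely why the paper's exchange sacrifices a clique vertex $v_p$ (found by pigeonhole so that $|W_p|\ge b_2$), reroutes $b_2$ of the join edges $v_pv_{k-\alpha+i}$ through $v_{k-\alpha}$, and lands on $K_{k-\alpha-2}\vee S_{b_1,b_2}$ rather than on a graph with the full $(k-\alpha-1)$-clique intact.

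Two secondary points. First, your containment $K_{k-\alpha-2}\vee S_{b_1,b_2}\subseteq K_{k-\alpha-1}\vee(\overline K_{\alpha-1}+K_2)$ is false whenever $\min(b_1,b_2)\ge 2$: outside its $(k-\alpha-2)$-clique the left graph has two vertices of degree at least $k-\alpha+1$ (the two centers), while the right graph has none. The containment you actually need, $H\subseteq K_{k-\alpha-1}\vee(\overline K_{\alpha-1}+K_2)$, does hold, but for a different reason --- the hypothesis that $H[X]$ has exactly one edge gives $H\subseteq K_{V(H)\setminus X}\vee H[X]=K_{k-\alpha-1}\vee(K_2+\overline K_{\alpha-1})$ --- and this is exactly what the paper exploits in Claim~\ref{claim:fewbigdeg}, where the extra hypothesis $d_{k-\alpha+8k^4}>k-\alpha-1$ supplies what your argument is missing here, namely that \emph{two} vertices of $R$ have neighbors outside $Q$. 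Second, the bound $d_k\ge k-\ell-\alpha-b_H$ you invoke belongs to the algorithm inside Lemma~\ref{Const} and is not available in the proof of Theorem~\ref{thm:MainHigh}; all that is known about $v_k$ is $d(v_k)\ge k-\alpha-1$, which is the degenerate case above.
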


\begin{proof}[Proof of Claim \ref{claim:maxdeg}]
Suppose to the contrary that $d_{k-\alpha}\ge 2k^2$.
If $v_{k-\alpha}$ has $\alpha$ neighbors in $V(G)-(Q\cup R)$ that are all adjacent to every vertex in $Q$, then $G$ contains $K_{k-\alpha}\vee \overline K_{\alpha}$.
Hence $G$ contains $H$, contradicting the assumption that $\pi$ is not potentially $H$-graphic.
For $j \in \{1, \ldots, k-\alpha-1\}$, let $W_j$ be the set of neighbors of $v_{k-\alpha}$ that are not adjacent to $v_j$. 
Recall that $H \subseteq K_{k-\alpha-2} \vee S_{b_1, b_2}$ for some $b_1$ and $b_2$ with $b_1+b_2=\alpha$.
Since each neighbor of $v_{k-\alpha}$ outside of $Q\cup R$ is in some $W_j$ and $d_{k-\alpha}\ge 2k^2\ge (k-\alpha-1)b_2+(k-\alpha-1)+(\alpha-1)$, the pigeonhole principle implies that there is some $p\in \{1, \ldots, k-\alpha-1\}$ such that $|W_p| \geq b_2$. We will use edge exchanges to create a copy of $S_{b_1,b_2}$ on $R\cup\{v_p\}$ with centers $v_{k-\alpha}$ and $v_p$ such that every vertex in this double star is adjacent to every vertex in $Q-\{v_p\}$.  

Let $\{x_1,\ldots,x_{b_2}\}\subseteq W_p$.
For each $i\in\{1,\ldots,b_2\}$, exchange the edges $x_iv_{k-\alpha}$ and $v_p v_{k-\alpha+i}$ with the nonedges $x_i v_p$ and $v_{k-\alpha} v_{k-\alpha+i}$ (see Figure~\ref{Fig:maxdeg}).
After these edge exchanges, $v_{k-\alpha}$ is adjacent to each of $v_{k-\alpha+1}, \ldots, v_{k-\alpha+b_2}$, and $v_p$ remains adjacent to $v_{k-\alpha}$ and $v_{k-\alpha+b_2+1},\dots, v_{k}$.  
Thus there is a realization of $\pi$ containing $K_{k-\alpha-2} \vee S_{b_1, b_2}$ where $v_p$ and $v_{k-\alpha}$ are the centers of the double star and $Q-\{v_p\}$ is the clique.  This contradicts the assumption that $\pi$ is not potentially $H$-graphic.

\begin{figure}
\begin{tikzpicture}
\node at (0,-1) {$R$};
\draw (0,2.5) ellipse (2 and .5);
\node at (0,3.5) {$Q$};
\draw (0,0) ellipse (2.5 and .75);

\draw (-2.5,0)--(-2,2.5);
\draw (2.5,0)--(2,2.5);

\fill (1,2.5) \myvert;
\node at (.8,2.7) {$v_p$};
\fill (1.5,0) \myvert;
\node at (1.5,-.3) {$v_{k-\alpha}$};

\draw (0,0) ellipse (.75 and .4);

\draw [dashed] (1.5,0)--(.3,.3666);
\draw [dashed] (1.5,0)--(.3,-.3666);

\draw (1,2.5)--(.75,0);
\draw (1,2.5)--(-.7,.1436);

\draw (3.5,.5) ellipse (.4 and .75);

\draw (1.5,0)--(3.5,-.25);
\draw (1.5,0)--(3.4,1.2261);

\draw [dashed] (1,2.5)--(3.5,1.25);
\draw [dashed] (1,2.5)--(3.3,-.1495);

\node at (3.5,.5) {$W_p$};

\fill (.5,0) \myvert;
%\node at (.5,-.5) {$v_{k-\alpha+1}$};
\fill (-.5,0) \myvert;
%\node at (1.5,-.3) {$v_{k-\alpha}$};
\node at (0,0) {$\ldots$};
\end{tikzpicture}
\caption{Edge exchanges for Claim~\ref{claim:maxdeg}.}\label{Fig:maxdeg}

\end{figure}
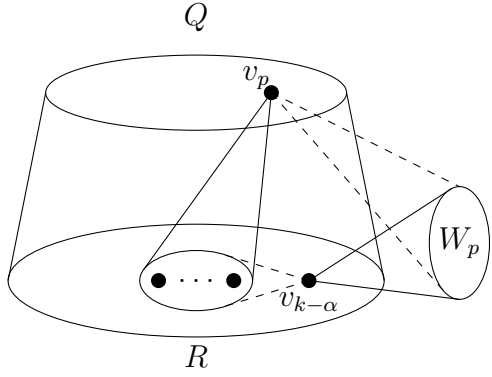
\end{proof}

We note that with some care in maximizing $$(k-\alpha-1)b_2+(k-\alpha-1)+(\alpha-1),$$ the assumption that $d_{k-\alpha}<2k^2$ could be strengthened considerably.  However, this is not needed to obtain our result, and the given bound makes some forthcoming calculations simpler.

\begin{claim}\label{claim:fewbigdeg}
$d_{k-\alpha+8k^4}\le k-\alpha-1$.
\end{claim}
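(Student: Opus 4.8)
The plan is to argue by contradiction. Suppose $d_{k-\alpha+8k^4}\ge k-\alpha$. By Claim~\ref{claim:maxdeg} every term of $\pi$ from position $k-\alpha$ on is at most $d_{k-\alpha}<2k^2$, so at least $8k^4+1$ vertices of $G$ — call them \emph{medium} — have degree in $[k-\alpha,2k^2)$; in particular every vertex of $R$ is medium and hence has at least one neighbor outside $Q$. The structural input I would use is that, since $H$ has a set $X$ of $\alpha+1$ vertices with $H[X]$ a single edge, placing $V(H)\setminus X$ into the clique shows $H\subseteq K_{k-\alpha-1}\vee(K_2\cup\overline{K}_{\alpha-1})$ (and hence also $H\subseteq K_{k-\alpha}\vee\overline{K}_\alpha$). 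Consequently, to reach a contradiction it suffices to produce a realization of $\pi$ in which the $(k-\alpha-1)$-clique $Q$ has $\alpha+1$ common neighbors that span at least one edge: such a realization contains $K_{k-\alpha-1}\vee(K_2\cup\overline{K}_{\alpha-1})$, hence $H$, contradicting that $\pi$ is not potentially $H$-graphic.

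In $G$ the common neighborhood $N_Q=\bigcap_{v\in Q}N(v)$ already contains $R$. If $N_Q$ spans an edge of $G$ we are done, so assume $G[N_Q]$ has no edges. I would then pick $r,r'\in R$ with neighbors $z,z'$ lying outside $Q$ and perform the edge exchange that replaces the edges $rz$, $r'z'$ with the non-edges $rr'$, $zz'$. This preserves $\pi$, keeps $Q$ a clique with $R$ joined to it, and places the edge $rr'$ inside $R\subseteq N_Q$ — exactly what is needed — provided only that $z\ne z'$ and $zz'\notin E(G)$. The one obstruction is that every such exchange might be blocked, which forces the spare neighbors of the vertices of $R$ to be tightly clustered: typically a single vertex $z$ joined to all of $R$, or a small clique on these spare neighbors. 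In each such configuration I would use the abundance of medium vertices instead: if a vertex $z$ joined to all of $R$ has large degree and is joined to all of $Q$, then $Q\cup\{z\}$ is a $(k-\alpha)$-clique with the $\alpha+1$ independent common neighbors $R$, giving $K_{k-\alpha}\vee\overline{K}_\alpha\supseteq H$; and if instead $z$ misses some $v_p\in Q$, then an edge exchange that moves one of $z$'s edges off of $R$ and onto $v_p$ simultaneously creates an edge among the common neighbors of $Q$ (or repairs $z$ into $N_Q$ so that the previous case applies).

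The step I expect to be the main obstacle is exactly this analysis of the blocked configurations: enumerating the ways a single edge exchange among $R$ can fail and, in each case, exhibiting either a complete split subgraph containing $H$ or a legal exchange. This is where the generous bound $8k^4$ is spent — it is far larger than any fixed polynomial in $k$ that the individual exchanges require, so the $O(k)$ vertices touched by an exchange can always be chosen from a pool of $\Theta(k^2)$ candidates of each needed type, guaranteeing distinct, pairwise nonadjacent spare neighbors and no accidental coincidences with the clique $Q$ or the $\alpha+1$ common neighbors being assembled. Once such an exchange is carried out, the resulting realization of $\pi$ contains $K_{k-\alpha-1}\vee(K_2\cup\overline{K}_{\alpha-1})$, hence $H$; this contradiction establishes that $d_{k-\alpha+8k^4}\le k-\alpha-1$.
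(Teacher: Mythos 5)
Your overall strategy is the right one and matches the paper's in spirit: assume $d_{k-\alpha+8k^4}\ge k-\alpha$, note that every vertex of $R$ then has a neighbor outside $Q$, use Claim~\ref{claim:maxdeg} to bound degrees off of $Q$, and perform an edge exchange that plants exactly one edge inside $R$, so that $Q\cup R$ carries $K_{k-\alpha-1}\vee(K_2\cup\overline K_{\alpha-1})\supseteq H$ (valid since $H[X]$ has exactly one edge). But there is a genuine gap at the heart of your argument: your basic move is the two-edge swap $rz,\,r'z'\to rr',\,zz'$, which requires $z\ne z'$ and $zz'\notin E(G)$, and you explicitly defer the analysis of the configurations in which \emph{every} such swap is blocked. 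That case analysis is not a formality. The external neighbors of the $\alpha+1$ vertices of $R$ could form, say, a clique of size up to $2k^2$ with various attachment patterns to $Q$, and your proposed fallbacks (a single $z$ dominating $R$, or ``repairing $z$ into $N_Q$'') do not enumerate these possibilities; the one subcase you do treat (a common neighbor $z$ of $R\cup Q$, giving $K_{k-\alpha}\vee\overline K_{\alpha}$) is fine, but the rest is asserted rather than proved. As written, the proof is incomplete exactly where you predicted it would be hardest.

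The paper closes this gap by never attempting the blocked swap at all: it uses a \emph{three}-edge exchange. Take $u,v\in R$ with external neighbors $a_1,a_2$ (possibly equal). Since $\Delta(G-Q)<2k^2$ by Claim~\ref{claim:maxdeg}, at most $8k^4$ vertices of $G-Q$ lie within distance $2$ of $a_1$ or $a_2$, while the contradiction hypothesis supplies at least $8k^4+1$ vertices of positive degree in $G-Q$; hence there is an edge $wx$ of $G-Q$ with $w$ at distance at least $3$ from both $a_1$ and $a_2$. Exchanging $ua_1,\,va_2,\,wx$ for $uv,\,wa_1,\,xa_2$ is then unconditionally legal (the distance condition guarantees all three target pairs are nonedges on distinct vertices), and it creates exactly one edge inside $R$ without touching $Q$. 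This removes the need for any case analysis of blocked configurations, which is precisely the part your proposal leaves open. If you want to salvage your two-swap version, you would need to either carry out the full enumeration or, more simply, adopt the third ``distant'' edge as the paper does.
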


\begin{proof}[Proof of Claim \ref{claim:fewbigdeg}]
Suppose to the contrary that $d_{k-\alpha+8k^4}> k-\alpha-1$.
Let $u,v\in R$ and note that $d(u)\ge k-\alpha$ and $d(v)\ge k-\alpha$.
Since $R$ is an independent set in $G$, there are vertices $a_1$ and $a_2$ in $V(G)-(Q\cup R)$ such that $ua_1\in E(G)$ and $va_2\in E(G)$ (it is possible that $a_1=a_2$). 
Consider the graph $G-Q$.
By Claim~\ref{claim:maxdeg}, $\Delta(G-Q)<2k^2$, and therefore there are at most $8k^4$ vertices that are distance at most $2$ from $a_1$ or $a_2$ in $G-Q$.
By assumption, there are at least $8k^4+1$ vertices of positive degree in $G-Q$, so there is a vertex $w$ with positive degree that is distance at least $3$ from both $a_1$ and $a_2$.
Letting $x$ be a neighbor of $w$, we can exchange the edges $ua_1$, $va_2$, and $wx$ for the nonedges $uv$, $w a_1$, and $xa_2$ so that $R$ induces exactly one edge (see Figure~\ref{Fig:fewbigdeg}).
This realization contains $H$, contradicting the assumption that $\pi$ is not potentially $H$-graphic.
\end{proof}

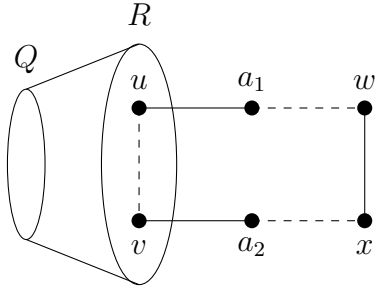
\begin{figure}
\begin{tikzpicture}
\node at (0,1.4) {$Q$};
\draw (0,0) ellipse (.25 and 1);
\node at (1.5,2) {$R$};
\draw (1.5,0) ellipse (.5 and 1.6);

\draw (0,1)--(1.5,1.6);
\draw (0,-1)--(1.5,-1.6);

\fill (1.5,.75) \myvert;
\node at (1.5,1.1) {$u$};
\fill (1.5,-.75) \myvert;
\node at (1.5,-1.1) {$v$};

\fill (3,.75) \myvert;
\node at (3,1.1) {$a_1$};
\fill (3,-.75) \myvert;
\node at (3,-1.1) {$a_2$};

\fill (4.5,.75) \myvert;
\node at (4.5,1.1) {$w$};
\fill (4.5,-.75) \myvert;
\node at (4.5,-1.1) {$x$};

\draw (1.5,.75)--(3,.75);
\draw (1.5,-.75)--(3,-.75);
\draw (4.5,.75)--(4.5,-.75);

\draw [dashed] (1.5,.75)--(1.5,-.75);
\draw [dashed] (3,.75)--(4.5,.75);
\draw [dashed] (3,-.75)--(4.5,-.75);

\end{tikzpicture}
\caption{Edge exchanges for Claim~\ref{claim:fewbigdeg}.}\label{Fig:fewbigdeg}
\end{figure}

\begin{claim}
$\|\pi-\wtilpi_{k-\alpha-1}\|<\epsilon n$.
\end{claim}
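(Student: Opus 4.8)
The plan is to show that $\pi$ is close to $\wtilpi_{k-\alpha-1}(H,n)$ by arguing that, outside of a bounded number of ``large'' terms, the degree sequence $\pi$ essentially agrees with $\wtilpi_{k-\alpha-1}(H,n) = ((n-1)^{k-\alpha-1}, (k-\alpha-1+\nabla_{k-\alpha-1}-1)^{n-k+\alpha+1})$. Since $H$ is Type 2 with an induced subgraph of order $\alpha+1$ having exactly one edge, we have $\nabla_{\alpha+1}=1$, and since $k-\alpha-1$ is the relevant index with $\wtilpi_{k-\alpha-1}\in\mathcal P(H,n)$ (note $i^*$ corresponds to $k-(k-\alpha-1)=\alpha+1$ here), the ``tail'' value $k-\alpha-1+\nabla_{k-\alpha-1}-1$ simplifies to $k-\alpha-1$. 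So $\wtilpi_{k-\alpha-1}(H,n)$ has $k-\alpha-1$ terms equal to $n-1$ and all remaining terms equal to $k-\alpha-1$. First I would record the consequences of Claims~\ref{claim:maxdeg} and~\ref{claim:fewbigdeg}: by Claim~\ref{claim:fewbigdeg}, $d_j \le k-\alpha-1$ for all $j \ge k-\alpha+8k^4$, and by Claim~\ref{claim:maxdeg}, $d_{k-\alpha} < 2k^2$. Combined with the assumption $d_{2k}\le k-2$ (hence $d_j \le k-2$ for $j \ge 2k$), this pins down the structure: only the first $k-\alpha-1$ terms can be genuinely large (close to $n-1$), the next at most $8k^4 + k$ or so terms are bounded by $2k^2$, and all terms from position $k-\alpha+8k^4$ onward are at most $k-\alpha-1$.

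Next I would use the sum lower bound. Since $\sigma(\pi) \ge \sigma(H,n) - \delta n = (2(k-\alpha-1) - \delta)n + o(n)$, and the $n - (k-\alpha-1) - O(k^4)$ ``small'' terms each contribute at most $k-\alpha-1$, while the bounded block of $O(k^4)$ intermediate terms contributes at most $O(k^6)$, the $k-\alpha-1$ leading terms must together contribute at least $(2(k-\alpha-1)-\delta)n - (k-\alpha-1)n - O(k^6) = (k-\alpha-1)n - O(\delta n)$. Since each leading term is at most $n-1$, this forces each of $d_1,\dots,d_{k-\alpha-1}$ to be at least $n - O(\delta n / (k-\alpha-1))$, i.e., each leading term differs from $n-1$ by at most $\delta' n$ for a small constant $\delta'$ depending on $\epsilon$. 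This handles the first $k-\alpha-1$ coordinates of $\|\pi - \wtilpi_{k-\alpha-1}\|$, contributing less than, say, $\epsilon n/4$ total once $\delta$ is small enough.

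Now I would bound the contribution of the remaining coordinates. The block of positions $k-\alpha, \dots, k-\alpha+8k^4$ contributes a bounded amount (at most $(8k^4+1)(2k^2 + k) = O(k^6)$), which is $o(n)$ and hence negligible. For positions $j \ge k-\alpha+8k^4$: in both $\pi$ and $\wtilpi_{k-\alpha-1}$ these terms are at most $k-\alpha-1$, but I need to control $\sum_{j \ge k-\alpha+8k^4} |d_j - (k-\alpha-1)|$, which is $\sum_j (k-\alpha-1) - d_j$ on these coordinates minus a bounded correction, i.e., essentially $(k-\alpha-1)(n-k+\alpha+1) - \sigma(\pi) + (\text{contribution of large terms}) + O(k^6)$. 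Since the large terms contribute at most $(k-\alpha-1)(n-1)$ and $\sigma(\pi)$ is at least $(2(k-\alpha-1)-\delta)n - o(n)$, this difference is at most $O(\delta n) + o(n)$, again less than $\epsilon n/4$ for $\delta$ small. Summing the three pieces via the triangle-inequality-style accounting yields $\|\pi - \wtilpi_{k-\alpha-1}\| < \epsilon n$.

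The main obstacle I anticipate is the bookkeeping in the middle: carefully partitioning the index set into the $k-\alpha-1$ large positions, the bounded ``garbage'' block of roughly $8k^4$ positions where degrees lie in $[k-\alpha, 2k^2)$, and the tail where degrees are at most $k-\alpha-1$, and then tracking how the sum constraint $\sigma(\pi) \ge \sigma(H,n)-\delta n$ distributes across these pieces. The subtle point is that $\wtilpi_{k-\alpha-1}$ is itself only defined up to a possible $-1$ adjustment of its last term for parity, and that the tail of $\pi$ could have many terms strictly below $k-\alpha-1$ — but the sum lower bound prevents that from happening to more than $O(\delta n)$ total deficiency, which is exactly what the argument needs. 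Once $\delta$ is chosen smaller than a suitable constant multiple of $\epsilon/(k^2)$ and $n$ is large enough to absorb the $O(k^6)$ additive errors, the bound follows.
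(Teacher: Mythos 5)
Your proposal is correct and takes essentially the same route as the paper: both arguments rest on Claims~\ref{claim:maxdeg} and~\ref{claim:fewbigdeg} to confine any excess of $\pi$ over $\wtilpi_{\alpha+1}(H,n)$ to the roughly $8k^4$ positions between $k-\alpha$ and $k-\alpha+8k^4-1$ (total excess $O(k^6)$), and on the hypothesis $\sigma(\pi)\ge\sigma(H,n)-\delta n$ to bound the total deficit by $O(\delta n)$. The paper compresses the final accounting into the single inequality $\|\pi-\wtilpi_{\alpha+1}(H,n)\|\le|\sigma(\wtilpi_{\alpha+1}(H,n))-\sigma(\pi)|+2\cdot(\text{total excess})\le\delta n+16k^6$, whereas you carry out the equivalent block-by-block bookkeeping; the content is the same.
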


\begin{proof}[Proof of Claim \ref{claim:fewbigdeg}]
By Claim~\ref{claim:fewbigdeg}, we know that $d_{k-\alpha+8k^4}\le k-\alpha-1$.
The only terms in $\pi$ that may exceed their corresponding terms in $\wtilpi_{\alpha+1}(H,n)$ are $d_j$ for $j\in\{k-\alpha,\ldots,k-\alpha+8k^4-1\}$.
By Claim~\ref{claim:maxdeg}, these terms in $\pi$ are bounded above by $2k^2$, so the sum of the absolute differences of these terms in $\pi$ and $\wtilpi_{\alpha+1}(H,n)$ is at most $16k^6$.
Since $\|\pi-\wtilpi_{\alpha+1}(H,n)\|$ is bounded above by $|\sigma(\wtilpi_{\alpha+1}(H,n))-\sigma(\pi)|$ plus twice the absolute difference of the terms where $\pi$ exceeds $\wtilpi_{\alpha+1}(H,n)$, it follows that
\begin{align*}
\|\pi-\wtilpi_{\alpha+1}(H,n)\|&\le|\sigma(\wtilpi_{\alpha+1}(H,n)-\sigma(\pi)|+16k^6\\
&\le \delta n+16k^6.
\end{align*}
For $n$ sufficiently large, $16k^2<\frac \epsilon 2n$.
Since $\delta<\epsilon/2$ it follows that if $\pi$ is not potentially $H$-graphic, then $\| \pi -\wtilpi_{\alpha+1}(H,n)\|<\epsilon n$.
\end{proof}

Since $\wtilpi_{\alpha+1}(H,n)\in\mathcal P(H,n)$, we conclude that $H$ is $\sigma$-stable. This concludes the proof of Theorem \ref{thm:MainHigh}.
\end{proof}

\section{Weak $\sigma$-stability}\label{sec:notstable}\label{sec:weak}

We conclude by briefly introducing a weaker version of stability that is distinct from that we have discussed throughout this paper.
As an example in the introduction, we showed that $K_3$ is not $\sigma$-stable using the degree sequence $\pi=\rho(K_3,n)=((\frac{n}{2})^2,1^{n-2})$, which is actually an instance of Theorem~\ref{thm:NotStable}.
In some sense, this example is unsatisfying since $\pi$ fails an obvious necessary condition for being potentially $K_3$-graphic: it is not degree-sufficient for $K_3$.
Indeed, for all $k\ge 3$, the sequence $\rho(K_k,n)$ defined in the proof of Theorem~\ref{thm:NotStable} fails to be degree sufficient for $K_k$.  This motivates the following notion of $\sigma$-stability.

\begin{defn} A graph $H$ is {\bf weakly $\sigma$-stable} if for any $\epsilon> 0$, there exists an $n_0=n(\epsilon, H)$ and $\delta > 0$ such that for any graphic sequence $\pi$ of length $n \geq n_0$ that is degree-sufficient for $H$ but not potentially $H$-graphic and satisfies
$$\sigma (\pi) \geq \sigma(H,n)-\delta n,$$
there is some $\pi' \in \mathcal{P}(H,n)$ such that $\|\pi-\pi'\| < \epsilon n$.
\end{defn}

Note that if $H$ satisfies the conditions of Theorem \ref{thm:NotStable} and, in addition, the sequence $\rho(H,n)$ is degree-sufficient for $H$, then $H$ is not weakly $\sigma$-stable.  For example, $C_6$, the cycle on 6 vertices, is not weakly $\sigma$-stable, because it is Type 2, but is not a subgraph of $K_1 \vee S_{(n-1)/2,(n-1)/2}$, and the sequence $\rho(C_6,n)$ is degree-sufficient for $C_6$.  On the other hand, we can show that complete graphs are weakly $\sigma$-stable even though they are not $\sigma$-stable.  

\begin{thm}\label{CliqueWeak}
For all $k \geq 3$, the complete graph $K_k$ is weakly $\sigma$-stable.
\end{thm}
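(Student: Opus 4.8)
The plan is to prove the statement by showing that its hypothesis is \emph{vacuous} once $n$ is large: for an appropriate $\delta$ and all sufficiently large $n$, there is simply no graphic sequence $\pi$ of length $n$ that is degree-sufficient for $K_k$, fails to be potentially $K_k$-graphic, and satisfies $\sigma(\pi)\ge\sigma(K_k,n)-\delta n$. Granting this, the condition defining weak $\sigma$-stability holds trivially (and recall that $\mathcal{P}(K_k,n)=\{\wtilpi_2(K_k,n)\}$ is nonempty, since $2i-\nabla_i(K_k)=i+1$ is strictly increasing in $i$), so $K_k$ is weakly $\sigma$-stable. This is precisely where $K_k$ differs from $C_6$: the natural ``extremal'' degree sequence $\rho(K_k,n)$ fails to be degree-sufficient for $K_k$, and it turns out that \emph{every} degree-sufficient sequence of near-maximum sum is already potentially $K_k$-graphic.

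To carry this out I would assume toward a contradiction that such a $\pi=(d_1,\ldots,d_n)$ exists and extract structural information from the two parts of the Yin--Li theorem (Theorem~\ref{thm:YinLi}). Degree-sufficiency gives $d_k\ge k-1$, and since $\pi$ is not potentially $K_k$-graphic, the hypothesis of Theorem~\ref{thm:YinLi}(i) must fail; as its clause $d_k\ge k-1$ already holds, there is an index $i_0\in\{1,\ldots,k-2\}$ with $d_{i_0}\le 2(k-1)-i_0-1=2k-3-i_0$. Writing $m=i_0-1$ (so $0\le m\le k-3$) and $C=2k-3-i_0=2k-4-m$, one has $C\ge k-1$, and since $\pi$ is nonincreasing, $d_j\le C$ for all $j\ge m+1$ while $d_j\le n-1$ for $j\le m$.

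The next step is to use the sum bound to force the tail of $\pi$ to be essentially constant equal to $C$. By Theorem~\ref{FLMW}, $\widetilde\sigma(K_k)=2k-4$, so for $n$ large $\sigma(K_k,n)\ge(2k-4)n-\delta n$ and hence $\sigma(\pi)\ge(2k-4)n-2\delta n$. Since $m+C=2k-4$, combining $\sum_{j\le m}d_j\le m(n-1)$ with $\sum_{j>m}d_j\le C(n-m)$ shows that $\sum_{j>m}(C-d_j)\le 2\delta n$. As every summand is nonnegative, fewer than $2\delta n$ of the terms $d_{m+1},\ldots,d_n$ lie below $C$; because $\pi$ is nonincreasing, those equal to $C$ form an initial block $d_{m+1}=\cdots=d_{m+r}=C$ with $r\ge n-m-2\delta n$, which is well above $2k$ when $n$ is large and $\delta<1/4$. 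In particular $d_{2k}=C\ge k-1\ge k-2$, and together with $d_k\ge k-1$ this lets us invoke Theorem~\ref{thm:YinLi}(ii) to conclude that $\pi$ \emph{is} potentially $K_k$-graphic, a contradiction. Hence no such $\pi$ exists, and $K_k$ is weakly $\sigma$-stable.

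The argument is short, so there is no serious obstacle; the only point requiring care is the interaction between the $o(n)$ error term in Theorem~\ref{FLMW} and the slack $\delta n$, which must be arranged so that the constant-$C$ block of the tail is long enough to reach position $2k$. Since every quantity other than $n$ is bounded in terms of $k$, it suffices to fix $\delta$ small in terms of $k$ (for instance $\delta<1/(4k)$) and then take $n$ large.
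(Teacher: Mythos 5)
Your proof is correct and follows essentially the same route as the paper: both arguments show that the hypothesis of weak $\sigma$-stability is vacuous for $K_k$ by combining degree-sufficiency ($d_k\ge k-1$) with the two parts of the Yin--Li theorem. The paper packages this as a direct upper bound $\sigma(\pi)\le 2(k-3)n+O(1)$ for any degree-sufficient sequence that is not potentially $K_k$-graphic (using the failure of both Yin--Li conditions together with the exact value of $\sigma(K_k,n)$), whereas you run the contrapositive --- using the failure of condition (i) and the sum to force a long constant tail and then invoking condition (ii) for a contradiction --- which needs only the asymptotic lower bound on $\sigma(K_k,n)$; the underlying content is the same.
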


\begin{proof}
If a nonincreasing sequence $\pi=(d_1, \ldots, d_n)$ is degree-sufficient for $K_k$, then $d_k \geq k-1$.
By Theorem \ref{thm:YinLi}, if $\pi$ is not potentially $K_k$-graphic, then $d_{2k} < k-3$.
Thus $d_j < k-3$ for each $j \geq 2k$, as well there being some $i\le k-2$ such that $d_i < 2(k-1)-i$. 
Hence if $\pi$ is not potentially $H$-graphic, then it is termwise bounded above by the sequence $((n-1)^{k-3}, (k-1)^{k+2}, (k-3)^{n-2k+1})$, so $\sigma(\pi)\le 2(k-3)n-k^2+7k-2$. 
The potential number for $K_k$ is $\sigma(K_k, n) = 2(k-2)n-k^2+3k$ (see \cite{GJL, LiSong, LiSong2, LiSongLuo}).
Thus, $\sigma(K_k, n)-\sigma(\pi)\ge 2n-4k+2$. 
Therefore among graphic sequences that are degree-sufficient for $K_k$ but not potentially $K_k$-graphic, there are none that satisfy the condition that $\sigma(\pi) \geq \sigma(K_k, n)-\delta n$ for $\delta < 2$.
Thus, $K_k$ satisfies the conditions for weak $\sigma$-stability for all $\epsilon>0$ with $n_0\ge 2k$ and $\delta=1$.
\end{proof}

Theorem \ref{CliqueWeak}, together with the above observation about $C_6$ together demonstrate that the class of $\sigma$-stable and weakly $\sigma$-stable graphs are not identical.  This suggests a potentially fruitful line of inquiry going forward.


\begin{thebibliography}{99}

\bibitem{BT}  R. Baber and J. Talbot, New Tur\'an densities for 3-graphs. {\it Electron. J. Combin.} {\bf 19} (2012), no. 2, P22, 21 pp.

\bibitem{BBSW} A. Blokhuis, A. Brouwer, T. Sz\"onyi, and Z. Weiner, On $q$-analogues and stability theorems. {\it J. Geom. } {\bf 101} (2011), no. 1-2, 31--50.

\bibitem{BFHJKW}
A.~Busch, M.~Ferrara, S.~Hartke, M.~Jacobson, H.~Kaul, and D.~West. Packing of graphic {$n$}-tuples. {\em J. Graph Theory}, {\bf 70} (1):29--39, 2012.

\bibitem{Chen} G. Chen, K. Ota. Hadwiger's Conjecture for Degree Sequences. {\it J. Combin. Theory Ser. B}, {\bf 114} (2015), 247--249.

\bibitem{ChuSey} M. Chudnovsky, P. Seymour, Rao's degree-sequence conjecture, {\it J. Combin. Theory Ser. B} {\bf 105} (2014), 44--92.

\bibitem {DFJS} J. Diemunsch, M. Ferrara, S. Jahanbekam and J. Shook, Extremal Theorems for Degree Sequence Packing and the 2-Color Discrete Tomography Problem, \textit{SIAM J. Discrete Math.} \textbf{29} (2015), 2088--2099.    

\bibitem{DvoMoh} Z. Dvo\v r\'ak and B. Mohar, Chromatic number and complete graph substructures for degree sequences. {\it Combinatorica} {\bf 33} (2013), no. 5, 513--529. 

\bibitem{Erdstab} P. Erdos, Some recent results on extremal problems in graph theory. Results, \textit{Theory of Graphs (Internat. Sympos., Rome, 1966)} (1967), 117--123.

\bibitem{ErdGal} P. Erd\H{o}s and T. Gallai, Graphs with prescribed degrees of vertices (Hungarian). \textit{Mat. Lapok.} \textbf{11} (1960), 264--274.

\bibitem{EJL} P. Erd\H{o}s, M.S. Jacobson, and J. Lehel, Graphs realizing the same degree sequence and their respective clique numbers. \textit{Graph Theory, Combinatorics and Applications} (eds. Alavi, Chartrand, Oellerman and Schwenk), Vol. 1, 1991, 439--449.

\bibitem{ES} P. Erd\H{o}s and A. Stone, On the structure of linear graphs. {\it Bull. Amer. Math. Soc.} {\bf 52} (1946), 1087--1091.

\bibitem{ESS} P. Erd\H{o}s and M. Simonovits, A limit theorem in graph theory. {\it Studia Sci. Math. Hungar.} {\bf 1} (1966), 51--57.

\bibitem{FLMW} M. Ferrara, T. LeSaulnier, C. Moffatt, P. Wenger. On the sum necessary to ensure that a degree sequence is potentially $H$-graphic, \textit{Combinatorica} \textbf{36} (2016), 687--702.

\bibitem{FHM} D.~Fulkerson, A.~Hoffman, and M.~McAndrew. Some properties of graphs with multiple edges. {\em Canad. J. Math.}, \textbf{17} (1965), 166--177.

\bibitem{GJL} R.J. Gould, M.S. Jacobson, J. Lehel, Potentially $G$-graphical degree sequences. In: Y. Alavi et al. (Ed.), \textit{Combinatorics, Graph Theory, and Algorithms}, Vol. 1, New Issues Press, Kalamazoo, Michigan, 1999, 451--460.

\bibitem{GRSS} A. Gy\'arf\'as, M. Ruszink\'o, G.N. S\'ark\"ozy, E. Szemer\'edi, Three-color Ramsey numbers for paths. {\it Combinatorica} {\bf 27} (2007) 35--69.

\bibitem{GSS} A. Gy\'arf\'as, G. S\'ark\"ozy, E. Szemer\'edi, Stability of the path-path Ramsey number. {\it Discrete Math.} {\bf 309} (2009), 4590--4595.

\bibitem{hak} S.L. Hakimi, On the realizability of a set of integers as
degrees of vertices of a graph, {\it J. SIAM Appl. Math}, {\bf 10} (1962), 496--506.

\bibitem{HarSea} S. Hartke and T. Seacrest, Graphic sequences have realizations containing bisections of large degree. {\it J. Graph Theory} {\bf 71} (2012), 386--401. 

\bibitem{hav} V. Havel,  A remark on the existence of finite graphs (Czech.),
{\it $\check{C}asopis~P\check{e}st.~Mat.$} {\bf 80} (1955), 477--480.

\bibitem{Kee} P. Keevash, Shadows and intersections: stability and new proofs. {\it Adv. Math.} {\bf 218} (2008), 1685--1703.

\bibitem{KleitWang} D. Kleitman and D. Wang, Algorithms for constructing graphs and digraphs with given valences and factors. \textit{Discrete Math.} \textbf{6} (1973) 79--88.

\bibitem{LiSong} J. Li and Z. Song, An extremal problem on the potentially $P_k$-graphic sequences. The International Symposium on Combinatorics and Applications (W.Y.C. Chen et. al., eds.), Tanjin, Nankai University 1996, 269--276.

\bibitem{LiSong2}  J. Li and Z. Song, The smallest degree sum
that yields potentially $P_k$-graphical sequences. {\it J. Graph Theory}
{\bf 29} (1998), 63--72.

\bibitem{LiSongLuo}  J. Li, Z. Song, and R. Luo, The
Erd\H{o}s-Jacobson-Lehel conjecture on potentially $P_k$-graphic sequences is true. {\it Science in China, Ser. A},
{\bf 41} (1998), 510--520.

\bibitem{Mantel}
W.\ Mantel,
Problem 28, soln. by H.\ Gouwentak, W.\ Mantel, J.\ Teixeira de Mattes, F.\ Schuh and W.A.\ Wythoff,
{\it Wiskundige Opgaven} {\bf 10} (1907) 60--61.

\bibitem{Mub1} D. Mubayi, A hypergraph extension of Tur\'an's theorem. {\it J. Combin. Theory Ser. B} {\bf 96} (2006), no. 1, 122--134. 

\bibitem{Mub2} D. Mubayi, Structure and stability of triangle-free set systems. {\it Trans. Amer. Math. Soc.} {\bf 359} (2007), 275--291.

\bibitem{MubPik} D. Mubayi and O. Pikhurko, A new generalization of Mantel's theorem to k-graphs. {\it J. Combin. Theory Ser. B} {\bf 97} (2007), 669--678. 

\bibitem{Nik}  V. Nikiforov, Stability for large forbidden subgraphs. {\it J. Graph Theory} {\bf 62} (2009), no. 4, 362--368.

\bibitem{NS} V. Nikiforov, R.H. Schelp, Cycles and stability. {\it J. Combin. Theory Ser. B} {\bf 98} (2008) 69--84.

\bibitem{Pik} O. Pikhurko, An analytic approach to stability. {\it Discrete Math.} {\bf 310} (2010), no. 21, 2951--2964.

\bibitem{SRao2} S.B. Rao, Towards a theory of forcibly hereditary $P$-graphic sequences. In {\it Combinatorics and Graph Theory, Proc. Int. Conf., (Calcutta 1980)}, pp. 441--458, {\it Lecture Notes in Math., 885}, Springer, Berlin-New York, 1981. 

\bibitem{Sim} M. Simonovits, A method for solving extremal problems in graph theory, stability problems. In: {\it Theory of Graphs, Proc. Colloq., Tihany}, 1966, Academic Press, New York, 1968, pp. 279--319. 

\bibitem{Turan}
P. Tur\' an,
Eine Extremalaufgabe aus der Graphentheorie,
{\it Mat. Fiz. Lapok} {\bf 48} (1941), 436--452.

\bibitem{Yin} J.H. Yin, A Rao-type characterization for a sequence to have a realization containing a split graph. \textit{Discrete Math.} \textbf{311} (2011), 2485--2489.

\bibitem{YinLi} J.H. Yin and J.S. Li, Two sufficient conditions for a graphic sequence to have a realization with prescribed clique size. {\it Discrete Math.} {\bf 301} (2005), 218--227. 

\bibitem{YinLi2} J. H. Yin and J.S. Li, A variation of a conjecture due to Erd\H os and S\'os. {\it Acta Math. Sin. Engl. Ser.} {\bf 25} (2009), 795--802.

\end{thebibliography}
\end{document}